\documentclass[11pt]{amsart}

\usepackage{amsmath,amssymb,latexsym,amsthm,newlfont,enumerate}

\usepackage{graphicx}
\usepackage[all]{xy}
\makeindex

\theoremstyle{plain}

\newtheorem{thm}{Theorem}[section]

\newtheorem{lem}[thm]{Lemma}

\newtheorem{cla}[thm]{Claim}
\newtheorem{cor}[thm]{Corollary}

\theoremstyle{definition}

\newtheorem{dfn}[thm]{Definition}

\newtheorem{rem}[thm]{Remark}

\newtheorem{exa}[thm]{Example}

\theoremstyle{remark}


\newcommand{\Z}{\mathbb{Z}}
\newcommand{\N}{\mathbb{N}}
\newcommand{\C}{\mathbb{C}}
\newcommand{\R}{\mathbb{R}}
\newcommand{\Q}{\mathbb{Q}}
\newcommand{\PS}{\mathbb{P}}

\newcommand{\OO}{\mathcal{O}}

\newcommand{\vphi}{\varphi}

\newcommand{\dashto}{\dashrightarrow}
\newcommand{\lto}{\longrightarrow}
\newcommand{\mcal}{\mathcal}


\DeclareMathOperator{\Bs}{Bs}

\DeclareMathOperator{\mult}{mult}

\DeclareMathOperator{\Supp}{Supp}

\DeclareMathOperator{\Fix}{Fix}


\DeclareMathOperator{\Proj}{Proj}



\DeclareMathOperator{\Mob}{Mob}
\DeclareMathOperator{\Pic}{Pic} 

\DeclareMathOperator{\Div}{Div}

\DeclareMathOperator{\Nef}{Nef}
\DeclareMathOperator{\Amp}{Amp}
\DeclareMathOperator{\Effb}{\overline{\mathrm{Eff}}}
\DeclareMathOperator{\Eff}{\mathrm{Eff}}
\DeclareMathOperator{\Mov}{Mov}
\DeclareMathOperator{\B}{Big}

\DeclareMathOperator{\ddiv}{div}
\DeclareMathOperator{\sB}{\mathbf{B}}

\hoffset -1in
\oddsidemargin 3cm
\evensidemargin 3cm
\textwidth 15cm


\begin{document}
\title[Finite generation and geography of models]{Finite generation and geography of models}

\author{Anne-Sophie Kaloghiros}
\address{Department of Mathematics, Imperial College London, 180 
Queen's Gate, London SW7 2AZ, UK}
\email{a.kaloghiros@imperial.ac.uk}
\author{Alex K\"{u}ronya}
\address{Mathematisches Institut, Albert-Ludwigs-Universit\"at Freiburg, Eckerstra\ss e 1, 79104 Freiburg, Germany}
\address{Mathematical Institute, Budapest University of Technology and Economics, P.O. Box 91, 1521 Budapest, Hungary}
\email{alex.kueronya@math.uni-freiburg.de}
\author{Vladimir Lazi\'c}
\address{Mathematisches Institut, Universit\"at Bayreuth, 95440 Bayreuth, Germany}
\email{vladimir.lazic@uni-bayreuth.de}

\dedicatory{To Professor Shigefumi Mori on the occasion of his 60th birthday, with admiration}
\thanks{The first author was supported by the Engineering and Physical Sciences Research Council [grant number EP/H028811/1]. The second author was supported by the DFG-Forschergruppe 790 ``Classification of Algebraic Surfaces and Compact Complex Manifolds", and by the OTKA Grants 77476 and 81203 of the Hungarian Academy of Sciences. The third author was supported by the DFG-Forschergruppe 790 ``Classification of Algebraic Surfaces and Compact Complex Manifolds". We would like to thank P.~Cascini, A.-M.~Castravet, Y.~Gongyo, K.~Matsuki and the referee for many useful comments, and the following institutions for providing additional support and excellent working conditions: Albert-Ludwigs-Universit\"at Freiburg, Imperial College London, University of Illinois at Chicago.}

\begin{abstract}
There are two main examples where a version of the Minimal Model Program can, at least conjecturally, be performed successfully: the first is the classical MMP associated to the canonical divisor, and the other is Mori Dream Spaces. In this paper we formulate a framework which generalises both of these examples. Starting from divisorial rings which are finitely generated, we determine precisely when we can run the MMP, and we show why finite generation alone is not sufficient to make the MMP work.
\end{abstract}

\maketitle
\setcounter{tocdepth}{1}
\tableofcontents

\section{Introduction}

There are two classes of projective varieties whose birational geometry is particularly interesting and rich. The first family consists of varieties where the classical Minimal Model Program (MMP) can be performed successfully with the current techniques. The other class is that of Mori Dream Spaces.  We now know that, in both cases, the geometry of birational contractions from the varieties in question is entirely determined by suitable finitely generated divisorial rings. 

More precisely, let $X$ be a $\Q$-factorial projective variety that belongs to one of these two classes. Then, there are effective $\Q$-divisors $D_1,\dots,D_r$ strongly related to the geometry of $X$ such that the multigraded divisorial ring 
$$ \mathfrak R= R(X; D_1, \dots, D_r)= \bigoplus_{(n_1, \dots , n_r)\in \N^r} H^0(X, n_1D_1+\dots+n_rD_r)$$
is finitely generated. In the first case, $\mathfrak R$ is an adjoint ring; in the second, it is a Cox ring. Then, for any divisor $D$ in the span $\mathcal{S}=\sum\R_+D_i$, finite generation implies the existence of  a birational map $\vphi_D\colon X\dashto X_D$, where $\vphi_D$ is a composition of elementary surgery operations that can be fully understood. Both $X_D$ and $(\vphi_D)_* D$ have good properties: $X_D$ is projective and $\Q$-factorial, and $(\vphi_D)_*D$ is semiample. 

In addition, there is a decomposition of $\mathcal{S}= \bigcup \mathcal{S}_j$ into finitely many rational polyhedral cones, together with birational maps $\vphi_j\colon X \dashto X_j$, such that the pushforward under $\vphi_j$ of every divisor in $\mathcal{S}_j$ is a nef divisor on $X_j$. In this paper, we say that these models $\vphi_j\colon X \dashto X_j$ are \emph{optimal}; the precise definition is in Section \ref{sec:auxiliary}. By analogy with the classical case, the map $\vphi_j\colon X \dashto X_j$ is called a \emph{$D$-MMP}. After Shokurov, the decomposition of $\mathcal S$ above is called a \emph{geography} of optimal models associated to $\mathfrak R$. 

The goal of this paper is twofold. On the one hand, we want to put these two families of varieties under the same roof. That is to say, we want to identify the maximal class of varieties and divisors on them where a suitable MMP can be performed. On the other hand, we want to understand why this class is the right one, i.e.\ what the key ingredients that make the MMP work are.

Let $D$ be a $\Q$-divisor on a variety $X$ in one of the two families above. The $D$-MMP has two significant  features, which we would like to extend to a more general setting:
\begin{enumerate}
\item[(i)] all varieties in the MMP are $\Q$-factorial,
\item[(ii)] the section ring $R(X,D)$ is preserved under the operations of the MMP.
\end{enumerate}
Condition (ii) is by now well understood: contracting maps that preserve sections of $D$ are $D$-nonpositive -- we recall this definition in Section~\ref{sec:auxiliary}. Somewhat surprisingly, preserving $\Q$-factoriality is the main obstacle to extending the MMP to arbitrary varieties $X$ and divisors $D$, even when the rings $R(X,D)$ are finitely generated; this is explained in  Section \ref{sec:amplemodels}. 

In this work, we introduce the notion of \emph{gen} divisors. We say that a $\Q$-divisor $D$ on a $\Q$-factorial projective variety $X$ is gen if every $\Q$-divisor in its numerical equivalence class has a finitely generated section ring. Ample divisors are examples of gen divisors. As we explain in Section \ref{sec:amplemodels}, in the situations of interest to us, these form essentially the only source of examples: indeed, all gen divisors there come from ample divisors on the end products of some MMP. However, one should bear in mind that semiample divisors are not necessarily gen. 

The main result of this paper is the following.

\begin{thm}\label{thm:main}
Let $X$ be a projective $\Q$-factorial variety, let $D_1,\dots,D_r$ be effective $\Q$-divisors on $X$, and assume that the numerical classes of $D_i$ span $N^1(X)_\R$. Assume that the ring $R(X;D_1,\dots,D_r)$ is finitely generated, that the cone $\sum\R_+D_i$ contains an ample divisor, and that every divisor in the interior of this cone is gen. 

Then there is a finite decomposition
$$\sum\R_+D_i= \coprod \mcal{N}_i$$
into cones having the following properties:
\begin{enumerate}
\item each $\overline{\mathcal N_i}$ is a rational polyhedral cone,
\item for each $i$, there exists a $\Q$-factorial projective variety $X_i$ and a birational contraction $\vphi_i\colon X\dashto X_i$ such that $\varphi_i$ is an optimal  model for every divisor in $\mathcal N_i$.
\end{enumerate}
\end{thm}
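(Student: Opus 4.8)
The plan is to combine a single-divisor construction — \emph{every gen divisor in the cone admits an optimal model} — with a global finiteness statement coming from finite generation of the full multigraded ring $\mathfrak R = R(X;D_1,\dots,D_r)$.

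\emph{Step 1 (geography).} Since $\mathcal C := \sum\R_+D_i$ is a finite sum of rays spanned by $\Q$-divisors it is rational polyhedral, and all the hypotheses are stable under enlarging the set $\{D_1,\dots,D_r\}$, so we may refine that set freely when convenient. Finite generation of $\mathfrak R$ makes the stable base loci and the associated support functions piecewise linear along a finite rational polyhedral subdivision of $\mathcal C$ (the spanning of $N^1(X)_\R$ by the $D_i$ ensuring that this subdivision reflects the full numerical geometry of $X$); by the results of Section~\ref{sec:amplemodels} the ample model $\varphi_D\colon X\dashto\Proj R(X,D)$ then depends only on the open piece of this subdivision containing $D$. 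We thus obtain finitely many ample models $\varphi_i\colon X\dashto Y_i$, one per top-dimensional piece, each a $D$-nonpositive birational contraction with $(\varphi_i)_*D$ ample on $Y_i$ for every $D$ in that piece. This already gives conclusion (1) and all of (2) except the $\Q$-factoriality of the target.

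\emph{Step 2 (from ample models to optimal models).} Fix a top-dimensional piece, a divisor $D$ in its interior, and the ample model $\varphi\colon X\dashto Y$. We must factor $\varphi = f\circ\psi$, with $\psi\colon X\dashto X'$ a birational contraction to a $\Q$-factorial projective $X'$ and $f\colon X'\to Y$ a projective birational morphism, so that $(\psi)_*D$ is nef; this $\psi$ is then an optimal model for $D$. Choosing an ample $\Q$-divisor $A$ on $Y$ and comparing $D$ with $\varphi^{*}A$ on a common resolution exhibits the class of $D$ on $X$ as a big movable class plus an effective $\varphi$-exceptional class. Because $D$ is gen, so is every divisor numerically equivalent to a small perturbation of $D$; hence the section rings, absolute and relative over $Y$, that govern a $D$-MMP are all finitely generated, and the finite-generation-implies-MMP machinery of Sections~\ref{sec:auxiliary}--\ref{sec:amplemodels} produces the required $\psi\colon X\dashto X'$ with $X'$ $\Q$-factorial, $(\psi)_*D$ nef, and $\psi$ dominating $Y$. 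Since none of this data moves with $D$ inside the chosen piece, $\psi$ is simultaneously optimal for every divisor of that piece; the lower-dimensional pieces are absorbed into the closures of adjacent top-dimensional ones after a final (still finite, rational polyhedral) refinement, and the resulting locally closed cones, relabelled $\mathcal N_i$, satisfy $\mathcal C=\coprod\mathcal N_i$ together with (1) and (2).

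\emph{Main obstacle.} The whole weight of the argument sits in Step 2, and precisely in the $\Q$-factoriality of $X'$. Finite generation of $R(X,D)$ alone yields only the ample model $Y=\Proj R(X,D)$, which — as Section~\ref{sec:amplemodels} shows — may genuinely fail to be $\Q$-factorial, so one cannot run a $\Q$-factorial MMP from that data. The gen hypothesis supplies exactly the missing control: finite generation for a whole neighbourhood of the numerical class of $D$ lets one replace $D$ by a general member of its class, recognise each step of the induced MMP as an honest divisorial contraction or flip between $\Q$-factorial varieties, and guarantee termination. In short, the theorem reduces to the single-divisor statement that a gen divisor in $\Int\mathcal C$ has an optimal model, and Step 1 then globalises it to the asserted finite decomposition.
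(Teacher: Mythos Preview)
Your Step 1 is fine and matches the paper's Theorem \ref{thm:decomposition}. The gap is entirely in Step 2, and it is a real one: you invoke ``the finite-generation-implies-MMP machinery of Sections~\ref{sec:auxiliary}--\ref{sec:amplemodels}'' to produce a $\Q$-factorial $X'$ dominating $Y$, but no such machinery exists in those sections. Section~\ref{sec:amplemodels} only proves $\Q$-factoriality of ample models in the adjoint case (Theorem~\ref{lem:adjoint}), and that proof relies on rational singularities and \cite[12.1.4]{KM92} in a way that has no analogue under the gen hypothesis alone. Your sentence ``the section rings \ldots\ that govern a $D$-MMP are all finitely generated'' presupposes that a $D$-MMP has already been defined and shown to preserve $\Q$-factoriality, which is exactly the content of the theorem. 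The argument is circular.

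What the paper actually does is different in structure. It does not factor the ample model $X\dashto Y$ through a $\Q$-factorialisation; instead it builds the optimal model by a sequence of \emph{elementary contractions} (Theorem~\ref{lem:nullflip}), proving $\Q$-factoriality of each intermediate $X^+$ directly. The gen hypothesis enters at one precise point: if $B_1$ is a nef big divisor in the interior of the cone and $B_1'\equiv B_1$, then gen forces $R(X,B_1')$ to be finitely generated, hence $B_1'$ is semiample with the \emph{same} fibration as $B_1$ (Lemma~\ref{lem:equal proj}). This is Claim~\ref{claim:pullback}: every divisor whose numerical class lies in the supporting hyperplane $\mathcal H$ of $\Nef(X)$ is pulled back from $Y$. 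That claim is then used to show any Weil divisor on $X^+$ is $\Q$-Cartier, by writing its proper transform on $X$ as (something with class in $\mathcal H$) plus a multiple of a divisor whose pushforward is already known to be $\Q$-Cartier. Your description of how gen is used --- ``replace $D$ by a general member of its class, recognise each step \ldots\ as an honest divisorial contraction or flip'' --- does not capture this mechanism; you need to make the link from gen to $\Q$-factoriality of $X^+$ explicit, and that is the heart of the proof.
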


In fact, we prove a stronger result: we show that for any $\Q$-divisor $D\in\sum\R_+ D_i$, we can run a $D$-MMP which terminates, see Theorem~\ref{thm:scalingbig} for the precise statement. The decomposition in Theorem~\ref{thm:main} determines a geography of optimal models associated to $R(X;D_1,\dots,D_r)$. The techniques used in this paper build on and extend those from \cite{CoL10}.

Our  work has been  influenced by several lines of research. The original idea that geographies of various models are the right thing to look at is due to Shokurov \cite{Sho96}, and the first unconditional results were proved in \cite{BCHM}. Similar decompositions were considered in the context of Mori Dream Spaces by Hu and Keel \cite{HK00}, and as we demonstrate here, these are closely related to the study of asymptotic valuations in \cite{ELMNP}. Theorem \ref{thm:main} reproves and generalises some of the main results from these papers. We obtain in Corollary \ref{cor:lt models} the finiteness of models due to \cite{BCHM} by using the main theorem from \cite{CaL10}. Further, in Corollary \ref{cor:MDS} we prove a characterisation of Mori Dream Spaces in terms of the finite generation of their Cox rings due to \cite{HK00} without using GIT techniques. 

Along the way, we establish several results  of independent interest. Theorem~\ref{thm:CY}  shows that the nef and movable cones on Calabi-Yau varieties are locally rational polyhedral inside the big cone, and Lemma \ref{lem:equal proj} gives a sufficient condition for the stable base loci of numerically equivalent divisors to coincide. 

We spend a few words on the organisation of the paper. Section~\ref{sec:auxiliary} sets the notation and gathers some preliminary results. Section \ref{sec:fingen} focuses on finite generation of divisorial rings, and contains a number of results which are either of independent interest, or are used later in the paper.

In Section \ref{sec:amplemodels}, we show the existence of a decomposition $\sum\R_+D_i= \coprod \mcal{A}_i$ similar to that from Theorem~\ref{thm:main}, where all divisors in a given  chamber $\mcal A_i$ have a common \emph{ample model}, see Theorem \ref{thm:decomposition}. We study the geography of ample models. The main drawback of this decomposition is that the corresponding models are not $\Q$-factorial in general. Moreover, we show in Example \ref{exa:notgen} that the conditions of Theorem \ref{thm:decomposition} are not sufficient to ensure the existence of optimal models as in Theorem \ref{thm:main}. We explain why the presence of gen divisors is essential to the proof of Theorem \ref{thm:main}. However, we give a short proof that some of these models are indeed $\Q$-factorial in the case of adjoint divisors in Theorem \ref{lem:adjoint}.

In Section \ref{sec:DMMP}, we define what is meant by the MMP in our setting; it is easy to see that this generalises the classical MMP constructions. We then
prove Theorem \ref{thm:scalingbig}, which is a strengthening of Theorem \ref{thm:main}. The main technical result is Theorem \ref{lem:nullflip}, and the
presence of gen divisors is essential to its proof. We mention here that this reveals the philosophical role of the gen condition: it enables one to prove a
version of the classical Basepoint free theorem, which is why we can then run the Minimal Model Program and preserve $\Q$-factoriality in the process. We end
the paper with several corollaries that recover quickly some of the main results from \cite{BCHM} and \cite{HK00}.
\section{Preliminary results}\label{sec:auxiliary}

Throughout this paper we work with varieties defined over
$\mathbb C$. Unless otherwise stated, all varieties are projective and normal. We denote by $\mathbb R_+$ and $\mathbb Q_+$ the sets of non-negative real and rational numbers. 

\subsection*{Convex geometry} 

Let $\mathcal{C}\subseteq \mathbb R^N$ be a convex set. A subset $F\subseteq\mathcal C$ is a \emph{face} of $\mathcal{C}$ if it is convex, and whenever
$tu+(1-t)v\in F$ for some $u,v\in\mathcal C$ and $0<t<1$, then $u,v\in F$.  

The topological closure of a set $\mathcal S\subseteq\R^N$ is denoted by $\overline{\mcal S}$. The boundary of a closed set $\mcal C\subseteq\R^N$ is denoted by $\partial\mcal C$.

A \emph{rational polytope} in $\mathbb R^N$ is a compact set which is the convex hull of finitely many rational points in $\mathbb R^N$. A \emph{rational polyhedral cone} in $\mathbb R^N$ is a convex cone spanned by finitely many rational vectors. The dimension of a cone in $\R^N$ is the dimension of the minimal $\R$-vector space containing it.

A finite rational polyhedral subdivision $\mathcal C=\bigcup \mcal C_i$ of a rational polyhedral cone $\mcal C$ is a \emph{fan} if each face of $\mcal C_i$ is also a cone in the decomposition, and the intersection of two cones in the decomposition is a face of each. 

\subsection*{Divisors and line bundles}

Let $X$ be a normal projective variety and let $\mathbf k\in \{\mathbb Z,\mathbb Q,\mathbb R\}$. We denote by $\Div_{\mathbf k}(X)$ the group of $\mathbf k$-Cartier $\mathbf k$-divisors on $X$, and $\sim_{\mathbf k}$ and $\equiv$ denote the $\mathbf k$-linear and numerical equivalence of $\mathbb R$-divisors. If there is a morphism $X\lto Y$ to another normal projective variety, numerical equivalence over $Y$ is denoted by $\equiv_Y$. We denote $\Pic(X)_\mathbf k=\Div_\mathbf k(X)/\sim_\mathbf k$ and $N^1(X)_\mathbf k=\Div_\mathbf k(X)/\equiv$. 

The ample, big, nef, effective, and pseudo-effective cones in $N^1(X)_\R$ are denoted by $\Amp (X)$, $\B(X)$, $\Nef (X)$, $\Eff(X)$, and $\overline{\Eff}(X)$. The movable cone $\overline{\Mov}(X)$ is the closure of the cone in $N^1(X)_\R$ spanned by the classes of divisors whose base locus has codimension at least $2$.

If $X$ is a normal projective variety, and if $D$ is an $\R$-divisor on $X$, then the group of global sections of $D$ is 
\[
H^0(X,D)=\bigl\{f\in k(X)\mid \ddiv f+D \geq 0  \bigr\},
\]
and the associated \emph{section ring} is defined as
$$R(X,D)=\bigoplus_{m\in\N}H^0(X,mD).$$

A {\em pair} $(X,\Delta)$ consists of a normal projective variety $X$ and an $\R$-divisor $\Delta\geq0$ on $X$ such that $K_X+\Delta$ is $\R$-Cartier. When $(X,\Delta)$ is a pair, $K_X+\Delta$ is an {\em adjoint divisor}. The pair $(X,\Delta)$ has {\em klt\/} (respectively \emph{log canonical}) singularities if for every log resolution $f\colon Y\longrightarrow X$, the divisor $K_Y-f^*(K_X+\Delta)$ has all coefficients $>{-}1$ (respectively $\geq{-}1$).

A projective variety $X$ is said to be of \emph{Calabi-Yau type} if there exists a $\Q$-divisor $\Delta\geq0$ such that $(X,\Delta)$ is klt and $K_X+\Delta\equiv0$.

If $X$ is a normal projective variety, and if $D$ is an integral divisor on $X$, we denote by $\Bs|D|$ the base locus of $D$, whereas $\Fix|D|$ and $\Mob(D)$ denote the fixed and mobile parts of $D$. If $D$ is an $\mathbb R$-divisor on $X$, we denote
$$|D|_{\R}=\{D'\geq0\mid D\sim_{\R} D'\}\qquad\text{and}\qquad \sB(D)=\bigcap_{D'\in|D|_\mathbb R}\Supp D',$$
and we call $\sB(D)$ the {\em stable base locus} of $D$. We set $\sB(D)=X$ if $|D|_\mathbb R=\emptyset$. 

As mentioned in the introduction, we pay special attention to two classes of varieties: varieties for which the classical MMP can be performed successfully on the one hand, and Mori Dream Spaces on the other. We recall some definitions related to their study. 

\begin{dfn}\label{d_variouspolytopes}
Let $X$ be a projective $\Q$-factorial variety, let $S_1,\dots,S_p$ be distinct prime divisors on $X$, denote $V=\sum_{i=1}^p\mathbb R S_i\subseteq \Div_{\mathbb R}(X)$, and let $A$ be an ample $\mathbb Q$-divisor on $X$. We define
\begin{align*}
\mathcal L(V)&=\{ \Delta\in V \mid (X,\Delta)\text{ is log canonical}\},\\
\mathcal E_A(V)&=\{\Delta\in\mathcal L(V)\mid |K_X+A+\Delta|_{\R}\neq\emptyset\}.
\end{align*}
\end{dfn}
It is easy to check that $\mathcal L(V)$ is a rational polytope, cf.\ \cite[Lemma 3.7.2]{BCHM}. On the other hand, the fact that $\mathcal E_A(V)$ is a rational polytope is much harder, see Corollary \ref{cor:effective}.

\begin{dfn}\label{dfn:25}
A projective $\Q$-factorial variety $X$ is a Mori Dream Space if
\begin{enumerate}
\item $\Pic(X)_\Q =N^1(X)_\Q$,
\item $\Nef(X)$ is the affine hull of finitely many semiample line bundles, and
\item there are finitely many birational maps $f_i\colon X \dashrightarrow X_i$ to projective $\Q$-factorial varieties $X_i$ such that each $f_i$ is an isomorphism in codimension $1$, each $X_i$ satisfies (2), and $\overline{\Mov}(X)=\bigcup f^*_i\big(\Nef(X_i)\big)$.
\end{enumerate}
\end{dfn}

\subsection*{Models}

Let $X$ be a normal projective variety and let $D$ be an $\R$-Cartier divisor on $X$. We adopt some of the definitions of \emph{models} of $D$ from \cite{BCHM}. The models defined below are significant  because they provide  correct generalisations of minimal and canonical models for divisors which are not necessarily adjoint.

\begin{dfn}\label{dfn1}
Let $D\in \Div_{\R}(X)$ and let $\vphi \colon X \dashto Y$ be a contraction map to a normal projective variety $Y$ such that $D'= \vphi_{*}D$ is $\R$-Cartier.
\begin{enumerate}
\item The map $\vphi$ is \emph{$D$-nonpositive} (respectively \emph{$D$-negative}) if it is birational, and for a common resolution $(p,q)\colon W \longrightarrow X\times Y$, we can write
$$p^*D = q^*D'+ E,$$
where $E\geq 0$ is $q$-exceptional (respectively $E\geq 0$ is $q$-exceptional and $\Supp E$ contains the strict transform of the $\vphi$-exceptional divisors).
\item The map $\vphi$ is an \emph{optimal model} of $D$ if $\vphi$ is $D$-negative, $Y$ is $\Q$-factorial and $D'$ is nef. 
\item The map $\vphi$ is a \emph{semiample model} of $D$ if $\vphi$ is $D$-nonpositive and $D'$ is semiample. 
\item The map $\vphi$ is the \emph{ample model} of $D$ if there exist a birational contraction $f\colon X\dashrightarrow Z$ which is a semiample model of $D$, and a morphism with connected fibres $g\colon Z\longrightarrow Y$ such that $\varphi=g\circ f$ and $f_*D=g^*A$, where $A$ is an ample $\R$-divisor on $Y$. 
\end{enumerate}
\end{dfn}

\begin{rem}\label{rem:1}
\begin{enumerate}
\item[(i)] If the birational contraction $\vphi \colon X \dashto Y$ is $D$-nonpositive for some $D\in \Div_{\Q}(X)$, then $H^0(X,D)\simeq H^0(Y,\varphi_*D)$.
\item[(ii)] The ample model is unique up to isomorphism. Indeed, with the notation from Definition \ref{dfn1}, we may assume that $D$ is a $\Q$-divisor, and (i) shows that $R(X,D)\simeq R(Z,f_*D)$. This last ring is isomorphic to $R(Y,A)$, and therefore $Y\simeq\Proj R(X,D)$. Note that in Definition \ref{dfn1}(4), when $D$ is a $\Q$-divisor, the map $g$ is the \emph{semiample fibration} associated to $D$, see \cite[Theorem 2.1.27]{Laz04}.
\item[(iii)] In this paper we require that the ample model factors through a semiample model (compare with \cite[Definition 3.6.5, Lemma 3.6.6(3)]{BCHM}).
\item[(iv)] When $D$ is an adjoint divisor, then optimal, semiample and ample models are called \emph{log terminal}, \emph{good} and \emph{log canonical} models, respectively.
\end{enumerate}
\end{rem}

We recall the following important result known as the Negativity lemma, see \cite[Lemma 2.19]{Kol92}. This result and its corollary will be used in
Sections~\ref{sec:amplemodels} and \ref{sec:DMMP}.

\begin{lem}\label{lem:negativity}
Let $f\colon X \lto Y$ be a proper birational morphism, where $X$ is normal, and let $E$ be an f-exceptional divisor on $X$. Assume that $E\equiv_Y H + D$, where $H$ is $f$-nef and $D\geq0$ has no common components with $E$. 

Then $E\leq 0$.
\end{lem}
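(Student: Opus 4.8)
The plan is to prove the Negativity lemma (Lemma~\ref{lem:negativity}) by a local computation reducing to the classical negativity/Hodge index phenomenon for exceptional divisors, handled one component at a time via an induction on the number of prime components of $E$ appearing with positive coefficient.

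First I would reduce to the case where $E\geq 0$. Write $E = E_+ - E_-$ with $E_+, E_- \geq 0$ having no common components. Then $E_+ \equiv_Y H + (D + E_-)$, and $D + E_-$ is effective with no common component with $E_+$ (for $D$ this is the hypothesis, for $E_-$ this is by construction), and $H$ is still $f$-nef. So if we can prove the statement assuming $E\geq 0$, we get $E_+ \leq 0$, hence $E_+ = 0$, i.e.\ $E = -E_- \leq 0$, as desired. So from now on assume $E \geq 0$ and aim to show $E = 0$.

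Next, the main step: suppose for contradiction $E \neq 0$, and pick a prime divisor $S$ in the support of $E$. Since $S$ is $f$-exceptional, $\dim f(S) \leq \dim X - 2$, so we can find a curve $C \subseteq S$ contracted by $f$ (lying in a general fibre of $S \to f(S)$, or more carefully, cut $S$ down by $\dim S - 1$ very ample divisors pulled back appropriately so that the resulting curve is still contracted and meets $S$ properly) with $C \cdot S < 0$ — this is the classical fact that a component of the exceptional locus has negative self-intersection in the transverse direction, and it is where one uses that $X$ is normal and $f$ is birational (e.g.\ by passing to a resolution and using the Hodge index theorem on a surface obtained by slicing, as in the proof in \cite{Kol92}). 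Having fixed such $C$ with $C \cdot E = C \cdot S \cdot(\text{coeff}) + C\cdot(\text{rest of }E)$: actually one should choose $C$ inside a component $S$ with the most negative ``transverse'' behaviour and argue that $C \cdot E < 0$, while $C \cdot H \geq 0$ because $H$ is $f$-nef and $C$ is contracted, and $C \cdot D \geq 0$ because $D \geq 0$ and $C$ is not contained in $\Supp D$ — this last point requires choosing $C$ to avoid $\Supp D \cap S$ in codimension, which is possible since $D$ and $E$ share no component so $D$ does not contain $S$. Then $0 > C\cdot E = C\cdot H + C\cdot D \geq 0$, a contradiction; hence $E = 0$. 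The cleanest packaging is the standard one: reduce by generic hyperplane sections on $Y$ and their preimages to the surface case, where $f\colon X \to Y$ is a birational morphism of normal surfaces, $E$ is an effective exceptional curve, and the intersection form on exceptional curves is negative definite (Mumford/Du Val), so $E^2 < 0$ unless $E = 0$; but $E^2 = E\cdot H + E\cdot D \geq 0$ since $E\cdot H \geq 0$ ($H$ being $f$-nef, $E$ being exceptional hence contracted) and $E \cdot D \geq 0$ ($D \geq 0$ with no common component with $E$, so $E\cdot D$ is a sum of local intersection numbers at finitely many points, each $\geq 0$).

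The hard part will be making the reduction to surfaces rigorous while keeping track of the hypotheses — in particular ensuring that after cutting with general hyperplanes the divisor $E$ remains effective and exceptional, that $H$ remains relatively nef, and that $D$ stays effective with no common component with $E$, and that the intersection form stays negative definite. Since this is a well-known lemma with the reference \cite[Lemma 2.19]{Kol92} already supplied, I would in practice simply cite that proof after recording the reduction to $E \geq 0$ above; the novelty-free core is the negative-definiteness of the exceptional intersection form, which I would not reprove.
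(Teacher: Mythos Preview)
The paper does not prove this lemma at all: it simply records it as the Negativity lemma and cites \cite[Lemma 2.19]{Kol92}. Your sketch is the standard argument found in that reference (reduce to $E\geq0$ via $E=E_+-E_-$, then cut by general hyperplanes to reduce to a birational morphism of surfaces, where negative definiteness of the exceptional intersection form gives the contradiction $0>E^2=E\cdot H+E\cdot D\geq0$). So your approach is correct and is exactly the proof the paper is pointing to; in the context of this paper the appropriate ``proof'' is just the citation, which is what you conclude as well.
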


\begin{cor}\label{cor:negative}
Let $X\lto Z$ and $Y \lto Z$ be projective morphisms of normal projective varieties. Let $f\colon X \dashto Y$ be a birational contraction over $Z$, and let $(p,q)\colon W\lto X\times Y$ be a resolution of $f$. Let $D$ and $D'$ be $\R$-Cartier divisors on $X$ such that $f_*D$ and $f_*D'$ are $\R$-Cartier on $Y$, and assume that $D\equiv_Z D'$. Then
$$p^*D-q^*f_*D = p^*D'-q^*f_*D'.$$
In particular, $f$ is $D$-nonpositive (respectively $D$-negative) if and only if $f$ is $D'$-nonpositive (respectively $D'$-negative).
\end{cor}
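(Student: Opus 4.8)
The plan is to reduce everything to the Negativity lemma (Lemma \ref{lem:negativity}) applied to the resolution $(p,q)\colon W\to X\times Y$, and the key observation is that the claimed identity $p^*D-q^*f_*D = p^*D'-q^*f_*D'$ is equivalent to the vanishing of the divisor $E\deq p^*(D-D')-q^*f_*(D-D')$ on $W$. Since $f$ is a birational contraction, every $q$-exceptional divisor on $W$ that is not $p$-exceptional is the strict transform of an $f^{-1}$-exceptional divisor; more importantly, $f_*(D-D')$ as a cycle on $Y$ equals $q_*p^*(D-D')$, so $E$ is $q$-exceptional by construction. The strategy is then to show $E\equiv_Z 0$ over $Y$ (a fortiori over $Z$, but we only need it over $Y$) and apply the Negativity lemma twice, once to $E$ and once to $-E$, to conclude $E=0$.

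First I would set $B\deq D-D'$, so that $B\equiv_Z 0$ by hypothesis, and observe that $p^*B\equiv_W 0$ over $Z$ since $W\to X\to Z$, hence $p^*B\equiv_Z q^*f_*B$ would follow once we know $f_*B$ is $\R$-Cartier — but this is immediate because $f_*D$ and $f_*D'$ are assumed $\R$-Cartier. The genuinely necessary input is that $f_*B = q_*p^*B$ as Weil divisors on $Y$: this holds because $f$ is a birational contraction (so $p_*$ of a $p$-exceptional divisor vanishes and $q_*p^* $ computes the strict transform plus nothing on the divisorial exceptional locus in the appropriate sense). Consequently $E = p^*B - q^*f_*B$ is $q$-exceptional. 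Now $E\equiv_Y 0$: indeed $p^*B\equiv_W 0$ over $Z$ hence over $Y$ as well, no — here one must be a little careful, since $W\to Y$ need not factor through $Z$. Instead note $p^*B\equiv_Y q^*f_*B$ is exactly the assertion that $p^*B$ and $q^*f_*B$ differ by a divisor numerically trivial over $Y$; this we get from $B\equiv_Z 0$ together with $W\to Z$, because numerical triviality over $Z$ implies numerical triviality over $Y$ for divisors pulled back along $W\to Z$... The cleanest route: $p^*B$ is numerically trivial over $Z$, and since the morphism $W\to Z$ factors as $W\to Y\to Z$ is false in general, one should instead argue that $q^*f_*B$ is numerically trivial over $Z$ as well (as $Y\to Z$ and $f_*B\equiv_Z 0$, the latter because $f$ is a birational contraction over $Z$ and $D\equiv_Z D'$), so $E = p^*B - q^*f_*B$ is numerically trivial over $Z$; and being $q$-exceptional and numerically trivial over $Z$, it is in particular numerically trivial over $Y$ up to pullback from $Y$, which is what Lemma \ref{lem:negativity} requires.

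Having set this up, I would apply the Negativity lemma to $f'\deq q\colon W\to Y$ and the $q$-exceptional divisor $E$: writing $E = E^+ - E^-$ with $E^\pm\geq 0$ having no common components, the relation $E\equiv_Y 0$ gives $E^+\equiv_Y E^-$, so $E^+$ is $q$-exceptional with $E^+\equiv_Y H+E^-$ where $H=0$ is $q$-nef and $E^-\geq 0$ shares no component with $E^+$; the Negativity lemma yields $E^+\leq 0$, hence $E^+=0$. The symmetric argument applied to $-E$ gives $E^-=0$, so $E=0$, which is the desired identity. The final sentence of the corollary is then a formal consequence: by definition $f$ is $D$-nonpositive (resp.\ $D$-negative) precisely when $p^*D-q^*f_*D\geq 0$ is $q$-exceptional (resp.\ and its support contains the strict transforms of the $f$-exceptional divisors), and since this divisor equals $p^*D'-q^*f_*D'$, the two conditions are literally the same statement for $D$ and for $D'$; note the support condition is unaffected since it refers only to the exceptional set of $f$, which does not depend on $D$.

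The main obstacle I anticipate is the bookkeeping around numerical equivalence over $Y$ versus over $Z$, and in particular justifying that $E$ is numerically trivial over $Y$ — since $W\to Y$ does not factor through $Z$, one cannot simply pull back $B\equiv_Z 0$ along $W\to Y$. The resolution is to push the numerical triviality through $q^*f_*B$ instead, using that $f_*B\equiv_Z 0$ on $Y$ (which follows from $f$ being a birational contraction over $Z$, so that $f_*D$ and $f_*D'$ remain numerically equivalent over $Z$) and that $Y\to Z$ is a morphism; then $E$ is numerically trivial over $Z$, hence has numerically trivial restriction to the fibres of $q$, which is exactly the hypothesis needed to run the Negativity lemma with $H=0$. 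Everything else — that $E$ is $q$-exceptional, and the translation into the definitions of $D$-nonpositive and $D$-negative — is routine.
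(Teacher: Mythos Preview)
Your overall strategy is exactly the paper's: set $E=p^*(D-D')-q^*f_*(D-D')$, observe that $E$ is $q$-exceptional because $f$ is a contraction, show $E\equiv_Y 0$, and apply the Negativity lemma to $E$ and to $-E$ to force $E=0$. The paper's proof is literally these three sentences.

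The one genuine error in your proposal is the repeated assertion that ``$W\to Z$ factors as $W\to Y\to Z$ is false in general.'' In the present setup it is \emph{true}: $f$ is a birational contraction \emph{over $Z$}, and $(p,q)$ is a resolution of $f$, so the two compositions $W\xrightarrow{p}X\to Z$ and $W\xrightarrow{q}Y\to Z$ coincide. This is precisely what ``over $Z$'' means. Once you accept this, the step you find troublesome evaporates: any curve $C\subseteq W$ contracted by $q$ is automatically contracted by $W\to Z$, hence $p_*C$ (if nonzero) is contracted by $X\to Z$, and since $D-D'\equiv_Z 0$ we get $p^*(D-D')\cdot C=(D-D')\cdot p_*C=0$; together with $q^*f_*(D-D')\cdot C=0$ this gives $E\cdot C=0$, i.e.\ $E\equiv_Y 0$ directly. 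Your detour through ``$E\equiv_Z 0$, hence numerically trivial on fibres of $q$'' is in fact the same argument, but it secretly uses the very factorisation you denied; as written, your paragraph is internally inconsistent. Fix that one sentence and the proof is clean and identical to the paper's.
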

\begin{proof}
This result is \cite[Lemma 3.6.4]{BCHM}. The divisor $E=p^*(D-D')-q^*f_*(D-D')$ is $q$-exceptional since $f$ is a contraction, and we have $E\equiv_Y0$. We conclude by Lemma \ref{lem:negativity}.
\end{proof}

\subsection*{Asymptotic valuations}
A {\em geometric valuation\/} $\Gamma$ over a normal variety $X$ is a valuation on $k(X)$ given by the order of vanishing at the generic point of a prime divisor on some birational model $f\colon Y\longrightarrow X$. If $D$ is an $\R$-Cartier divisor on $X$, we abuse notation and write $\mult_\Gamma D$ to denote $\mult_\Gamma f^*D$.

The following definition is due to Nakayama.

\begin{dfn}\label{dfn:1}
Let $X$ be a normal projective variety, let $D$ be an $\R$-Cartier divisor such that $|D|_\R\neq\emptyset$, and let $\Gamma$ be a geometric valuation over $X$. The {\em asymptotic order of vanishing\/} of $D$ along $\Gamma$ is 
$$o_\Gamma (D)=\inf\{\mult_\Gamma D'\mid D'\in|D|_\R\}.$$
If $D$ is a big divisor, we define
\[
\textstyle N_\sigma(D)=\sum_\Gamma o_\Gamma(D)\cdot\Gamma\quad\text{ and }\quad  P_\sigma(D) = D - N_\sigma(D),
\]
where the sum runs over all prime divisors $\Gamma$ on $X$.
\end{dfn}

\begin{rem}
On a surface $X$, the construction above gives the classical Zariski decomposition: this is a unique decomposition $D=P_\sigma(D)+N_\sigma(D)$, where $P_\sigma(D)$ is nef, and $N_\sigma(D)=\sum\gamma_i \Gamma_i$ is an effective divisor such that $P_\sigma(D)\cdot\Gamma_i=0$ for all $i$, and the matrix $(\Gamma_i\cdot\Gamma_j)$ is negative definite. We use this characterisation in Example \ref{exa:notgen}.
\end{rem}

\begin{lem}\label{d_sigma}
Let $X$ be a $\Q$-factorial projective variety, let $D$ be a big $\mathbb R$-divisor, and let $\Gamma$ be a prime divisor. Then $o_\Gamma(D)$ depends only on the numerical class of $D$. The function $o_\Gamma$ is homogeneous of degree one, convex and continuous on $\B(X)$. The formal sum $N_\sigma(D)$ is an $\mathbb R$-divisor, the divisor $P_\sigma(D)$ is movable, and for any $\mathbb R$-divisor $0\le F\le N_\sigma(D)$ we have $N_\sigma(D-F)=N_\sigma(D)-F$. If $E\geq0$ is an $\R$-divisor on $X$ such that $D-E\in\overline{\Mov}(X)$, then $E\geq N_\sigma(D)$.
\end{lem}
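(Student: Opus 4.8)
The plan is to verify each of the several assertions in Lemma~\ref{d_sigma} in turn, since they are essentially independent and build on one another. I would begin with the claim that $o_\Gamma(D)$ depends only on the numerical class of $D$. The key input here is that on a $\Q$-factorial projective variety, numerically equivalent big $\R$-divisors have the same set of effective representatives up to $\R$-linear equivalence adjusted by a fixed divisor; more precisely, if $D\equiv D'$ are big then $D' = D + (D'-D)$, and since $D$ is big, for small $\varepsilon$ the class $D - \varepsilon(D'-D)$ is still big, hence effective, so one can move between $|D|_\R$ and $|D'|_\R$ by adding a fixed effective correction, which after taking infima of $\mult_\Gamma$ washes out. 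This is the standard argument of Nakayama; I would cite it via the comparison with $\sigma_\Gamma$ but re-derive the one-line estimate. Homogeneity of degree one in $D$ is immediate from the definition, since scaling $D$ by $t>0$ scales $|D|_\R$ and all multiplicities by $t$. Convexity follows because if $D_1' \in |D_1|_\R$ and $D_2'\in |D_2|_\R$ then $tD_1' + (1-t)D_2' \in |tD_1 + (1-t)D_2|_\R$, and $\mult_\Gamma$ is additive on effective divisors; taking infima gives $o_\Gamma(tD_1+(1-t)D_2) \le t\,o_\Gamma(D_1) + (1-t)\,o_\Gamma(D_2)$. Continuity on $\B(X)$ is then a formal consequence: a convex function on an open convex subset of a finite-dimensional vector space which is finite-valued (finiteness follows since $D$ big implies $o_\Gamma(D)<\infty$ for all $\Gamma$, as some effective representative avoids $\Gamma$ or at least has bounded multiplicity) is automatically locally Lipschitz, hence continuous.

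Next I would treat the claim that $N_\sigma(D)$ is a genuine $\R$-divisor, i.e.\ that $o_\Gamma(D)=0$ for all but finitely many prime divisors $\Gamma$. Fix an effective $D_0\in|D|_\R$; then $o_\Gamma(D)\le\mult_\Gamma D_0$, which is nonzero only for the finitely many components of $D_0$, so the sum is finite. That $P_\sigma(D)=D-N_\sigma(D)$ is movable: it suffices to show $o_\Gamma(P_\sigma(D))=0$ for every prime $\Gamma$, since then $P_\sigma(D)$ lies in the closure of the cone of divisors with codimension $\ge 2$ base locus; this is exactly Nakayama's result that $N_\sigma(P_\sigma(D))=0$, which I would deduce from the identity $N_\sigma(D-F)=N_\sigma(D)-F$ applied with $F=N_\sigma(D)$. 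For that identity: the inequality $N_\sigma(D-F)\ge N_\sigma(D)-F$ holds because adding $F$ to any effective representative of $D-F$ gives one of $D$, so $o_\Gamma(D)\le o_\Gamma(D-F)+\mult_\Gamma F$; the reverse inequality $N_\sigma(D-F)\le N_\sigma(D)-F$ uses that $F\le N_\sigma(D)$ means $\mult_\Gamma F\le o_\Gamma(D)$ for each $\Gamma$, and one shows that for every effective $D'\in|D|_\R$ one has $D'\ge N_\sigma(D)\ge F$ componentwise along the relevant primes, hence $D'-F\in|D-F|_\R$, giving $o_\Gamma(D-F)\le\mult_\Gamma(D'-F)=\mult_\Gamma D'-\mult_\Gamma F$; taking the infimum yields the claim. (The fact that $D'\ge N_\sigma(D)$ for every $D'\in|D|_\R$ is itself the final assertion of the lemma in the special case $E=D'$, so I would prove that assertion first and reuse it.)

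Finally, the last assertion: if $E\ge 0$ and $D-E\in\overline{\Mov}(X)$, then $E\ge N_\sigma(D)$. The point is to show $\mult_\Gamma E\ge o_\Gamma(D)$ for each prime $\Gamma$. Since $D-E$ is movable and $D$ is big, $D-E$ is a limit of movable classes, and by semicontinuity/definition of $o_\Gamma$ on movable classes one has $o_\Gamma(D-E) = 0$ (movable divisors have no asymptotic fixed part along any divisor — this again is Nakayama, or can be seen from the fact that a divisor whose base locus has codimension $\ge 2$ satisfies $\mult_\Gamma$ of its mobile part $=0$ and passing to the limit using continuity of $o_\Gamma$ on $\B(X)$, noting $D-E$ may lie on the boundary so one perturbs by a small big movable class). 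Then by the subadditivity $o_\Gamma(D)\le o_\Gamma(D-E)+o_\Gamma(E)\le 0 + \mult_\Gamma E$, since $E$ itself is an effective representative of its own class so $o_\Gamma(E)\le\mult_\Gamma E$. Summing over $\Gamma$ gives $E\ge N_\sigma(D)$.

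I expect the main obstacle to be the careful handling of boundary behaviour: $o_\Gamma$ is only asserted continuous on the \emph{open} big cone $\B(X)$, whereas movable classes such as $P_\sigma(D)$ or $D-E$ may lie on $\partial\B(X)$, so statements like ``$o_\Gamma$ vanishes on movable classes'' need to be established by perturbing into the interior with a small big class and passing to the limit, controlling that $o_\Gamma$ does not jump. Making this limiting argument rigorous — equivalently, reproving the relevant parts of Nakayama's theory in the form needed here — is the technical heart; everything else is formal manipulation of infima and additivity of $\mult_\Gamma$ on effective divisors.
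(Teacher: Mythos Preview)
The paper does not give a proof of this lemma at all: its entire ``proof'' is the sentence ``See \cite[\S III.1]{Nak04}.'' So there is no argument to compare against; the result is simply imported wholesale from Nakayama's book. Your proposal, by contrast, sketches out how one would actually re-derive the relevant pieces of Nakayama's theory of $\sigma$-decompositions. The outline you give is essentially the content of \cite[\S III.1]{Nak04} itself (homogeneity and convexity from the definition, continuity from convexity on an open cone, finiteness of $N_\sigma$ by bounding against a single effective representative, the subtraction identity via the two-sided infimum comparison, and the minimality of $N_\sigma$ among ``corrections into the movable cone''), and you correctly flag the one genuinely delicate point, namely that $o_\Gamma$ is a priori only defined and continuous on the open big cone, so statements about $P_\sigma(D)$ or $D-E$ lying in $\overline{\Mov}(X)$ require a perturbation-and-limit argument to make sense of $o_\Gamma$ there.

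Two small caveats worth tightening if you actually write this out. First, your argument for numerical invariance is slightly circular as stated: you implicitly use continuity of $o_\Gamma$ (to pass to a limit after perturbing by $\varepsilon A$) before having established it, and continuity in turn rests on convexity plus finiteness on $\B(X)$, which is fine, but you should order the steps so that homogeneity, convexity, and continuity come first and numerical invariance is deduced afterwards. Second, in the proof of $N_\sigma(D-F)=N_\sigma(D)-F$ you invoke ``the final assertion of the lemma in the special case $E=D'$'' to get $D'\ge N_\sigma(D)$, but this is overkill: the inequality $\mult_\Gamma D' \ge o_\Gamma(D)$ is immediate from the definition of $o_\Gamma$ as an infimum over $|D|_\R$, with no appeal to movability needed. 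Neither of these is a real gap, just a matter of presentation.
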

\begin{proof}
See \cite[\S III.1]{Nak04}.
\end{proof}

In certain situations we have more information on the divisor $P_\sigma(D)$.

\begin{lem}\label{lem:projectionMov}
Let $X$ be a $\Q$-factorial projective variety, and let $D$ be a big $\Q$-divisor on $X$. Assume that the cone $\overline{\Mov}(X)$ is rational polyhedral. 

Then $P_\sigma(D)$ is a $\Q$-divisor, and $R(X,D)$ is finitely generated if and only if $R(X,P_\sigma(D))$ is finitely generated.
\end{lem}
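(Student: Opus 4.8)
The plan is to show both implications directly, using the decomposition $D = P_\sigma(D) + N_\sigma(D)$ from Lemma~\ref{d_sigma}. First I would establish that $P_\sigma(D)$ is a $\Q$-divisor. Since $D$ is big, the set $|D|_\R$ is nonempty, and for each prime divisor $\Gamma$ the number $o_\Gamma(D)$ is the value of the function $o_\Gamma$, which by Lemma~\ref{d_sigma} is convex on $\B(X)$ and depends only on the numerical class of $D$. The key point is that under the hypothesis that $\overline{\Mov}(X)$ is rational polyhedral, $N_\sigma$ should be piecewise linear with rational coefficients near $D$: indeed, $P_\sigma(D)$ lies on the boundary of $\overline{\Mov}(X)$ in the relevant range, and the map $D \mapsto P_\sigma(D)$ is a projection onto $\overline{\Mov}(X)$ characterised by the minimality property at the end of Lemma~\ref{d_sigma} (for $E \geq 0$ with $D - E$ movable, $E \geq N_\sigma(D)$). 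Because $\overline{\Mov}(X)$ is cut out by finitely many rational linear inequalities, this projection is rational, so $N_\sigma(D)$ and hence $P_\sigma(D)$ are $\Q$-divisors.

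For the equivalence of finite generation, I would use the fact that $N_\sigma(D)$ is the "fixed part at infinity": by Lemma~\ref{d_sigma}, for any $\R$-divisor $0 \leq F \leq N_\sigma(D)$ we have $N_\sigma(D - F) = N_\sigma(D) - F$, and more to the point $o_\Gamma(mD) = m\, o_\Gamma(D)$ combined with the fact that $o_\Gamma(D) = \lim \frac{1}{m}\operatorname{mult}_\Gamma \operatorname{Fix}|mD|$ for integral big $D$ (Nakayama). The upshot one wants is the isomorphism of rings
\[
R(X, D) \simeq R(X, P_\sigma(D))
\]
after passing to a suitable truncation — or at least a graded-algebra isomorphism in high enough degree — coming from the identity $H^0(X, mD) = H^0(X, m P_\sigma(D))$ for $m$ sufficiently divisible, since the sections of $mD$ all vanish to order $\geq m\, o_\Gamma(D) = \operatorname{mult}_\Gamma(m N_\sigma(D))$ along each $\Gamma$, so multiplication by the canonical section of $m N_\sigma(D)$ identifies the two spaces. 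Finite generation is insensitive to truncation (a graded ring is finitely generated if and only if some Veronese subring is), so $R(X,D)$ is finitely generated if and only if $R(X, P_\sigma(D))$ is.

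The main obstacle I expect is the rationality of $P_\sigma(D)$: one must rule out the possibility that the "correct" decomposition has irrational coefficients even though $\overline{\Mov}(X)$ is rational polyhedral. The cleanest route is to argue that $P_\sigma$ realises $D$ as $D = N_\sigma(D) + P_\sigma(D)$ with $P_\sigma(D) \in \partial\overline{\Mov}(X)$ and $N_\sigma(D)$ supported on finitely many prime divisors, and that the locus of such decompositions is a rational polyhedral problem: one is minimising $\sum_\Gamma \operatorname{mult}_\Gamma E$ over effective $E$ with $D - E \in \overline{\Mov}(X)$, a rational polyhedral cone; a linear program over a rational polyhedron with a rational objective and rational data has a rational optimum, and by the uniqueness statement in Lemma~\ref{d_sigma} this optimum is exactly $N_\sigma(D)$. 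Once rationality is in hand, the ring isomorphism and the truncation argument are routine, so the substance of the proof is entirely in this convex-geometric step.
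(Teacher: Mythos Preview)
Your proposal is correct and follows essentially the same approach as the paper. For rationality, the paper sets $\mathcal H = D - \sum\R_+\Gamma_i$ (with $\Gamma_i$ the components of $N_\sigma(D)$) and observes that $P_\sigma(D)$ is an extreme point of the rational polyhedron $\overline{\Mov}(X)\cap\mathcal H$, which is the dual formulation of your linear-programming argument; for the finite generation equivalence, the paper's one-line ``$P_\sigma(mD)\geq\Mob(mD)$'' is exactly your observation that $\Fix|mD|\geq mN_\sigma(D)$, giving $H^0(X,mD)\simeq H^0(X,mP_\sigma(D))$ on a Veronese subring.
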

\begin{proof}
Let $\Gamma_i$ be the components of $N_\sigma(D)$, and denote 
$$\mathcal H=D-\sum\R_+\Gamma_i\quad\text{and}\quad\mathcal G=P_\sigma(D)-\sum\R_+\Gamma_i.$$ 
Then we have $\overline{\Mov}(X)\cap\mathcal H\subseteq\mathcal G$ by Lemma \ref{d_sigma}. Since $\overline{\Mov}(X)\cap\mathcal H$ is an intersection of finitely many rational half-spaces, and as $P_\sigma(D)\in\overline{\Mov}(X)$ is an extremal point of $\mathcal G$, we conclude that $P_\sigma(D)$ is a $\Q$-divisor. 

For the second statement, we may assume that $D$ is an integral divisor and that $|D|\neq\emptyset$, so the claim follows from $P_\sigma(mD)\geq\Mob(mD)$ for every positive integer $m$.
\end{proof}

The proof of the following lemma is analogous to that of \cite[Lemma 5.2]{CoL10}, and it will be used in Section \ref{sec:DMMP} to ensure that a certain MMP terminates.

\begin{lem}\label{lem:2}
Let $f\colon X\dashrightarrow Y$ be a birational contraction between projective $\Q$-factorial varieties, and let $\mathcal C\subseteq\Div_\R(X)$ be a cone such that $f$ is $D$-nonpositive for all $D\in\mathcal C$. Let $\Gamma$ be a geometric valuation on $k(X)$.

Then $o_\Gamma$ is linear on $\mathcal C$ if and only if it is linear on the cone $f_*\mathcal C\subseteq\Div_\R(Y)$.
\end{lem}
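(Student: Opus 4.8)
The plan is to exploit the fact that a $D$-nonpositive birational contraction preserves asymptotic orders of vanishing, and then pass this through the linearity statement. First I would fix notation: for a common resolution $(p,q)\colon W\to X\times Y$ of $f$, and for $D\in\mathcal C$, write $p^*D=q^*f_*D+E_D$ with $E_D\geq0$ and $q$-exceptional, as in Definition~\ref{dfn1}(1); by Corollary~\ref{cor:negative} (applied over $Z=\Spec\C$), the divisor $E_D$ depends on $D$ only through its numerical class and, crucially, $D\mapsto E_D$ is additive and positively homogeneous on $\mathcal C$ — this is the linear-algebra content that makes everything work. The key comparison is then $o_\Gamma(p^*D)=o_\Gamma(q^*f_*D)+\mult_\Gamma E_D$, which I would justify from the definition of $o_\Gamma$ together with Lemma~\ref{d_sigma}: since $E_D$ is a fixed effective divisor contained in every element of $|p^*D|_\R$ (it is $q$-exceptional and $f$ is a contraction, so by the Negativity lemma it cannot be moved off), subtracting $E_D$ induces a bijection $|p^*D|_\R\to|q^*f_*D|_\R$ preserving the inequality, hence $N_\sigma(p^*D)=N_\sigma(q^*f_*D)+E_D$ componentwise and in particular the stated identity for $\mult_\Gamma$. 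Here one should note that $o_\Gamma$ computed on $W$ agrees with $o_\Gamma$ computed on $X$ (resp.\ on $Y$) for the pullback of $D$ (resp.\ of $f_*D$), since $o_\Gamma$ of a divisor equals $o_\Gamma$ of its pullback to any birational model on which $\Gamma$ is realised — this is immediate from the definition of $\mult_\Gamma$ of a (not necessarily prime-supported) $\R$-Cartier divisor.

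With the identity
$$o_\Gamma(D)=o_\Gamma(f_*D)+\mult_\Gamma E_D\qquad\text{for all }D\in\mathcal C$$
in hand, the equivalence is essentially formal. The map $D\mapsto f_*D$ is a linear isomorphism from $\mathcal C$ onto $f_*\mathcal C$ (with linear inverse $D'\mapsto f^{-1}_*D'$, legitimate because $f$ is a birational contraction), and $D\mapsto\mult_\Gamma E_D$ is linear on $\mathcal C$ by the additivity and homogeneity of $D\mapsto E_D$ established above. Therefore $o_\Gamma$ is linear on $\mathcal C$ if and only if $o_\Gamma - (D\mapsto\mult_\Gamma E_D)$ is linear on $\mathcal C$, and the latter function is exactly $o_\Gamma\circ f_*$; since $f_*$ is a linear isomorphism onto $f_*\mathcal C$, this composite is linear on $\mathcal C$ precisely when $o_\Gamma$ is linear on $f_*\mathcal C$. (All the functions involved are continuous and positively homogeneous on the relevant big cones by Lemma~\ref{d_sigma}, so ``linear'' unambiguously means ``the restriction of a linear functional''.)

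The main obstacle I anticipate is the bookkeeping in the first paragraph — making precise that the extra term $\mult_\Gamma E_D$ is genuinely linear in $D$ and that it accounts for all the discrepancy between $o_\Gamma(D)$ and $o_\Gamma(f_*D)$. The subtlety is that $\Gamma$ is an arbitrary geometric valuation over $X$, not necessarily a divisor on $W$; but we may freely replace $W$ by a higher model on which $\Gamma$ is a prime divisor without changing any of the $o_\Gamma$'s or the relation $p^*D=q^*f_*D+E_D$ (only pulling $E_D$ back further), so this is harmless. Once $\Gamma$ sits on $W$, the identity $o_\Gamma(p^*D)=o_\Gamma(q^*f_*D)+\mult_\Gamma E_D$ is the statement that $N_\sigma$ of a big divisor is unchanged under subtracting an effective divisor that lies below it, which is the last assertion of Lemma~\ref{d_sigma}; applying it to $F=$ the part of $E_D$ supported away from... in fact applying it directly gives $N_\sigma(p^*D - E_D)=N_\sigma(p^*D)-E_D$ once we know $p^*D-E_D=q^*f_*D$ is movable-enough, and $q^*f_*D$ is a pullback of a divisor from $Y$, hence has no exceptional components forced into its $N_\sigma$ beyond what is already there. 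I would present this carefully but expect no conceptual difficulty beyond it; the proof is then short, mirroring \cite[Lemma 5.2]{CoL10} as the authors indicate.
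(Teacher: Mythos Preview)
Your approach is correct and essentially the same as the paper's: both establish that $o_\Gamma(D)-o_\Gamma(f_*D)$ is linear in $D$ (your $\mult_\Gamma E_D$ equals the paper's $\mult_\Gamma D-\mult_\Gamma f_*D$), via the bijection between $|D|_\R$ and $|f_*D|_\R$ induced by $D$-nonpositivity. Two minor points: the detour through $N_\sigma$ is unnecessary and technically requires bigness, which is not assumed---the identity for $o_\Gamma$ follows directly from the bijection by taking infima; and your claim that $f_*$ is a linear \emph{isomorphism} on $\mathcal C$ fails if $\mathcal C$ contains $f$-exceptional divisors, though the final equivalence still holds since linearity of $o_\Gamma\circ f_*$ on $\mathcal C$ is equivalent to linearity of $o_\Gamma$ on $f_*\mathcal C$ by surjectivity alone.
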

\begin{proof}
Let $(p,q)\colon W\longrightarrow X\times Y$ be a resolution of $f$. Then for every $D\in\mathcal C$ we have $p^*D=q^*f_*D+E_D$, where
$E_D\geq0$ is a $q$-exceptional divisor. This implies that $f_*$ restricts to an isomorphism between $|D|_\R$ and $|f_*D|_\R$. Denote 
$$V_D=\{D_X-D\mid D_X\in|D|_\R\}\quad\text{and}\quad W_D=\{D_Y-f_*D\mid D_Y\in|f_*D|_\R\}.$$
By the above, we have the isomorphism $f_*|_{V_D}\colon V_D\simeq W_D$, and also $\mult_\Gamma P_X=\mult_\Gamma f_*P_X$ for every $P_X\in V_D$ by \cite[Lemma 5.1(2)]{CoL10}. Therefore
\[
o_\Gamma(D)-\mult_\Gamma D=\inf_{P_X\in V_D}\mult_\Gamma
P_X=\inf_{P_X\in V_D}\mult_\Gamma f_*P_X=o_\Gamma(f_*D)-\mult_\Gamma
f_*D,
\]
hence the function $o_\Gamma(\cdot)-o_\Gamma\big(f_*(\cdot)\big)\colon \mcal C\longrightarrow\R$ is equal to the linear map $\mult_\Gamma(\cdot)-\mult_\Gamma f_*(\cdot)$.  The lemma follows.
\end{proof}
\section{Around finite generation}\label{sec:fingen}

In this section, we recall the definition of divisorial rings on a normal projective variety from \cite{CaL10,CoL10}, and we give some examples that we use later -- adjoint rings, and rings spanned by big divisors on varieties of Calabi-Yau type. We then relate finite generation to asymptotic valuations and to properties of the stable base locus.

\subsection*{Divisorial rings}
If $X$ is a normal projective variety, and if $\mathcal S\subseteq\Div_{\mathbb Q}(X)$ is a finitely generated monoid, then
$$R(X,\mathcal{S})=\bigoplus_{D\in\mathcal{S}}H^0\big(X,D)$$
is a {\em divisorial $\mathcal S$-graded ring\/}. If $\mathcal C\subseteq\Div_{\mathbb R}(X)$ is a rational polyhedral cone, then $\mathcal S=\mathcal C\cap\Div(X)$ is a finitely generated monoid by Gordan's lemma, and we define the ring $R(X,\mathcal C)$ to be $R(X,\mathcal S)$. We also use divisorial rings of the form
\[
\mathfrak R=R(X;D_1, \dots, D_r)=\bigoplus_{(n_1,\dots, n_r)\in \N^r} H^0(X,
n_1D_1+\dots + n_rD_r),
\]
where $D_1,\dots, D_r\in\Div_\Q(X)$. If $D_i$ are adjoint divisors, the ring $\mathfrak R$ is an {\em adjoint ring\/}. The {\em support\/} of $\mathfrak R$ is the cone
\[
\textstyle\Supp \mathfrak R=\{D\in\sum\R_+D_i\mid |D|_\R\neq\emptyset\}\subseteq \Div_\R(X),
\]
and similarly for rings of the form $R(X,\mathcal C)$.

If $X$ is a $\Q$-factorial projective with $\Pic(X)_\Q= N^1(X)_\Q$, and if $D_1, \dots, D_r$ is a basis of $\Pic(X)_\Q$ such that $\Effb(X)\subseteq \sum \R_+D_i$, then $R(X;D_1,\dots,D_r)$ is a \emph{Cox ring} of $X$. The finite generation of this ring is independent of the choice of $D_1,\dots,D_r$. 

Throughout the paper, we use several properties of finitely generated divisorial rings without explicit mention, see \cite[\S 2.4]{CaL10} for details and background. The one we use most is recalled in the following lemma. 

\begin{lem}\label{lem:3}
Let $X$ be a normal projective variety, let $D_1,\dots,D_r$ be divisors in $\Div_\Q(X)$, and let $p_1,\dots,p_r$ be positive rational numbers. 

Then $R(X;D_1,\dots,D_r)$ is finitely generated if and only if $R(X;p_1D_1,\dots,p_rD_r)$ is finitely generated. 
\end{lem}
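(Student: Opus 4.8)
The plan is to isolate the purely ring-theoretic content. Write $T=\bigoplus_{n\in\N^r}T_n$ for the $\N^r$-graded ring $R(X;D_1,\dots,D_r)$, where $T_n=H^0(X,\sum n_iD_i)$. If $k_1,\dots,k_r$ are positive integers, then $R(X;k_1D_1,\dots,k_rD_r)$ is, after relabelling the grading, the subring $T_S=\bigoplus_{m\in S}T_m$ with $S=k_1\N\times\cdots\times k_r\N\subseteq\N^r$. I will show that $T$ is a finitely generated $\C$-algebra if and only if $T_S$ is, and then deduce the general case by clearing denominators. Two ambient facts will be used: $T_0=H^0(X,\OO_X)=\C$ is a field, and $T$ is an integral domain, since $f\in H^0(X,\sum n_iD_i)\mapsto f\,t_1^{n_1}\cdots t_r^{n_r}$ embeds $T$ into the polynomial ring $k(X)[t_1,\dots,t_r]$.

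\textbf{Reduction to integer scalings.} Write $p_i=a_i/b$ with a common denominator $b\in\N_{>0}$ and $a_i\in\N_{>0}$, and set $E_i=\tfrac1b D_i\in\Div_\Q(X)$. Then $R(X;D_1,\dots,D_r)=R(X;bE_1,\dots,bE_r)$ and $R(X;p_1D_1,\dots,p_rD_r)=R(X;a_1E_1,\dots,a_rE_r)$. Applying the integral case twice to the divisors $E_1,\dots,E_r$, once with the integer vector $(b,\dots,b)$ and once with $(a_1,\dots,a_r)$, both of these rings are finitely generated precisely when $R(X;E_1,\dots,E_r)$ is; this gives the lemma.

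\textbf{The integral statement.} Suppose first that $T$ is finitely generated. Then $T$ is Noetherian, and with $N=\operatorname{lcm}(k_1,\dots,k_r)$ we have $x^N\in T_{Nn}\subseteq T_S$ for every homogeneous $x\in T_n$, so every homogeneous element of $T$ is integral over $T_S$. Replacing a finite set of $\C$-algebra generators of $T$ by their homogeneous components, we get finitely many homogeneous generators, each integral over $T_S$, so $T$ is a finite $T_S$-module. Since $\C\subseteq T_S\subseteq T$ with $\C$ Noetherian, $T$ a finitely generated $\C$-algebra, and $T$ finite over $T_S$, the Artin--Tate lemma shows $T_S$ is a finitely generated $\C$-algebra. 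Conversely, suppose $T_S$ is finitely generated, hence Noetherian. Decompose $T=\bigoplus_{c}T^{[c]}$ as a $T_S$-module, over the finite index set $c\in\prod_i\Z/k_i\Z$, where $T^{[c]}=\bigoplus_{n\equiv c}T_n$; the piece $T^{[0]}=T_S$ is clearly a finite $T_S$-module. For $c\neq 0$ with $T^{[c]}\neq 0$, choose $0\neq y\in T_n$ with $n\equiv c$, and put $j+1=\operatorname{lcm}_i\bigl(k_i/\gcd(k_i,c_i)\bigr)\geq 2$. Then $x:=y^{j}\neq 0$ because $T$ is a domain, $x\cdot T^{[c]}\subseteq T_S$ by the choice of $j$, and multiplication by $x$ is injective, so $T^{[c]}$ is isomorphic as a $T_S$-module to an ideal of $T_S$, hence finitely generated. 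Therefore $T$ is a finite $T_S$-module, so $T$ is Noetherian and finitely generated as a $T_S$-algebra; as $T_S$ is a finitely generated $\C$-algebra, so is $T$.

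The only step needing genuine idea rather than bookkeeping is the converse implication: one must exhibit each coset-component $T^{[c]}$ as an ideal of the Noetherian ring $T_S$, which is achieved by multiplying by a suitable power of a nonzero section, and here it is precisely the domain property of $T$ — a consequence of its geometric origin inside $k(X)[t_1,\dots,t_r]$ — that makes the trick work. Everything else is the Artin--Tate lemma and the elementary fact that an algebra generated over a ring by finitely many integral elements is module-finite over it.
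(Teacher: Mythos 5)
Your proof is correct. The paper does not prove this lemma itself — it is recalled with a pointer to \cite[\S 2.4]{CaL10} — and your argument (reduce to integer scalings, then compare $T$ with the Veronese-type subring $T_S$ via integrality and Artin--Tate in one direction, and the coset decomposition $T=\bigoplus_c T^{[c]}$ with the multiplication-by-$y^j$ embedding of each piece as an ideal of the Noetherian ring $T_S$ in the other, using that $T$ is a domain) is essentially the standard proof underlying that reference, so there is nothing to correct or compare further.
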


\subsection*{Relation to asymptotic valuations}
Finite generation of a divisorial ring $\mathfrak R$ has important consequences on the behavior of the asymptotic order functions, and therefore on the convex geometry of $\Supp \mathfrak R$, as observed in \cite{ELMNP}.   

\begin{thm}
\label{thm:ELMNP}
Let $X$ be a projective $\Q$-factorial variety, and let $\mathcal C\subseteq\Div_\R(X)$ be a rational polyhedral cone. Assume that the ring $\mathfrak R=R(X,\mcal C)$ is finitely generated. Then:
\begin{enumerate}
\item $\Supp \mathfrak R$ is a rational polyhedral cone,
\item if $\Supp \mathfrak R$ contains a big divisor, then all pseudo-effective divisors in $\Supp\mathfrak R$ are in fact effective,
\item there is a finite rational polyhedral subdivision $\Supp \mathfrak R=\bigcup \mcal{C}_i$ such that $o_\Gamma$ is linear on $\mcal{C}_i$ for every geometric valuation $\Gamma$ over $X$, and the cones $\mcal C_i$ form a fan,
\item there is a positive integer $d$ and a resolution $f \colon \widetilde{X} \lto X$ such that $\Mob f^*(dD)$ is basepoint free for every $D\in \Supp \mathfrak R\cap \Div(X)$, and $\Mob f^*(kdD)=k\Mob f^*(dD)$ for every positive integer $k$.
\end{enumerate}
\end{thm}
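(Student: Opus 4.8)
The plan is to reduce everything to a single, well-chosen multigraded rescaling of $\mathfrak{R}$ and then exploit the standard structure theory of finitely generated divisorial rings (as in \cite[\S2.4]{CaL10}), together with Nakayama's theory of $\sigma$-decompositions from Lemma \ref{d_sigma}. First I would pick generators $D_1,\dots,D_r$ of the monoid $\mathcal{C}\cap\Div(X)$, so that $\mathfrak{R}=R(X;D_1,\dots,D_r)$; by Lemma \ref{lem:3} we are free to replace each $D_i$ by a positive multiple whenever convenient. By the general theory, since $\mathfrak{R}$ is finitely generated there is a rational polyhedral subdivision of $\sum\R_+D_i$ into finitely many cones on each of which the support function and the Zariski-type decompositions behave linearly; more precisely, after a truncation one finds an integer $d$ and a finite fan refining $\sum\R_+D_i$ such that on each chamber the assignment $D\mapsto\Mob(dD)$ is additive. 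This is the engine behind (1), (3) and (4) simultaneously.

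For (1): the support $\Supp\mathfrak{R}=\{D\in\sum\R_+D_i\mid |D|_\R\neq\emptyset\}$ is, by finite generation, a union of those chambers of the fan above on which the linear part of $\Mob(d\,\cdot\,)$ is nonzero together with the chambers where $H^0$ is generically nonzero; equivalently, it is cut out inside $\sum\R_+D_i$ by finitely many rational linear inequalities coming from the vanishing of the finitely many generators of $\mathfrak{R}$ in each graded piece, hence it is rational polyhedral. For (2): if $D_0\in\Supp\mathfrak{R}$ is big, then for any \psef $D\in\Supp\mathfrak{R}$ and any $0<\varepsilon\ll1$ the divisor $D+\varepsilon D_0$ is big and lies in $\Supp\mathfrak{R}$, hence lies in a chamber where $o_\Gamma$ and $N_\sigma$ are linear; letting $\varepsilon\to0$ and using continuity of $o_\Gamma$ on $\B(X)$ from Lemma \ref{d_sigma}, together with the fact that the $o_\Gamma(D+\varepsilon D_0)$ are bounded by the fixed coefficients of the generators, one deduces that $D$ itself has nonempty linear system, i.e.\ is effective.

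For (3): fix the resolution and integer $d$ from finite generation; on each maximal chamber $\mathcal{C}_i$ of the fan, $\Mob f^*(d\,\cdot\,)$ is additive on $\mathcal{C}_i\cap\Div(X)$, and since $o_\Gamma(D)=\mult_\Gamma D-\lim_k\tfrac1{kd}\mult_\Gamma\Mob f^*(kdD)$ (using that $o_\Gamma$ is computed by the mobile parts after passing to $f$), linearity of $\Mob f^*(d\,\cdot\,)$ on the chamber forces linearity of $o_\Gamma$ there; one then refines the subdivision so that the chambers form a fan, which is automatic from the construction in \cite[\S2.4]{CaL10}. Statement (4) is essentially the definition of the $d$ and $f$ produced by finite generation: pass to a log resolution $f\colon\widetilde X\to X$ on which the finitely many generators of the truncated ring $\bigoplus H^0(X,dD)$ have mobile parts that are basepoint free, and enlarge $d$ so that the ring $\bigoplus_m H^0(\widetilde X,\Mob f^*(mdD))$ is generated in degree one over the relevant monoid; then $\Mob f^*(kdD)=k\Mob f^*(dD)$ follows because the corresponding linear systems are basepoint free and their multiplication maps are surjective.

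The main obstacle I expect is statement (2): connecting \psef-ness to effectivity requires knowing that the ``negative part'' one would have to add to make a \psef\ divisor effective is controlled uniformly, and the clean way to see this is to approximate by big divisors inside $\Supp\mathfrak{R}$ and invoke the continuity and the boundedness of $o_\Gamma$ coming from the finitely many generators; one must be careful that the limit divisor genuinely lands in $\Supp\mathfrak{R}$ and that no base component is lost in the limit, which is exactly where the rational polyhedrality from (1) and the linearity from (3) are used. The remaining parts (1), (3), (4) are, by contrast, direct consequences of the structure theorem for finitely generated divisorial rings recalled in \cite[\S2.4]{CaL10} combined with Lemma \ref{d_sigma}.
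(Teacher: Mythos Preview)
The paper does not give its own proof of this statement: the entire argument is the single line ``This is essentially \cite[Theorem 4.1]{ELMNP}, see \cite[Theorem 3.6]{CoL10}.'' So there is no detailed proof here to compare against; the result is imported wholesale from the literature. Your sketch is broadly in line with how those references argue, and in particular your treatment of (1), (3), and (4) via the structure theory of finitely generated multigraded rings (choose homogeneous generators, pass to a resolution making their mobile parts free, then use additivity of $\Mob f^*(d\,\cdot\,)$ on chambers) is the standard route.

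One correction: you have misjudged where the difficulty lies. You flag (2) as ``the main obstacle,'' but once (1) is in hand it is the easiest of the four. If $D_0\in\Supp\mathfrak R$ is big and $D\in\mathcal C$ is pseudo-effective, then $D+\varepsilon D_0\in\mathcal C$ is big for every $\varepsilon>0$, hence $D+\varepsilon D_0\in\Supp\mathfrak R$; since $\Supp\mathfrak R$ is closed (being rational polyhedral by (1)), letting $\varepsilon\to0$ gives $D\in\Supp\mathfrak R$. No appeal to $o_\Gamma$, $N_\sigma$, or boundedness of fixed parts is needed, and your worry about ``no base component being lost in the limit'' does not arise. The genuine work in this theorem is concentrated in (3) and (4), where one must produce a single $d$ and a single resolution $f$ that work uniformly across all of $\Supp\mathfrak R\cap\Div(X)$; your sketch of that part is correct in outline but, like the paper itself, ultimately defers to the cited references for the details.
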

\begin{proof} 
This is essentially \cite[Theorem 4.1]{ELMNP}, see \cite[Theorem 3.6]{CoL10}.
\end{proof}

Part (i) of the following lemma is \cite[Lemma 3.8]{CoL10}. Part (ii) is a result of Zariski and Wilson, cf.\ \cite[Theorem 2.3.15]{Laz04}.

\begin{lem}\label{lem:ords}
Let $X$ be a normal projective variety and let $D$ be a divisor in $\Div_\Q(X)$.
\begin{enumerate}
\item[(i)] If $|D|_\Q\neq\emptyset$, then $D$ is semiample if and only if $R(X, D)$ is finitely generated and $o_{\Gamma}(D)=0$ for all geometric valuations $\Gamma$ over $X$.
\item[(ii)] If $D$ is nef and big, then $D$ is semiample if and only if $R(X, D)$ is finitely generated.
\end{enumerate}
\end{lem}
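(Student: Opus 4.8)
The plan is to treat the two parts separately, since part (ii) is a classical Zariski--Wilson statement that the authors explicitly attribute to the literature, while part (i) is the one that should be proved from the machinery already set up. For (i), the forward implication is immediate: if $D$ is semiample, then $R(X,D)$ is finitely generated by a standard argument (the section ring of a semiample divisor is finitely generated, since some multiple $mD$ is basepoint free and defines the semiample fibration $g\colon X\to Y$ with $mD=g^*A$ for $A$ ample, and $R(Y,A)$ is finitely generated), and moreover $o_\Gamma(D)=0$ for every geometric valuation $\Gamma$ over $X$ — indeed, after passing to a high enough model the mobile part of $f^*(mD)$ is already basepoint free, so the asymptotic order of vanishing along any $\Gamma$ vanishes.

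For the converse, assume $|D|_\Q\neq\emptyset$, that $R(X,D)$ is finitely generated, and that $o_\Gamma(D)=0$ for all geometric valuations $\Gamma$ over $X$. I would first invoke Theorem~\ref{thm:ELMNP}(iv) applied to the rational polyhedral cone $\mathcal C=\R_+D$: there is a positive integer $d$ and a resolution $f\colon\widetilde X\to X$ such that $\Mob f^*(kdD)=k\,\Mob f^*(dD)$ is basepoint free for all $k\geq 1$. The point is then to show that the fixed part of $f^*(dD)$ is zero. Writing $f^*(dD)=\Mob f^*(dD)+\Fix f^*(dD)$, every component $\Gamma$ of the fixed part would contribute $\mult_\Gamma f^*(kdD)\geq k\cdot(\text{coefficient})$ to the fixed part of every multiple, which forces $o_\Gamma(D)>0$ — contradicting the hypothesis. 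Hence $f^*(dD)=\Mob f^*(dD)$ is basepoint free on $\widetilde X$, so $f^*(dD)$ is semiample; since $f$ is birational and $f_*f^*(dD)=dD$, semiampleness descends to $X$ (the linear system $|f^*(mdD)|$ has no base points and is pulled back from $|mdD|$ for $m$ large, so $|mdD|$ is basepoint free), and therefore $D$ is semiample.

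The step I expect to be the main obstacle is the careful bookkeeping in the converse direction: one must be sure that the vanishing $o_\Gamma(D)=0$ for \emph{all} geometric valuations $\Gamma$ over $X$ — not just divisors on $X$ itself — is exactly what kills the fixed part \emph{on the chosen resolution} $\widetilde X$, and that basepoint-freeness upstairs genuinely descends to basepoint-freeness of a multiple of $D$ downstairs rather than merely to semiampleness of some auxiliary divisor. This is where the strength of Theorem~\ref{thm:ELMNP}(iv), which provides a single resolution working for all multiples simultaneously with the mobile part behaving linearly, is essential; without it one would only control each multiple on its own model. Part (ii) then requires no separate argument here beyond citing \cite[Theorem 2.3.15]{Laz04}: when $D$ is nef and big, finite generation of $R(X,D)$ already forces $o_\Gamma(D)=0$ for all $\Gamma$ (a nef and big divisor has no asymptotic base locus in codimension one after the Zariski decomposition collapses), so (ii) follows from (i).
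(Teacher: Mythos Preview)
Your treatment of (i) is essentially the paper's own argument. Both directions hinge on Theorem~\ref{thm:ELMNP}(4); your version --- passing to the resolution $\widetilde X$, writing $f^*(dD)=\Mob f^*(dD)+\Fix|f^*(dD)|$, and killing the fixed part because any component $\Gamma$ of it would force $o_\Gamma(D)>0$ --- is just a more explicit unpacking of the paper's one-liner ``if $o_x(D)=0$ then $x\notin\sB(D)$ by Theorem~\ref{thm:ELMNP}(4)''. The descent of basepoint-freeness from $\widetilde X$ to $X$ is fine since $H^0(X,mdD)\simeq H^0(\widetilde X,f^*(mdD))$.

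For (ii), your reduction to (i) has a gap. You write that ``finite generation of $R(X,D)$ already forces $o_\Gamma(D)=0$'', but finite generation plays no role in that vanishing, and the parenthetical about the Zariski decomposition collapsing is not an argument (at best it addresses prime divisors on $X$, not all geometric valuations over $X$). The paper supplies the missing step: pick an ample divisor $A$; then $D+\varepsilon A$ is ample for every $\varepsilon>0$, hence $o_\Gamma(D+\varepsilon A)=0$, and by continuity of $o_\Gamma$ on the big cone (Lemma~\ref{d_sigma}) one gets $o_\Gamma(D)=\lim_{\varepsilon\to 0}o_\Gamma(D+\varepsilon A)=0$ for every geometric valuation $\Gamma$, so (i) applies. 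Simply citing \cite[Theorem~2.3.15]{Laz04} is also acceptable --- the paper itself attributes (ii) to Zariski--Wilson --- but if you want the self-contained derivation from (i), the perturbation-plus-continuity argument is the piece you are missing.
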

\begin{proof} 
If $D$ is semiample, then some multiple of $D$ is basepoint free, thus $R(X,D)$ is finitely generated by Lemma \ref{lem:3}, and all $o_\Gamma(D)=0$. Now, fix a point $x\in X$. If $R(X,D)$ is finitely generated and $o_x(D)=0$, then $x\notin\sB(D)$ by Theorem \ref{thm:ELMNP}(4), which proves (i).

For (ii), let $A$ be an ample divisor. Then $D+\varepsilon A$ is ample for any $\varepsilon>0$, hence $o_\Gamma(D+\varepsilon A)=0$ for any geometric valuation $\Gamma$ over $X$. But then $o_\Gamma(D)=\lim\limits_{\varepsilon\to0}o_\Gamma(D+\varepsilon A)=0$ by Lemma \ref{d_sigma}, so we conclude by (i).
\end{proof}

\begin{cor}\label{cor:5}
Let $X$ be a normal projective variety and let $D_1, \dots, D_r$ be divisors in $\Div_\Q(X)$. Assume that the ring $\mathfrak R=R(X;D_1, \dots, D_r)$ is finitely generated, and let $\Supp \mathfrak R=\bigcup_{i=1}^N \mcal{C}_i$ be a finite rational polyhedral subdivision as in Theorem~\ref{thm:ELMNP}(3). Denote by $\pi\colon\Div_\R(X)\lto N^1(X)_\R$ the natural projection.

Then there is a set $I_1\subseteq\{1,\dots,N\}$ such that $\Supp\mathfrak R\cap \pi^{-1}\bigl(\overline{\Mov} (X)\bigr)=\bigcup_{i\in I_1}\mathcal C_i$.

Assume further that $\Supp\mathfrak R$ contains an ample divisor. Then there is a set $I_2\subseteq\{1,\dots,N\}$ such that the cone $\Supp\mathfrak R\cap \pi^{-1}\bigl(\Nef(X)\bigr)$ equals $\bigcup_{i\in I_2}\mathcal C_i$, and every element of this cone is semiample.
\end{cor}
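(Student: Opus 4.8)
The plan is to show that each of the two sets
$$\Supp\mathfrak R\cap\pi^{-1}\big(\overline{\Mov}(X)\big)\qquad\text{and}\qquad\Supp\mathfrak R\cap\pi^{-1}\big(\Nef(X)\big)$$
meets every cone $\mathcal C_i$ of the subdivision in a \emph{face} of $\mathcal C_i$. Since the subdivision is a fan, every face of a $\mathcal C_i$ is again one of the cones $\mathcal C_j$, and since $\Supp\mathfrak R\cap\pi^{-1}(\mathcal K)=\bigcup_i\big(\mathcal C_i\cap\pi^{-1}(\mathcal K)\big)$ for any cone $\mathcal K\subseteq N^1(X)_\R$, this yields the asserted decomposition with $I_1$ (resp.\ $I_2$) the indices of the faces that occur. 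The reason the intersections are faces is that asymptotic order functions provide the right linear coordinates: by Theorem~\ref{thm:ELMNP}(3) every $o_\Gamma$ is linear on each $\mathcal C_i$, and as $o_\Gamma\geq0$ the set $\{D\in\mathcal C_i\mid o_\Gamma(D)=0\}$ is a face of $\mathcal C_i$; since $\mathcal C_i$ has finitely many faces, the same holds for the intersection of any family of such sets. I will also use the resolution $f\colon\widetilde X\to X$ and the integer $d$ from Theorem~\ref{thm:ELMNP}(4): for $D\in\Supp\mathfrak R$ write $f^*D=M_f(D)+N_f(D)$, where $N_f(D)=\tfrac1d\Fix f^*(dD)\geq0$ is extended by linearity on each $\mathcal C_i$, so that $M_f(D)$ is semiample for $\Q$-divisors $D\in\Supp\mathfrak R$ and $\mult_{\widetilde\Gamma}N_f(D)=o_\Gamma(D)$ whenever $\widetilde\Gamma$ is the strict transform of a prime divisor $\Gamma$ on $X$; only finitely many prime $\widetilde\Gamma$ on $\widetilde X$ satisfy $\mult_{\widetilde\Gamma}N_f\not\equiv0$.

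For the nef chamber the heart of the matter is the identity
$$\Supp\mathfrak R\cap\pi^{-1}\big(\Nef(X)\big)=\{D\in\Supp\mathfrak R\mid D\text{ is semiample}\}.$$
Granting this, on each $\mathcal C_i$ a $\Q$-divisor is semiample if and only if $N_f(D)=0$ (Theorem~\ref{thm:ELMNP}(4)), i.e.\ $\mult_{\widetilde\Gamma}N_f(D)=0$ for the finitely many relevant $\widetilde\Gamma$, which is a finite intersection of zero sets of nonnegative linear functionals, hence a rational face; and semiampleness of every element of the chamber is built in. The inclusion ``$\supseteq$'' is clear since semiample divisors are nef. For ``$\subseteq$'', let $A\in\Supp\mathfrak R$ be the ample divisor and $D\in\Supp\mathfrak R$ with $\pi(D)\in\Nef(X)$; for small rational $\varepsilon>0$ the divisor $D+\varepsilon A$ lies in $\Supp\mathfrak R$, is nef and big with finitely generated section ring, hence is semiample by Lemma~\ref{lem:ords}(ii), so $o_\Gamma(D+\varepsilon A)=0$ for every geometric valuation $\Gamma$ over $X$; letting $\varepsilon\to0$ and using that $o_\Gamma$, being linear on each cone of the fan, is continuous on $\Supp\mathfrak R$, we obtain $o_\Gamma(D)=0$ for all such $\Gamma$, and $D$ is semiample by Lemma~\ref{lem:ords}(i) (the passage from $\Q$- to $\R$-divisors being handled by the rational face structure just obtained).

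For the movable chamber the analogous identity is
$$\Supp\mathfrak R\cap\pi^{-1}\big(\overline{\Mov}(X)\big)=\{D\in\Supp\mathfrak R\mid o_\Gamma(D)=0\text{ for every prime divisor }\Gamma\text{ on }X\},$$
whose right-hand side meets each $\mathcal C_i$ in a face exactly as above. For ``$\supseteq$'': if $o_\Gamma(D)=0$ for all prime $\Gamma$ on $X$, then $N_f(D)$ is $f$-exceptional, so $D=f_*M_f(D)$ is the pushforward of a semiample, hence movable, divisor, and therefore movable; one passes to $\R$-divisors in the relevant face by density and the fact that $\overline{\Mov}(X)$ is closed. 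For ``$\subseteq$'', when $D$ is big this is Lemma~\ref{d_sigma}: $\pi(D)\in\overline{\Mov}(X)$ forces $N_\sigma(D)=0$, i.e.\ $o_\Gamma(D)=0$ for all prime $\Gamma$ on $X$. In general one perturbs by an ample divisor $A$ on $X$: then $D+\varepsilon A$ is big and $\pi(D+\varepsilon A)\in\overline{\Mov}(X)$ because $\pi(A)$ is movable, so $o_\Gamma(D+\varepsilon A)=0$ for all prime $\Gamma$ on $X$, and one lets $\varepsilon\to0$.

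I expect the main obstacle to be precisely this last limiting step for non-big $D$ in the movable case: a priori $o_\Gamma$ is only known to be continuous on the big cone and, separately, on $\Supp\mathfrak R$, while the perturbation $D+\varepsilon A$ leaves $\Supp\mathfrak R$, so one cannot simply pass to the limit. Reconciling the two requires exploiting the piecewise-linear structure coming from finite generation (Theorem~\ref{thm:ELMNP}(3),(4)) together with Lemma~\ref{d_sigma}, and this is where the finite generation hypothesis is genuinely used; if $\Supp\mathfrak R$ happened to contain an ample divisor one could perturb within $\Supp\mathfrak R$ and the step would be immediate, exactly as in the nef case. The remaining ingredients — that $R(X,D)$ is finitely generated for $D\in\Supp\mathfrak R$, that the pushforward of a movable divisor is movable, and the routine $\Q$-to-$\R$ passages — are standard, and I would quote them without detail.
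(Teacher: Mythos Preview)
Your approach is essentially the paper's: both identify $\Supp\mathfrak R\cap\pi^{-1}\bigl(\overline{\Mov}(X)\bigr)$ with the common zero locus $\bigcap_\Gamma\{D\in\Supp\mathfrak R\mid o_\Gamma(D)=0\}$ over prime divisors $\Gamma$ on $X$, use that each $o_\Gamma$ is linear and nonnegative on every $\mathcal C_i$ so that this locus is a union of the $\mathcal C_i$, and handle the nef chamber by perturbing towards the given ample divisor inside $\Supp\mathfrak R$ and invoking Lemma~\ref{lem:ords}. Your write-up is more explicit about the fan/face mechanism and about the resolution $f$, but the skeleton is identical.

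The obstacle you single out in the movable case for non-big $D$ is real, and the paper does not treat it any more carefully than you do: its entire justification is the clause ``the first claim follows since $\overline{\Mov}(X)$ is the intersection of all $\mathcal C_\Gamma$'', which is exactly the identity you question. So you are not missing an argument that the paper supplies. In every place the paper actually uses this corollary (Theorems~\ref{thm:CY}, \ref{lem:nullflip}, \ref{thm:scalingbig} and Corollary~\ref{cor:MDS}) either all divisors in $\Supp\mathfrak R$ are big or $\Supp\mathfrak R$ contains an ample divisor, so your suggested perturbation can be carried out inside $\Supp\mathfrak R$ and the limiting step goes through; your proof is complete in those cases, which are the only ones needed.
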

\begin{proof}
For every prime divisor $\Gamma$ on $X$ denote $\mathcal C_\Gamma=\{D\in\Supp\mathfrak R\mid o_\Gamma(D)=0\}$. If $\mcal C_\Gamma$ intersects the interior of some $\mcal C_\ell$, then $\mcal C_\ell\subseteq\mcal C_\Gamma$ since $o_\Gamma$ is a linear non-negative function on $\mcal C_\ell$. Therefore, there exists a set $I_\Gamma \subseteq\{1,\dots,N\}$ such that $\mathcal C_\Gamma=\bigcup_{i\in I_\Gamma}\mathcal C_i$. Now the first claim follows since $\overline{\Mov}(X)$ is the intersection of all $\mathcal C_\Gamma$.

For the second claim, note that since $\Supp\mathfrak R\cap \pi^{-1}\bigl(\Nef(X)\bigr)$ is a cone of dimension $\dim\Supp\mathfrak R$, we can consider only maximal dimensional cones $\mcal C_\ell$. Now, for every $\mcal C_\ell$ whose interior contains an ample divisor, all asymptotic order functions $o_\Gamma$ are identically zero on $\mcal{C}_\ell$ similarly as above. Therefore, by Lemma \ref{lem:ords}, every element of $\mcal{C}_\ell$ is semiample, and thus $\mcal{C}_\ell\subseteq \Supp\mathfrak{R}\cap\pi^{-1} \bigl(\Nef(X)\bigr)$. The claim follows.
\end{proof}

\subsection*{Examples of finitely generated rings}
The following is a small variation of the main result of \cite{CaL10}, where it is proved by a self-contained argument avoiding the techniques of the MMP. It was first proved in the seminal paper \cite{BCHM} by MMP methods.

\begin{thm}\label{thmA}
Let $X$ be a $\Q$-factorial projective variety, and let $\Delta_1,\dots,\Delta_r$ be $\Q$-divisors such that all pairs $(X,\Delta_i)$ are klt.
\begin{enumerate}
\item If $A_1,\dots,A_r$ are ample $\Q$-divisors, then the adjoint ring
\[
R(X; K_X+\Delta_1+A_1, \dots, K_X+\Delta_r+A_r)
\]
is finitely generated.
\item If $\Delta_i$ are big, then the adjoint ring
\[
R(X;K_X+\Delta_1,\dots,K_X+\Delta_r)
\]
is finitely generated.
\end{enumerate}
\end{thm}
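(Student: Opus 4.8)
The plan is to reduce both parts to the single statement that $R(X;K_X+B_1,\dots,K_X+B_r)$ is finitely generated whenever $X$ is $\Q$-factorial projective and each $(X,B_i)$ is klt with $B_i$ big, and then to prove that by induction on $\dim X$. For part (1) I would replace each ample $A_i$ by a general member $A_i'\sim_\Q A_i$ with $(X,\Delta_i+A_i')$ klt; then $\Delta_i+A_i'$ is big and the divisorial ring is unchanged, since $\Q$-linearly equivalent generators yield isomorphic divisorial rings. For part (2), writing $\Delta_i\sim_\Q A_i''+G_i$ with $A_i''$ ample and $G_i\ge0$ and replacing $\Delta_i$ by the still-klt boundary $(1-\varepsilon_i)\Delta_i+\varepsilon_i G_i$ for small $\varepsilon_i>0$ exhibits $K_X+\Delta_i$ as an adjoint divisor of the shape handled in part (1). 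Throughout, Lemma~\ref{lem:3} lets me rescale generators freely.

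Next I would pass to a standard birational model. Taking a log resolution $f\colon Y\to X$ of $(X,\sum B_i)$ and writing $K_Y+B_i^Y=f^*(K_X+B_i)+F_i$ with $F_i\ge0$ $f$-exceptional, $(Y,B_i^Y)$ klt and $\Supp\sum B_i^Y$ snc, one gets $R(X;K_X+B_1,\dots)\cong R(Y;K_Y+B_1^Y,\dots)$ because $F_i$ is effective and exceptional. After one more small perturbation absorbing the non-ample part of the pullback of each ample summand into the boundary, I may assume from the start that $X$ is smooth, $\sum B_i$ has snc support, and each $B_i$ retains an ample $\Q$-divisor in its support decomposition. This is the ``standard situation'' in which the induction is run, on the multigraded ring directly.

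The inductive step proceeds together with a non-vanishing statement and an extension statement for restrictions to a boundary component; the base case $\dim X\le1$ is classical. After rescaling the generators — and a Diophantine-approximation argument to pass between $\Q$- and $\R$-coefficients — I would arrange that some $B_i$ has a component $S$ of coefficient exactly $1$. Adjunction $(K_X+B_i)|_S=K_S+B_{i,S}$ realises the restriction of $\mathfrak R$ to $S$ as a subring of an adjoint ring on $S$ with klt, big boundary; this restricted ring is finitely generated by the inductive hypothesis in dimension $\dim X-1$. An extension theorem of Hacon--McKernan type shows the restriction map $\mathfrak R\to\operatorname{res}_S\mathfrak R$ is surjective in high enough degree, and combined with the non-vanishing input this propagates finite generation from $S$ to $X$; in the complementary case, where all non-ample components of the $B_i$ have coefficient $<1$, I would deduce finite generation of $\mathfrak R$ from the ``enlarged'' ring obtained by pushing those coefficients up to $1$ (already treated), using the ample summand of each $B_i$ to control $N_\sigma$ by a basepoint-free argument. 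Alternatively, one can bypass the induction and invoke the existence and finiteness of log terminal models from \cite{BCHM}: in the standard situation the pseudo-effective locus of $\sum\R_+(K_X+B_i)$ decomposes into finitely many rational polyhedral chambers, each carrying a fixed good minimal model on which the relevant divisors become semiample, and assembling these models gives finite generation of $\mathfrak R$.

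I expect the main obstacle to be the inductive package itself — organising the dimension-$n$ statements (finite generation, non-vanishing, restriction/extension) so that each follows from the lower-dimensional ones and from one another without circularity — and, buried inside it, the extension theorem: proving that sections of $m(K_X+B_i)|_S$ lift to $X$. This is the one genuinely hard external input, requiring Siu-type techniques in the Hacon--McKernan packaging (or, as in \cite{BCHM}, the full MMP with scaling); the self-contained approach of \cite{CaL10} is designed precisely to isolate this ingredient while deriving the rest by induction.
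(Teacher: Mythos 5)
Your outline matches the paper's approach: the paper itself gives no argument for this theorem but simply cites \cite[Theorem 3.2]{CoL10}, i.e.\ the main result of \cite{CaL10}, whose proof is exactly the scheme you describe (the mutual reduction between (1) and (2) via perturbing the boundary and $\Q$-linear equivalence, passing to a log resolution, and the induction on dimension running finite generation together with non-vanishing and a Hacon--McKernan-type lifting statement, with the BCHM route as the MMP alternative). So your proposal is a correct sketch of the cited argument rather than a different route, with the extension theorem rightly identified as the essential hard input that is not reproved here.
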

\begin{proof} 
See \cite[Theorem 3.2]{CoL10}.
\end{proof}

\begin{cor}\label{cor:effective}
Let $X$ be a projective $\Q$-factorial variety, let $S_1,\dots,S_p$ be distinct prime divisors on $X$, denote $V=\sum_{i=1}^p\mathbb R S_i\subseteq \Div_{\mathbb R}(X)$, and let $A$ be an ample $\mathbb Q$-divisor on $X$. Let $\mcal C\subseteq\mcal L(V)$ be a rational polytope such that for every $\Delta\in \mcal C$, the pair $(X,\Delta)$ is klt.

Then the set $\mcal C\cap\mathcal E_A(V)$ is a rational polytope, and the ring $R(X,\R_+(K_X+A+\mcal C\cap\mathcal E_A(V)))$ is finitely generated.
\end{cor}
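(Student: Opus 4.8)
The plan is to realise $\mcal C\cap\mathcal E_A(V)$ as a bounded slice of the support of an adjoint ring which is finitely generated by Theorem~\ref{thmA}(1), and then to read off both assertions from the structure result Theorem~\ref{thm:ELMNP}(1). Throughout, recall that since $X$ is $\Q$-factorial, $K_X$ is $\Q$-Cartier, so $K_X+A\in\Div_\Q(X)$.

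First I would write the rational polytope $\mcal C$ as the convex hull of finitely many rational points $\Delta_1,\dots,\Delta_r\in\mcal C$; each pair $(X,\Delta_i)$ is klt by hypothesis. Applying Theorem~\ref{thmA}(1) with $A_1=\dots=A_r=A$, the adjoint ring $\mathfrak R=R(X;K_X+A+\Delta_1,\dots,K_X+A+\Delta_r)$ is finitely generated, so $\Supp\mathfrak R$ is a rational polyhedral cone by Theorem~\ref{thm:ELMNP}(1). The key point is the slicing identity
$$(K_X+A)+\big(\mcal C\cap\mathcal E_A(V)\big)=\big((K_X+A)+\mcal C\big)\cap\Supp\mathfrak R .$$
To see this, note that for $\Delta\in\mcal C$, writing $\Delta=\sum\lambda_i\Delta_i$ with $\lambda_i\ge 0$ and $\sum\lambda_i=1$ gives $K_X+A+\Delta=\sum\lambda_i(K_X+A+\Delta_i)\in\sum\R_+(K_X+A+\Delta_i)$; hence $K_X+A+\Delta$ lies in $\Supp\mathfrak R$ exactly when $|K_X+A+\Delta|_\R\neq\emptyset$, and since $\mcal C\subseteq\mathcal L(V)$ this happens precisely when $\Delta\in\mathcal E_A(V)$. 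Granting the identity, $\mcal C\cap\mathcal E_A(V)$ is a translate of the intersection of the rational polytope $(K_X+A)+\mcal C$ with the rational polyhedral cone $\Supp\mathfrak R$, hence a rational polytope.

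For the finite generation assertion I may assume $\mcal C\cap\mathcal E_A(V)\neq\emptyset$, and I would let $\Theta_1,\dots,\Theta_s$ be its vertices; these are rational, and each $(X,\Theta_j)$ is klt since $\Theta_j\in\mcal C$. By Theorem~\ref{thmA}(1) once more, $R(X;K_X+A+\Theta_1,\dots,K_X+A+\Theta_s)$ is finitely generated, and since $\R_+\big(K_X+A+\mcal C\cap\mathcal E_A(V)\big)=\sum\R_+(K_X+A+\Theta_j)$ as cones, I would conclude that $R\big(X,\R_+(K_X+A+\mcal C\cap\mathcal E_A(V))\big)$ is finitely generated by the standard properties of divisorial rings, cf.\ \cite[\S2.4]{CaL10}. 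The substantive input — finite generation of the adjoint rings $R(X;K_X+A+\Delta_i)$ and rationality of their supports — is entirely outsourced to Theorems~\ref{thmA} and~\ref{thm:ELMNP}; the only step needing genuine care is the bookkeeping in the slicing identity, in particular that $(K_X+A)+\mcal C$ sits inside the positive span $\sum\R_+(K_X+A+\Delta_i)$, so that intersecting with $\Supp\mathfrak R$ imposes exactly the condition $|\,\cdot\,|_\R\neq\emptyset$.
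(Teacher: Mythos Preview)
Your proposal is correct and follows essentially the same approach as the paper: the paper takes the vertices $B_1,\dots,B_r$ of $\mcal C$, applies Theorem~\ref{thmA} to obtain the finitely generated ring $\mathfrak R=R(X;K_X+B_1+A,\dots,K_X+B_r+A)$, observes that $\Supp\mathfrak R=\R_+(K_X+A+\mcal C\cap\mcal E_A(V))$, and concludes via Theorem~\ref{thm:ELMNP}(1). Your slicing identity is exactly the affine version of the paper's identification of $\Supp\mathfrak R$; the only cosmetic difference is that you reapply Theorem~\ref{thmA} to the vertices $\Theta_j$ of $\mcal C\cap\mcal E_A(V)$ for the finite generation claim, whereas the paper reads both conclusions off directly from the already-established finite generation of $\mathfrak R$ and the identification of its support.
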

\begin{proof}
Let $B_1,\dots,B_r$ be the vertices of $\mcal C$. Then the ring $\mathfrak R=R(X;K_X+B_1+A,\dots,K_X+B_r+A)$ is finitely generated by Theorem \ref{thmA}, and we have $\Supp\mathfrak R=\R_+(K_X+A+\mcal C\cap\mcal E_A(V))$. Now the result follows from Theorem \ref{thm:ELMNP}(1).
\end{proof}

The following corollary is well known.

\begin{cor}\label{cor:CY finitely generated}
Let $X$ be a projective $\Q$-factorial variety of Calabi-Yau type, and let $B_1,\dots,B_q$ be big $\Q$-divisors on $X$. Then the ring $R(X;B_1,\dots,B_q)$ is finitely generated. 
\end{cor}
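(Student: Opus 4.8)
The plan is to reduce the statement to the finite generation of adjoint rings, i.e.\ to Theorem~\ref{thmA}, using the Calabi--Yau structure to turn the $B_i$ into adjoint divisors. I would start with two harmless normalisations: finite generation of a divisorial ring is unaffected if one rescales the generators by positive rationals (Lemma~\ref{lem:3}) or replaces them by $\Q$-linearly equivalent divisors (a standard property of divisorial rings, \cite[\S 2.4]{CaL10}), so it suffices to prove the claim after such operations. Since each $B_i$ is big, some multiple $m_iB_i$ is linearly equivalent to an effective divisor $G_i\geq0$; replacing $B_i$ by $G_i$, I may assume that every $B_i$ is an effective big $\Q$-divisor. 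Finally, as $X$ is of Calabi--Yau type, fix $\Delta\geq0$ with $(X,\Delta)$ klt and $K_X+\Delta\equiv0$.

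The one non-formal ingredient is the upgrade $K_X+\Delta\sim_\Q0$. Since $K_X+\Delta$ is numerically trivial, its numerical dimension vanishes, and so the abundance theorem in numerical dimension zero for klt pairs (see \cite{Nak04}) gives $\kappa(X,K_X+\Delta)=0$; a suitable multiple of $K_X+\Delta$ is then linearly equivalent to an effective divisor, which must be zero because it is numerically trivial (intersect it with a power of an ample class). Hence $K_X+\Delta\sim_\Q0$. I expect this passage from numerical to $\Q$-linear triviality to be the crux of the proof: without it, the construction below would only produce a divisorial ring whose generators are numerically, but not linearly, equivalent to the $B_i$, and finite generation is not invariant under numerical equivalence.

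To conclude, choose a positive integer $N$ so large that all pairs $\bigl(X,\Delta+\tfrac1N B_i\bigr)$ are klt, which is possible because $(X,\Delta)$ is klt and being klt is an open condition on the coefficients. Replacing each $B_i$ by $\tfrac1N B_i$ (Lemma~\ref{lem:3} once more), and using $K_X+\Delta\sim_\Q0$, I obtain
$$B_i\sim_\Q(K_X+\Delta)+B_i=K_X+(\Delta+B_i),$$
where $(X,\Delta+B_i)$ is klt and $\Delta+B_i$ is big since $B_i$ is. By Theorem~\ref{thmA}(2) the adjoint ring $R\bigl(X;K_X+(\Delta+B_1),\dots,K_X+(\Delta+B_q)\bigr)$ is finitely generated, and hence so is $R(X;B_1,\dots,B_q)$ by the normalisations above. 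One could equally write a big $B_i$ as an ample $\Q$-divisor plus an effective one and finish using Theorem~\ref{thmA}(1) instead.
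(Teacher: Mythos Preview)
Your argument is correct, but it imports a nontrivial external result that the paper's proof avoids entirely. You identify the ``crux'' as upgrading $K_X+\Delta\equiv 0$ to $K_X+\Delta\sim_\Q 0$, and you appeal to abundance in numerical dimension zero for klt pairs to do this. That result is true, though the reference to \cite{Nak04} is not quite on the nose for klt pairs with boundary; in any case it is a genuine theorem rather than a formality.

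The paper sidesteps this completely. Writing $B_i=A_i+E_i$ with $A_i$ ample and $E_i\geq0$, one sets $A_i'=\varepsilon B_i-(K_X+\Delta+\varepsilon E_i)$ for small rational $\varepsilon>0$. Since $K_X+\Delta\equiv 0$, one has $A_i'\equiv\varepsilon A_i$; ampleness is a \emph{numerical} property, so $A_i'$ is ample. Crucially, $K_X+(\Delta+\varepsilon E_i)+A_i'=\varepsilon B_i$ is a literal equality of divisors, not merely a $\Q$-linear equivalence, so Theorem~\ref{thmA}(1) applies directly and no abundance-type input is needed. Your closing remark that one ``could equally'' use the decomposition $B_i=A_i+E_i$ and Theorem~\ref{thmA}(1) is in fact the paper's route --- but its real advantage, which you did not notice, is precisely that it renders the passage from $\equiv$ to $\sim_\Q$ unnecessary.
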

\begin{proof}
Let $\Delta\geq0$ be a $\Q$-divisor such that $(X,\Delta)$ is klt and $K_X+\Delta\equiv 0$, and write $B_i=A_i+E_i$, where each $A_i$ is ample and $E_i\geq0$. Let $\varepsilon>0$ be a rational number such that all pairs $(X,\Delta+\varepsilon E_i)$ are klt, and denote $A_i'=\varepsilon B_i-(K_X+\Delta+\varepsilon E_i)$. Then each $A_i'$ is ample since $A_i'\equiv\varepsilon A_i$, hence the adjoint ring
$$R(X;K_X+\Delta+\varepsilon E_1+A_1',\dots,K_X+\Delta+\varepsilon E_q+A_q')= R(X;\varepsilon B_1,\dots, \varepsilon B_q )$$ 
is finitely generated by Theorem \ref{thmA}. Therefore $R(X;B_1,\dots,B_q)$ is finitely generated by Lemma \ref{lem:3}.
\end{proof}

Part (1) of the following theorem was proved in \cite{Kaw88}, while part (2) is a generalisation of the analogous result for $3$-folds proved in \cite{Kaw97}.

\begin{thm}\label{thm:CY}
Let $X$ be a projective $\Q$-factorial variety of Calabi-Yau type.
\begin{enumerate}
\item The cone $\Nef(X)$ is locally rational polyhedral in $\B(X)$, and moreover, every element of $\Nef(X)\cap\B(X)$ is semiample.
\item The cone $\overline{\Mov}(X)$ is locally rational polyhedral in $\B(X)$.
\end{enumerate} 
\end{thm}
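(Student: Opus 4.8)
\textbf{Proof plan for Theorem \ref{thm:CY}.}
The plan is to deduce both statements from the finite generation results of the previous subsection, applied to suitable rings of big divisors on $X$, combined with the structure theory of Corollary \ref{cor:5}. Fix a point $D_0$ in the interior of $\B(X)$, say an ample $\Q$-divisor; I want to show that both $\Nef(X)$ and $\overline{\Mov}(X)$ are rational polyhedral in a neighbourhood of $D_0$ inside $\B(X)$, and that nef big divisors are semiample. For the local statement it suffices to work inside a fixed rational polytope $\mcal P\subseteq\B(X)$ which is a neighbourhood of $D_0$ in $N^1(X)_\R$; choose big $\Q$-divisors $B_1,\dots,B_q$ whose classes are the vertices of $\mcal P$. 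By Corollary \ref{cor:CY finitely generated} the ring $\mathfrak R=R(X;B_1,\dots,B_q)$ is finitely generated, and since $\mcal P$ consists of big classes we have $\Supp\mathfrak R=\sum\R_+B_i$ by Theorem \ref{thm:ELMNP}(2) (all pseudo-effective classes in the span are effective), so up to the projection $\pi\colon\Div_\R(X)\lto N^1(X)_\R$ the cone over $\mcal P$ is exactly $\Supp\mathfrak R$.

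Now apply Theorem \ref{thm:ELMNP}(3) to get a finite rational polyhedral fan decomposition $\Supp\mathfrak R=\bigcup\mcal C_i$ on which every $o_\Gamma$ is linear, and then Corollary \ref{cor:5}: since $\Supp\mathfrak R$ contains the ample divisor $D_0$, there is a subset $I_2$ with $\Supp\mathfrak R\cap\pi^{-1}(\Nef(X))=\bigcup_{i\in I_2}\mcal C_i$, and every element of this cone is semiample. Intersecting with the cone over $\mcal P$ and pushing down by $\pi$ (which is injective in the relevant directions since we may take $D_1,\dots,D_r$ spanning, or simply argue with numerical classes throughout as in Lemma \ref{d_sigma}) shows that $\Nef(X)$ agrees with a finite union of rational polyhedral cones in a neighbourhood of $D_0$, i.e.\ $\Nef(X)$ is locally rational polyhedral in $\B(X)$; and the semiampleness assertion in (1) is exactly the last clause of Corollary \ref{cor:5}, since any big nef class lies in the interior of $\Nef(X)\cap\B(X)$ and hence in some maximal $\mcal C_\ell$ whose interior meets $\Amp(X)$. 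Similarly, the first part of Corollary \ref{cor:5} produces $I_1$ with $\Supp\mathfrak R\cap\pi^{-1}(\overline{\Mov}(X))=\bigcup_{i\in I_1}\mcal C_i$, which gives that $\overline{\Mov}(X)$ is locally rational polyhedral in $\B(X)$, proving (2).

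The one point requiring care is the passage between $\Div_\R(X)$ and $N^1(X)_\R$: $\mathfrak R$ and its support live in $\Div_\R(X)$, whereas $\Nef(X)$, $\overline{\Mov}(X)$, $\B(X)$ are cones in $N^1(X)_\R$. The cleanest fix is to note that by Lemma \ref{d_sigma} the functions $o_\Gamma$ on big divisors factor through numerical classes, so the whole analysis descends to $N^1(X)_\R$; concretely, pick $B_1,\dots,B_q$ so that their classes not only span $\mcal P$ but span $N^1(X)_\R$, or restrict attention to an affine slice. I expect this bookkeeping, rather than any substantive geometric input, to be the main obstacle — the real content (finite generation of rings of big divisors in the Calabi-Yau type setting, and the resulting local polyhedrality) has already been packaged into Corollary \ref{cor:CY finitely generated} and Corollary \ref{cor:5}. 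One should also observe at the outset that, since these are local statements around an arbitrary interior point of $\B(X)$, it genuinely suffices to treat one rational polytope neighbourhood at a time, which is what makes the finitely generated ring $\mathfrak R$ available.
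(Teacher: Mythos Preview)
Your approach is essentially the same as the paper's: both arguments reduce to choosing finitely many big $\Q$-divisors whose span covers the region of interest, applying Corollary~\ref{cor:CY finitely generated} to get finite generation, and then invoking Corollary~\ref{cor:5} to conclude rational polyhedrality and semiampleness. The paper phrases this slightly differently, working with a relatively compact subset $V$ of the boundary of $\overline{\Mov}(X)\cap\B(X)$ rather than a polytope neighbourhood of a point, but the content is identical.

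One small correction: your sentence ``any big nef class lies in the interior of $\Nef(X)\cap\B(X)$'' is false --- a big class on $\partial\Nef(X)$ is nef and big but not in that interior. Fortunately this claim is unnecessary: Corollary~\ref{cor:5} already asserts that \emph{every} element of $\Supp\mathfrak R\cap\pi^{-1}(\Nef(X))$ is semiample, not just interior ones, so you can simply delete that clause. You should also make sure, when arguing semiampleness for a given nef big $D$, that the polytope $\mcal P$ you build around $D$ actually contains an ample class (needed for the second part of Corollary~\ref{cor:5}); this is easy to arrange by including an ample divisor among the $B_i$.
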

\begin{proof}
We prove the result for $\overline{\Mov}(X)$; the case of $\Nef(X)$ is analogous. 

Let $V$ be a relatively compact subset of the boundary of $\overline{\Mov}(X)\cap\B(X)$, and denote by $\pi\colon \Div_\R(X)\lto N^1(X)_\R$ the natural projection. Then we can choose finitely many big $\mathbb Q$-divisors $B_1,\dots,B_q$ such
that $V\subseteq\pi(\sum_{i=1}^q\mathbb R_+B_i)$. Corollary~\ref{cor:CY finitely generated} implies that the ring  $\mathfrak R=R(X;B_1,\dots,B_q)$ is finitely generated, and hence $\pi^{-1}\big(\overline{\Mov}(X)\big)\cap \Supp\mathfrak R$ is a rational polyhedral cone by Corollary \ref{cor:5}. But then $V$ is contained in finitely many rational hyperplanes. 
\end{proof}

\begin{rem}
The proof of Theorem \ref{thm:CY} shows existence of a locally polyhedral decomposition of the big cone on a variety $X$ of Calabi-Yau type, which comes from the behaviour of asymptotic valuations on this cone. This is a consequence of the finite generation of any divisorial ring which is supported on $\B(X)$. On the other hand, it was shown in \cite{BKS} that an analogous decomposition exists on any smooth surface, and it is not a consequence of finite generation. It is an interesting problem to establish in which contexts that result generalises to higher dimensions. 
\end{rem}

\subsection*{Relations to the stable base locus}

As Example \ref{exa1} shows, the stable base locus and finite generation of section rings are not, in general, numerical invariants. However, we prove in Lemma \ref{lem:equal proj} that under some finite generation hypotheses, the stable base loci of numerically equivalent big divisors coincide. 

\begin{exa}\label{exa1}
We recall \cite[Example 10.3.3]{Laz04}. Let $B$ be a smooth elliptic curve, and let $A$ be an ample divisor of degree $1$ on $B$. Let $X=\PS(\OO_B\oplus \OO_B(A))$ be a projective bundle with the natural map $p \colon X\lto B$. Let $P_1$ be a torsion divisor on $B$, let $P_2$ be a non-torsion degree $0$ divisor on $B$, and consider $L_i=  \OO_X(1)\otimes p^*\OO_{B}(P_i)$. Then $L_1$ and $L_2$ are numerically equivalent nef and big line bundles
with $\emptyset=\sB(L_1)\neq \sB(L_2)$, and $R(X, L_1)$ is finitely generated while $R(X, L_2)$ is not by Lemma \ref{lem:ords}(2).
\end{exa}

\begin{lem}\label{lem:equal proj}
Let $X$ be a $\Q$-factorial projective variety, and let $D_1$ and $D_2$ be big $\Q$-divisors such that $D_1\equiv D_2$. Assume that the rings $R(X,D_i)$ are finitely generated, and consider the maps $\vphi_i\colon X\dashto \Proj R(X,D_i)$.

Then we have $\sB(D_1)=\sB(D_2)$, and there is an isomorphism $\eta\colon \Proj R(X,D_1)\lto \Proj R(X,D_2)$ such that $\vphi_2=\eta\circ\vphi_1$.
\end{lem}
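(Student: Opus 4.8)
The statement says that for big $\Q$-divisors $D_1\equiv D_2$ with finitely generated section rings, the stable base loci coincide and the two Proj's are canonically isomorphic. My plan is to reduce everything to the asymptotic order functions $o_\Gamma$, using the key fact from Lemma \ref{d_sigma} that $o_\Gamma(D)$ depends only on the numerical class of $D$ when $D$ is big. First I would observe that finite generation of $R(X,D_i)$ together with Theorem \ref{thm:ELMNP}(4) gives, for each $i$, a resolution on which $\Mob$ of a suitable multiple of the pullback is basepoint free; but it is cleaner to argue directly with stable base loci: for a big divisor $D$ with $R(X,D)$ finitely generated, a point $x$ lies in $\sB(D)$ if and only if $o_x(D)>0$, where $o_x$ is the asymptotic order at the valuation of a blowup of $x$ (this is exactly the mechanism used in the proof of Lemma \ref{lem:ords}(i), via Theorem \ref{thm:ELMNP}(4)). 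Since $o_x(D_1)=o_x(D_2)$ for every such valuation by Lemma \ref{d_sigma}, we get $\sB(D_1)=\sB(D_2)$ immediately.

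\textbf{The isomorphism.} For the second part, after scaling by Lemma \ref{lem:3} I may assume $D_1,D_2$ are integral. Write $N_i=N_\sigma(D_i)$ and $P_i=P_\sigma(D_i)$; by Lemma \ref{d_sigma} and numerical equivalence, $N_1=N_2=:N$ and hence $P_1\equiv P_2$, and $P_i$ is movable. The point is that $\vphi_i\colon X\dashto X_i:=\Proj R(X,D_i)$ factors as the $D_i$-MMP followed by the semiample fibration; more concretely, there is a birational contraction (the one extracting/contracting the components of $N$ and performing the necessary flips) $g_i\colon X\dashto Y_i$ which is $D_i$-nonpositive with $(g_i)_*D_i$ semiample, and $X_i=\Proj R(Y_i,(g_i)_*D_i)$. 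The cleanest route, however, is to use Remark \ref{rem:1}(i)--(ii): finite generation gives $R(X,D_i)$ finitely generated, and I claim $R(X,D_1)\cong R(X,D_2)$ as graded rings compatibly with the inclusion into $k(X)[t]$. Indeed, for each $m$, Theorem \ref{thm:ELMNP}(4) (applied to the cone $\R_+D_1+\R_+D_2$, whose ring is finitely generated — here one must check that $R(X;D_1,D_2)$ is finitely generated, which follows because $D_1\equiv D_2$ and finite generation of a divisorial ring is governed by the $o_\Gamma$, or alternatively since $D_1+D_2$ is big and both rings are f.g.) shows that on a common resolution $f\colon\widetilde X\to X$ we have $\Mob f^*(mdD_1)=\Mob f^*(mdD_2)$, because these mobile parts are determined by $\sum_\Gamma o_\Gamma(mdD_i)\Gamma$ on $\widetilde X$, which agree. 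Hence $H^0(X,mdD_1)=H^0(X,mdD_2)$ as subspaces of $k(X)$ (after identifying via $D_1-D_2=\ddiv h$ for a fixed rational function $h$, if $D_1\sim D_2$; in general over numerical equivalence one argues on the resolution with the mobile parts, which are honest linear systems), giving the graded ring isomorphism $\eta^\#\colon R(X,D_2)\xrightarrow{\sim}R(X,D_1)$ after twisting by $h$. Taking Proj yields $\eta\colon X_1\xrightarrow{\sim}X_2$, and since both $\vphi_i$ are induced by the same linear systems $\Mob f^*(mdD_i)$, we get $\vphi_2=\eta\circ\vphi_1$.

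\textbf{The main obstacle.} The delicate point is matching the two section rings \emph{as subrings of a common object} rather than merely abstractly, since numerical (as opposed to $\Q$-linear) equivalence does not identify $H^0(X,mD_1)$ with $H^0(X,mD_2)$ inside $k(X)$ in any canonical way. The fix is to pass to a common resolution $f\colon\widetilde X\to X$ as in Theorem \ref{thm:ELMNP}(4): there the divisors $\Mob f^*(mdD_i)$ are basepoint free, and I must show $\Mob f^*(mdD_1)=\Mob f^*(mdD_2)$ as divisors on $\widetilde X$. This equality holds because $f^*(mdD_i)-\Mob f^*(mdD_i)=\Fix|f^*(mdD_i)|$, and the stable fixed part is $\sum_\Gamma md\cdot o_\Gamma(D_i)\cdot\Gamma$ (valuations on $\widetilde X$), which depends only on the numerical class by Lemma \ref{d_sigma} — using that $f^*D_1\equiv f^*D_2$. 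Once the mobile parts literally coincide as linear systems on $\widetilde X$, the associated maps $\widetilde X\to\PS^{N_m}$ coincide, their images stabilise to the same $\Proj$, and both $\vphi_1,\vphi_2$ are the induced rational maps from $X$, so they agree up to the single isomorphism $\eta$. The remaining bookkeeping — that $\Proj R(X,D_i)\cong\Proj\bigoplus_m H^0(\widetilde X,\Mob f^*(mdD_i))$ and that this is independent of $d$ — is standard and I would only sketch it.
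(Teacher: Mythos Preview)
Your argument for $\sB(D_1)=\sB(D_2)$ is fine and matches the paper's: finite generation gives $\sB(D_i)=\{x\mid o_x(D_i)>0\}$, and $o_x$ is a numerical invariant on big divisors by Lemma~\ref{d_sigma}.

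The second part contains a genuine error. You claim that on a common resolution $f\colon\widetilde X\to X$ one has
\[
\Mob f^*(mdD_1)=\Mob f^*(mdD_2)
\]
\emph{as divisors}, arguing that the fixed parts agree because they are computed by the $o_\Gamma$. The fixed parts do agree: $\Fix|f^*(mdD_i)|=md\cdot N_\sigma(f^*D_i)$ and $N_\sigma(f^*D_1)=N_\sigma(f^*D_2)$ by Lemma~\ref{d_sigma}. But the total divisors $f^*(mdD_1)$ and $f^*(mdD_2)$ are only \emph{numerically} equivalent, not equal, so subtracting equal fixed parts yields
\[
\Mob f^*(mdD_1)-\Mob f^*(mdD_2)=f^*\bigl(md(D_1-D_2)\bigr),
\]
which is numerically trivial but typically nonzero. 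Hence the linear systems $|\Mob f^*(mdD_i)|$ do \emph{not} literally coincide, the maps to projective space are not the same map, and there is no graded ring isomorphism $R(X,D_1)\simeq R(X,D_2)$ in general. Your parenthetical ``if $D_1\sim D_2$'' is exactly the special case where this would work; under mere numerical equivalence it fails.

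The fix is precisely what the paper does, and it is much shorter than your detour through multigraded rings (the finite generation of $R(X;D_1,D_2)$ is neither needed nor justified by the reasons you give). From $\Mob f^*(mdD_1)\equiv\Mob f^*(mdD_2)$, i.e.\ $P_\sigma(D_1)\equiv P_\sigma(D_2)$ after pulling back, one observes that both are basepoint free and big, so the associated semiample fibrations $\psi_i\colon\widetilde X\to X_i$ contract exactly the same curves (a curve $C$ is contracted iff $P_\sigma(D_i)\cdot C=0$, a numerical condition). This is already enough to produce the isomorphism $\eta\colon X_1\xrightarrow{\sim}X_2$ with $\psi_2=\eta\circ\psi_1$, and hence $\vphi_2=\eta\circ\vphi_1$. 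No identification of section spaces inside $k(X)$ is required.
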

\begin{proof}
Since finite generation holds, we have $\sB(D_i)=\{x\in X\mid o_x(D_i)>0\}$, so the first claim follows immediately from Lemma \ref{d_sigma}.

For the second claim, by passing to a resolution and by Theorem \ref{thm:ELMNP}, we may assume that there is a positive integer $k$ such that $\Mob(kD_i)$ are basepoint free, and $\Mob(pkD_i)=p\Mob(kD_i)$ for all positive integers $p$. Note that then $P_\sigma(D_i)=\frac1k\Mob(kD_i)$, and that 
\begin{align}\label{equ:psigma}
P_\sigma(D_1)\equiv P_\sigma(D_2)
\end{align}
since $N_\sigma(D_1)=N_\sigma(D_2)$ by Lemma \ref{d_sigma}. Thus $\varphi_i$ is given by the linear system $|kp P_\sigma(D_i)|$ for some $p\gg0$. But then \eqref{equ:psigma} shows that $\vphi_1$ and $\vphi_2$ contract the same curves, which implies the claim.
\end{proof}



\section{Geography of ample models}\label{sec:amplemodels}

In this section we study the geography of ample models associated to a finitely generated divisorial ring $\mathfrak R= R(X; D_1, \dots , D_r)$. More precisely, there is a decomposition $\Supp \mathfrak{R}= \coprod \mathcal A_i$ into finitely many chambers together with contracting maps $\vphi_i\colon X\dashto X_i$, such that $\vphi_i$ is the ample model for every divisor in $\mcal A_i$. We study these ample models in the special case of adjoint divisors; then, the varieties $X_i$ are $\Q$-factorial when the numerical classes of the elements of $\mcal A_i$ span $N^1(X)_\R$. This is a highly desirable feature which we would like to preserve in the general case. We then formally introduce the gen condition, and show -- both by analysis and by example -- that it is necessary in order to perform a Minimal Model Program in a more general setting.

We first recall the following important result \cite[Proposition 1.2]{Rei80}. We follow closely the proof of \cite[Lemma 7.10]{Deb01}.
  
\begin{lem}\label{l:reid}
Let $X$ be a smooth variety and let $D$ be a big divisor on $X$.
Assume that, for every positive integer $m$, the divisor $M_m=\Mob(mD)$ is basepoint free, that $M_m=mM_1$, and that $\Fix|D|$ has simple normal crossings. Let $\varphi\colon X\longrightarrow Y$ be the semiample fibration associated to $M_1$. 

Then every component of $\Fix|D|$ is contracted by $\varphi$. In particular, we have $R(X,D)\simeq R(Y,\vphi_*D)$.
\end{lem}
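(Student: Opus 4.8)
The plan is to exploit the fact that the linear system $|M_m| = |mM_1|$ defines, for $m$ sufficiently divisible, precisely the semiample fibration $\varphi\colon X \to Y$, and to compare the numerical behaviour of $D$ and of $M_1$ on curves contracted by $\varphi$. Fix a prime component $\Gamma$ of $\Fix|D|$. I want to show that $\Gamma$ is $\varphi$-contracted, equivalently that $M_1\cdot C = 0$ for some (hence every) curve $C$ through a general point of $\Gamma$ that is contracted by $\varphi$; but the cleanest route is the other direction: take a general curve $C \subseteq \Gamma$ in a covering family and show directly that $\varphi(C)$ is a point.

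First I would fix $m$ large and divisible enough that $|M_m|$ is basepoint free and $M_m = mM_1$, so that $\varphi$ is the morphism associated to $|M_m|$ (up to Stein factorisation, which changes nothing here). Write $mD = M_m + F_m$ with $F_m = \Fix|mD|$; since $\Fix|D|$ has simple normal crossings and $M_m = m M_1$, one checks $F_m = m\,\Fix|D|$, so in particular $\Gamma$ appears in $F_m$ with positive coefficient, say $mf$ with $f>0$. Now take a general point $x \in \Gamma$; generality together with the snc hypothesis guarantees that $x$ is a smooth point of $X$ lying on no other component of $\Fix|D|$, and that $\Gamma$ is smooth at $x$. The key step is to produce, through such an $x$, a curve $C$ with $M_m \cdot C = 0$: indeed, since every member of $|mD|$ contains $\Gamma$, and $\Gamma$ moves in no linear system component‑wise, restricting sections of $mD$ to $\Gamma$ kills the fixed component — more precisely, the movable part $M_m$ when restricted to $\Gamma$ has base locus containing $\Gamma \cap F_m'$ for the residual fixed divisor, and a dimension/degree count forces $M_m|_\Gamma$ to have numerical dimension dropping along curves in $\Gamma$. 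This is exactly the content of \cite[Proposition 1.2]{Rei80} / \cite[Lemma 7.10]{Deb01}: one shows $\mult_\Gamma \Fix|mD| $ grows linearly, while $M_m\cdot C$ for a covering curve $C\subseteq\Gamma$ is bounded, forcing $M_m \cdot C = 0$.

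Granting $M_m\cdot C = 0$ for a covering family of curves $C$ in $\Gamma$, it follows that $\varphi$ contracts every such $C$ to a point, and since these $C$ cover $\Gamma$ and $\varphi$ is proper with connected fibres, $\varphi(\Gamma)$ is a point; hence $\Gamma \subseteq \Exc(\varphi)$ is $\varphi$-contracted. As $\Gamma$ was an arbitrary component of $\Fix|D|$, every component of $\Fix|D|$ is contracted by $\varphi$, which is the first assertion. For the final statement, write $D = M_1 + \frac1m F_m$ with $\Supp F_m = \Supp \Fix|D|$ entirely $\varphi$-exceptional and effective, so $\varphi$ is a $D$-nonpositive birational contraction onto its image; then Remark~\ref{rem:1}(i) gives $H^0(X, kD) \simeq H^0(Y, \varphi_* (kD))$ compatibly in $k$, i.e.\ $R(X,D) \simeq R(Y,\varphi_*D)$.

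The main obstacle is the middle step — establishing $M_m \cdot C = 0$ for a covering family of curves in the fixed component $\Gamma$. This is where the hypotheses $M_m = mM_1$ (linearity of the mobile part, so no "rounding" losses) and the snc condition on $\Fix|D|$ (so that a general point of $\Gamma$ is a nice point of $X$ and of the fixed divisor) are genuinely used, and it is the heart of Reid's argument; the contraction statement and the section-ring isomorphism are then formal.
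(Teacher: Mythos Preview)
Your proposal has a genuine gap at the key step, and the sketch you offer for it is incorrect as written.

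You claim that one shows ``$\mult_\Gamma \Fix|mD|$ grows linearly, while $M_m\cdot C$ for a covering curve $C\subseteq\Gamma$ is bounded, forcing $M_m\cdot C=0$.'' But $M_m\cdot C = mM_1\cdot C$ is also linear in $m$, so no contradiction arises from comparing these two quantities. There is no direct intersection-theoretic way to extract $M_1\cdot C=0$ from the fact that $\Gamma$ is fixed in every $|mD|$; the argument in \cite[Proposition 1.2]{Rei80} and \cite[Lemma 7.10]{Deb01}, which the paper follows, is cohomological, not numerical. One bounds $h^0\big(\varphi(\Gamma),\OO_{\varphi(\Gamma)}(m)\big)$ by $O(m^{n-2})$, where $n=\dim X$, and this forces $\dim\varphi(\Gamma)\leq n-2$. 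The mechanism is: the isomorphism $H^0(X,M_m)\simeq H^0(X,M_m+\Gamma)$ (coming from $\Gamma\subseteq\Fix|mD|$) injects $H^0\big(\Gamma,(M_m+\Gamma)_{|\Gamma}\big)$ into $H^1(X,M_m)$, and one then controls $h^1(X,M_m)$ via the Leray spectral sequence for $\varphi$ and Serre vanishing on $Y$. The snc hypothesis is used so that $\Gamma$ is smooth and one can write $\Gamma_{|\Gamma}\sim G^+-G^-$ with $G^\pm$ Cartier on $\Gamma$.

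Separately, even granting $M_1\cdot C=0$ for a covering family of curves in $\Gamma$, your deduction that ``$\varphi(\Gamma)$ is a point'' is wrong: a covering family of contracted curves only gives $\dim\varphi(\Gamma)<\dim\Gamma$, i.e.\ $\Gamma$ is $\varphi$-exceptional, which is all that is claimed and all that is needed. Connectedness of fibres does not upgrade this to a point (think of a projection $\PS^1\times\PS^1\to\PS^1$). Your final paragraph deriving $R(X,D)\simeq R(Y,\varphi_*D)$ from $D$-nonpositivity is fine once exceptionality of $\Fix|D|$ is established.
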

\begin{proof}
Denote $n=\dim X$. We may assume that $\varphi$ is the morphism associated to $M_1$, and then $\OO_X(M_1)=\varphi^*\OO_Y(1)$ for a very ample line bundle $\OO_Y(1)$ on $Y$. Let $\Gamma$ be a component of $\Fix|D|$. We need to show that $h^0(\vphi(\Gamma),\OO_{\vphi(\Gamma)}(m))\leq O(m^{n-2})$. 

Since $\OO_X(M_m)=\vphi^*\OO_Y(m)$ and the natural map $\OO_{\vphi(\Gamma)}\lto\vphi_*\OO_\Gamma$ is injective, we have
\begin{equation}\label{equ:compare}
h^0(\vphi(\Gamma),\OO_{\vphi(\Gamma)}(m))\leq h^0(\vphi(\Gamma),\OO_Y(m)\otimes\vphi_*\OO_\Gamma)=h^0(\Gamma,\OO_\Gamma(M_m)).
\end{equation}
Write $\Gamma_{|\Gamma}\sim G^+-G^-$, where $G^+,G^-\geq0$ are Cartier divisors on $\Gamma$. Consider the exact sequences
\begin{equation}\label{equ:exseq2}
0\lto H^0(\Gamma, M_{m|\Gamma}-G^-)\longrightarrow H^0(\Gamma,M_{m|\Gamma}) \longrightarrow H^0(G^-, M_{m|G^-}) 
\end{equation}
and 
\begin{equation}\label{equ:exseq}
H^0(X, M_m)\longrightarrow H^0(X,M_m+\Gamma) \longrightarrow H^0(\Gamma, (M_m+\Gamma)_{\vert \Gamma}) \longrightarrow H^{1}(X, M_m).
\end{equation}
Since $\Fix|mD|=m\Fix|D|$, the divisor $\Gamma$ is a component of $\Fix|mD|$, hence the first map in \eqref{equ:exseq} is an isomorphism and the last map in \eqref{equ:exseq} is an injection. Therefore, from \eqref{equ:compare}, \eqref{equ:exseq2} and \eqref{equ:exseq} we have
\begin{multline*}
h^0(\vphi(\Gamma),\OO_{\vphi(\Gamma)}(m))\leq h^0(\Gamma,M_{m|\Gamma})\leq h^0(\Gamma, M_{m|\Gamma}-G^-)+h^0(G^-, M_{m|G^-})\\
\leq h^0(\Gamma, (M_m+\Gamma)_{|\Gamma})+h^0(G^-, M_{m|G^-})\leq h^1(X, M_m)+h^0(G^-, M_{m|G^-}).
\end{multline*}
As $h^0(G^-, M_{m|G^-})\leq O(m^{n-2})$ for dimension reasons, it is enough to show that $h^1(X, M_m)\leq O(m^{n-2})$. To this end, consider the Leray spectral sequence 
$$H^p(Y, R^{1-p}\vphi_{\ast}\OO_X(M_m))\Rightarrow  H^1(X, \OO_X(M_m)).$$ 
The terms $H^1(Y, \vphi_{*}\OO_X(M_m))= H^1(Y, \OO_{Y}(m))$ vanish for $m\gg0$ by Serre vanishing, so we need to prove 
\begin{equation}\label{equ:final}
h^0(Y, R^1\vphi_{\ast}\OO_{X}(M_m))\leq O(m^{n-2}).
\end{equation} 
Let $U\subseteq Y$ be the maximal open subset over which $\vphi$ is an isomorphism. By \cite[III.11.2]{Har77}, for each $m$ the sheaf $R^1\vphi_*\OO_{X}(M_m)$ is supported on the set $Y\setminus U$ of dimension at most $n-2$, hence $\chi(Y,R^1\vphi_{\ast}\OO_{X}(M_m))\leq O(m^{n-2})$. But by Serre vanishing again, all the higher cohomology groups of $R^1\vphi_{\ast}\OO_{X}(M_m)$ vanish for $m\gg0$, and this implies \eqref{equ:final}.
\end{proof}
	
The following is the main result of this section -- the geography of ample models.

\begin{thm}\label{thm:decomposition}
Let $X$ be a projective $\Q$-factorial variety, and let $\mathcal C\subseteq\Div_\R(X)$ be a rational polyhedral cone such that the ring $\mathfrak R=R(X,\mcal C)$ is finitely ge\-ne\-ra\-ted. Assume that $\Supp\mathfrak R$ contains a big divisor. Then there is a finite decomposition
\[\Supp\mathfrak R= \coprod \mcal{A}_i\]
into cones such that the following holds:
\begin{enumerate}
\item each $\overline{\mathcal A_i}$ is a rational polyhedral cone,
\item for each $i$, there exists a normal projective variety $X_i$ and a rational map $\vphi_i\colon X\dashto X_i$ such that $\varphi_i$ is the ample model for every $D\in\mathcal A_i$,
\item if $\mathcal A_j\subseteq\overline{\mathcal A_i}$, then there is a morphism $\varphi_{ij}\colon X_i\lto X_j$ such that the diagram 
\[
\xymatrix{ 
X \ar@{-->}[rr]^{\varphi_i} \ar@{-->}[dr]_{\varphi_j} & \quad & X_i
  \ar[dl]^{\varphi_{ij}}\\
\quad & X_j & \quad
}\]
commutes.
\item if $\mathcal A_i$ contains a big divisor, then $\varphi_i$ is a semiample model for every $D\in\overline{\mathcal{A}_i}$.
\end{enumerate}
\end{thm}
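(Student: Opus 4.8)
The plan is to extract the decomposition directly from the finite rational polyhedral subdivision $\Supp\mathfrak R=\bigcup\mcal C_i$ of Theorem~\ref{thm:ELMNP}(3), and to use the asymptotic order functions $o_\Gamma$ to define an equivalence relation whose classes will be the chambers $\mcal A_i$. First I would pass to a resolution $f\colon\widetilde X\lto X$ as in Theorem~\ref{thm:ELMNP}(4), so that for a fixed $d$ the mobile part $\Mob f^*(dD)$ is basepoint free and scales linearly; this is harmless because $R(X,D)\simeq R(\widetilde X,f^*D)$ for $D$ effective, and it lets me work with honest semiample fibrations. For each $D\in\Supp\mathfrak R$ with $|D|_\R\neq\emptyset$, the associated semiample fibration of $\Mob f^*(dD)$ has a target $Y_D$; I would declare $D\sim D'$ when these targets agree compatibly (equivalently, when $\{\Gamma:o_\Gamma(f^*D)=0\}$ and the fibration structure coincide). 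Since there are only finitely many cones $\mcal C_i$ in the ELMNP subdivision and $o_\Gamma$ is linear on each, only finitely many "vanishing patterns'' occur, so there are finitely many classes; each class is a union of relatively open faces of the $\mcal C_i$, hence its closure is a rational polyhedral cone. This gives (1).

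For (2), fix a chamber $\mcal A_i$ and a rational point $D$ in its relative interior. Define $X_i$ to be the target of the semiample fibration $\psi\colon\widetilde X\lto X_i$ attached to $\Mob f^*(dD)$, composed appropriately so as to descend to $X$: concretely, take $Z_D=\Proj R(X,D)$ when $D$ is big, and in general use the Stein factorization of the morphism induced on $\widetilde X$ by a sufficiently divisible multiple. I then need to check $\vphi_i\colon X\dashto X_i$ is the ample model of every $D'\in\mcal A_i$, i.e. that there is a semiample model $f'\colon X\dashto Z$ of $D'$ and a contraction $g\colon Z\lto X_i$ with $f'_*D'=g^*A$, $A$ ample. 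The key point is that for $D'$ in the relative interior of $\mcal A_i$, linearity of all $o_\Gamma$ on $\mcal C$-pieces forces $P_\sigma(f^*D')$ to scale linearly and to be semiample, pulling back an ample class from $X_i$; here Lemma~\ref{lem:projectionMov}-style reasoning together with Lemma~\ref{lem:ords}(i) does the work. For $D'$ on the boundary faces of $\mcal A_i$ one argues by continuity of $o_\Gamma$ on $\B(X)$ (Lemma~\ref{d_sigma}) and a limiting argument, using that the vanishing pattern is constant on all of $\mcal A_i$ by construction.

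For (3), if $\mcal A_j\subseteq\overline{\mcal A_i}$ then every $o_\Gamma$ vanishing on $\mcal A_j$ already vanishes on $\mcal A_i$ (the vanishing locus is a union of closed cones), so the fibration defining $X_j$ factors through the finer one defining $X_i$; this yields the contraction $\varphi_{ij}\colon X_i\lto X_j$ and commutativity of the triangle is then the uniqueness of Stein factorization. Finally, (4): if $\mcal A_i$ contains a big divisor, then $X_i$ has the same dimension as $X$, the map $\vphi_i$ is birational, and for $D\in\overline{\mcal A_i}$ the divisor $(\vphi_i)_*D$ is semiample because it is the pullback of an ample class under the birational contraction composed with a morphism — so it is a semiample model, and $D$-nonpositivity follows from Corollary~\ref{cor:negative} since $o_\Gamma$ controls the discrepancy divisor. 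The main obstacle I expect is (2) on the boundary of the chambers: proving that the ample model really does stay constant up to the closure of $\mcal A_i$ — rather than merely in its interior — requires carefully combining the linearity statement of Theorem~\ref{thm:ELMNP}(3), continuity of $o_\Gamma$, and Lemma~\ref{l:reid} (which tells us the fixed components are contracted by the semiample fibration) to identify the limiting fibration with $\vphi_i$; this is exactly where one must be careful that no new curves get contracted in the limit.
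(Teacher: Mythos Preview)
Your overall strategy---pull back to a resolution as in Theorem~\ref{thm:ELMNP}(4), use the semiample fibrations of the mobile parts, and read off the chambers from the ELMNP subdivision---is exactly what the paper does. Two points, however, do not go through as written.

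First, the paper takes $\mcal A_i$ to be simply the relative interiors of the ELMNP cones $\mcal C_i$, not an equivalence-class construction. Your proposed equivalence via ``vanishing patterns of $o_\Gamma$ plus fibration structure'' is not needed, and the assertion that each class has rational polyhedral closure is unjustified: a union of relatively open faces may have closure which is a union of several cones, not one.

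Second, and more seriously, your argument for (3) has a genuine gap. You claim that if $\mcal A_j\subseteq\overline{\mcal A_i}$ then every $o_\Gamma$ vanishing on $\mcal A_j$ already vanishes on $\mcal A_i$. This is false in general: $o_\Gamma$ is linear and nonnegative on $\overline{\mcal A_i}$, and it may well vanish on a proper face $\mcal A_j$ while being positive on the interior $\mcal A_i$ (think of a big divisor with a fixed component $\Gamma$ degenerating to a nef face). More fundamentally, the set $\{\Gamma:o_\Gamma=0\}$ records the support of the \emph{fixed} part, not which curves are contracted by the semiample fibration of the \emph{mobile} part; so even with the correct direction this would not produce the morphism $\varphi_{ij}$. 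The paper's argument is purely intersection-theoretic: since $\mcal A_i$ is relatively open one writes $k_iD_i=k^\circ D^\circ+k_jD_j$ with $D^\circ\in\mcal A_i$, hence $k_iM_i=k^\circ M^\circ+k_jM_j$ for the mobile parts, and nefness of $M^\circ,M_j$ forces $M_i\cdot C=0\Rightarrow M_j\cdot C=0$. This is what gives $X_i\to X_j$.

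For (2) and (4) you also omit the step that makes $\varphi_i$ a birational \emph{contraction} and that shows $D$-nonpositivity: the paper uses Lemma~\ref{l:reid} to prove that $F_i$ and all $f$-exceptional divisors are $\psi_i$-exceptional, and then linearity of $o_\Gamma$ on $\overline{\mcal A_i}$ gives $\Supp F_j\subseteq\Supp F_i$, so $F_j$ is $\psi_i$-exceptional too. From $f^*(dD_j)=\psi_i^*(\varphi_{ij}^*\OO_{X_j}(1))+F_j$ one reads off directly that $\varphi_i$ is a semiample model for $D_j$; Corollary~\ref{cor:negative} is not the right tool here.
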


\begin{proof}
Let $\Supp\mathfrak R= \bigcup \mcal C_i$ be a finite rational polyhedral decomposition as in Theorem \ref{thm:ELMNP}, and let $\mcal{A}_i$ be the relative interior of $\mcal{C}_i$ for each $i$. We show that this is the required decomposition. 

Let $f\colon \widetilde{X} \lto X$ be a resolution and let $d$ be a positive integer as in Theorem \ref{thm:ELMNP}. For each $i$, fix $D_i\in \mcal{A}_i\cap \Div(X)$, and denote $M_i= \Mob f^*(dD_i)$ and $F_i=\Fix |f^*(dD_i)|$. Then $M_i$ is basepoint free, and let $\psi_i\colon \widetilde X \longrightarrow X_i$ be the semiample fibration associated to $M_i$. Let $\vphi_i\colon X \dashto X_i$ be the induced map.
\[
\xymatrix{ 
\widetilde X \ar[dr]^{\psi_i} \ar[d]_f & \\
X \ar@{-->}[r]^{\varphi_i} & X_i
}\]

\begin{cla}
Assume that $\mcal{A}_j \subseteq \overline{\mcal{A}_i}$, and let $C\subseteq \widetilde{X}$ be a curve such that $M_i\cdot C=0$. Then $M_j\cdot C=0$. In other words, all curves contracted by $\psi_i$ are contracted by $\psi_j$.
\end{cla}
Indeed, since $\mcal{A}_i$ is relatively open, there exist a divisor $D^\circ \in \mcal{A}_i\cap \Div(X)$ and positive integers $k_i,k_j,k^\circ$ such that $k_iD_i= k^\circ D^\circ+k_jD_j$. By the definition of $f$ and $d$, the divisor $M^\circ= \Mob f^*(dD^\circ)$ is basepoint free, and we have $k_iM_i= k^\circ M^\circ+k_jM_j$. In particular, if $C\subset \widetilde X$ is a curve such that $M_i\cdot C=0$, then $M^\circ \cdot C=M_j\cdot C=0$, which shows the claim. 

The claim immediately implies $\varphi_j= \vphi_{ij} \circ \vphi_{i}$ for some morphism $\vphi_{ij}\colon X_i \longrightarrow X_j$, which shows (3). In particular, when $i=j$ and since the divisors $D_i$ are arbitrary, this shows that the definition of $\vphi_i$ is independent of the choice of $D_i$ up to isomorphism.

Finally, we prove (2) and (4). For any $j$, pick an index $i$ such that $\mathcal A_j\subseteq\overline{\mcal{A}_i}$ and $\mcal{A}_i$ contains a big divisor, and let $E$ be the sum of all $f$-exceptional prime divisors. Since $\Mob(f^*(dD_i)+E)=M_i$ and $\Fix|f^*(dD_i)+E|=F_i+E$, the divisors $F_i$ and $E$ are $\psi_i$-exceptional by Lemma \ref{l:reid}, and in particular, $\vphi_i$ is a contraction.

Let $D$ be any divisor in $\mathcal A_j$; without loss of generality, we may assume that $D=D_j$. Since all functions $o_\Gamma$ are linear on $\overline{\mathcal A_i}$, we have $\Supp F_j\subseteq \Supp F_i$, hence $F_j$ is $\psi_i$-exceptional by the argument above. As $M_j=\psi_j^*\OO_{X_j}(1)$, by (3) we have 
\[
f^*(dD_j)=\psi_i^*(\vphi_{ij}^*\OO_{X_j}(1))+ F_j,
\]
and the divisor $(\vphi_i)_*(dD_j)=(\psi_i)_* M_j= \vphi_{ij}^*\OO_{X_{j}}(1)$ is basepoint free. We conclude that $\vphi_i$ is a semiample model for $D_j$, and $\varphi_j$ is the ample model for $D_j$.  
\end{proof}

An immediate corollary is the following result from \cite{HK00}; we prove the converse statement in the next section.  

\begin{cor}\label{lem:mds}
Let $X$ be a $\Q$-factorial projective variety. If $X$ is a Mori Dream Space, then its Cox ring is finitely generated.
\end{cor}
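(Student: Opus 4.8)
The plan is to apply Theorem \ref{thm:decomposition} with a suitable choice of the cone $\mathcal C$, and then extract finite generation of the Cox ring from the structure of the decomposition $\Supp\mathfrak R=\coprod\mathcal A_i$. First I would set things up: since $X$ is a Mori Dream Space, we have $\Pic(X)_\Q=N^1(X)_\R$, so I can pick a basis $D_1,\dots,D_r$ of $\Pic(X)_\Q$ such that the cone $\mathcal C=\sum\R_+D_i$ contains $\Effb(X)$, and then $R(X;D_1,\dots,D_r)$ is by definition a Cox ring of $X$, whose finite generation is independent of the choice of the $D_i$ by the remark following the definition of Cox ring. I would further arrange that $\mathcal C$ contains a big (indeed ample) divisor, which is automatic since $\Amp(X)$ is full-dimensional and contained in $\Effb(X)\subseteq\mathcal C$. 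So it suffices to show $R(X,\mathcal C)$ is finitely generated.

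The heart of the argument is to run the reasoning of Theorem \ref{thm:decomposition} "in reverse". That theorem, as stated, assumes $\mathfrak R=R(X,\mathcal C)$ is finitely generated and produces the chamber decomposition. Here I need to go the other way, so the strategy is instead to build the decomposition directly from the Mori Dream Space hypothesis and then invoke a finite-generation criterion. Concretely: the MDS conditions give finitely many small $\Q$-factorial modifications $f_i\colon X\dashto X_i$ with $\overline{\Mov}(X)=\bigcup f_i^*\Nef(X_i)$ and each $\Nef(X_i)$ rational polyhedral and generated by semiample classes. I would subdivide $\overline{\Eff}(X)$ into finitely many rational polyhedral chambers as follows: over each chamber $f_i^*\Nef(X_i)$ inside $\overline{\Mov}(X)$, the class is movable and its $\sigma$-decomposition is trivial after pushing to $X_i$; outside $\overline{\Mov}(X)$, one peels off fixed components using that $\overline{\Eff}(X)$ is rational polyhedral (which follows from the MDS structure — finitely many SQMs and finitely many divisorial contractions exhaust $\overline{\Eff}(X)$). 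On each resulting chamber $\mathcal C_j$, every integral divisor $D$ has $\Mob(D)$ pulled back from a semiample, indeed basepoint-free (after bounded multiple) divisor on the appropriate $X_i$; this is precisely the hypothesis list of Theorem \ref{thm:ELMNP}(3)–(4), and combined with Theorem \ref{thm:ELMNP} running backwards — i.e. the standard fact that a divisorial ring is finitely generated once the mobile parts of a cofinal system of divisors are (uniformly) basepoint free on a fixed model — yields finite generation of $R(X,\mathcal C_j)$.

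Finally I would assemble the pieces: $R(X,\mathcal C)$ is the "gluing" of the finitely many finitely generated subrings $R(X,\mathcal C_j)$ along their common faces, and a divisorial ring graded by a rational polyhedral cone which admits a finite rational polyhedral subdivision into cones on each of which it is finitely generated is itself finitely generated (this is the Gordan-type argument used throughout \cite{CaL10}, \cite{CoL10}). Hence the Cox ring $R(X;D_1,\dots,D_r)$ is finitely generated, as desired. The main obstacle I anticipate is the step establishing that on each chamber the mobile parts are uniformly basepoint free on a fixed birational model — this requires carefully using condition (2) of the MDS definition (the nef cone of each $X_i$ is spanned by semiample line bundles) together with a common resolution dominating all the $X_i$, and checking that $D$-nonpositivity of the maps $f_i$ lets one transport basepoint-freeness back to $X$ via Remark \ref{rem:1}(i). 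Everything else is bookkeeping with rational polyhedral cones.
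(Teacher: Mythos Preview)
Your overall strategy matches the paper's: decompose $\overline{\Eff}(X)$ into finitely many rational polyhedral chambers, show finite generation on each, and glue. The movable part is handled identically --- $\overline{\Mov}(X)=\bigcup f_j^*\Nef(X_j)$, and each $R(X,f_j^*\Nef(X_j))\simeq R(X_j,\Nef(X_j))$ is finitely generated because $\Nef(X_j)$ is spanned by semiample divisors.

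The gap is in your extension from $\overline{\Mov}(X)$ to $\overline{\Eff}(X)$. You write that $\overline{\Eff}(X)$ is rational polyhedral because ``finitely many SQMs and finitely many divisorial contractions exhaust $\overline{\Eff}(X)$'', but Definition~\ref{dfn:25} gives you \emph{only} the SQMs covering $\overline{\Mov}(X)$; it says nothing about divisorial contractions or about $\overline{\Eff}(X)$. Both facts must be proved, and your ``peel off fixed components'' does not say how. The paper resolves this by bootstrapping: once $R(X,\overline{\Mov}(X))$ is known to be finitely generated, Theorem~\ref{thm:decomposition} applies to \emph{that} ring and produces ample models $\varphi_i\colon X\dashto X_i$ for the faces $\mathcal F_i\subseteq\partial\overline{\Mov}(X)$ meeting $\B(X)$. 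Setting $\mathcal D_i=\mathcal F_i+\sum_k\R_+E_{ik}$ with $E_{ik}$ the $\varphi_i$-exceptional prime divisors, one uses Lemma~\ref{lem:projectionMov} (rationality of $P_\sigma(D)$, which needs $\overline{\Mov}(X)$ polyhedral) and Lemma~\ref{l:reid} ($N_\sigma(D)$ is $\varphi_{i_0}$-exceptional) to show every big $D$ lies in some $\mathcal D_{i_0}$. This simultaneously proves $\overline{\Eff}(X)$ is rational polyhedral and supplies the chambers; on each $\mathcal D_i$ one then has $R(X;G_1,\dots,G_p)\simeq R(X_i;M_1,\dots,M_p)$ with the $M_\ell$ semiample, giving finite generation directly --- no common resolution or ``Theorem~\ref{thm:ELMNP} in reverse'' is needed.
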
 
\begin{proof}
We first show that the divisorial ring $R(X, \overline{\Mov} (X))$ is finitely generated. Indeed, with notation from Definition \ref{dfn:25}, we have that $\overline{\Mov}(X)=\bigcup\mathcal C_j$, where $\mathcal C_j=f_j^* \Nef(X_j)$, and hence it is enough to show that each ring $R(X,\mathcal C_j)\simeq R(X_j, \Nef(X_j))$ is finitely generated. But this is clear because each $\Nef(X_j)$ is spanned by finitely many semiample divisors. 

Let $\mathcal F_i$ be all the faces of all $\mcal C_j$ with the property that $\mcal F_i\subseteq\partial\overline{\Mov}(X)$ and $\mcal F_i\cap\B(X)\neq\emptyset$. Let $\vphi_i\colon X\dashto X_i$ be the ample models associated to interiors of $\mcal F_i$, cf.\ Theorem \ref{thm:decomposition}, and let $E_{ik}$ be the exceptional divisors of $\vphi_i$. Denote $\mcal D_i=\mcal F_i+\sum_k\R_+ E_{ik}$, and note that each $\mcal D_i$ is a rational polyhedral cone. 

We claim that $\overline{\Eff}(X)=\overline{\Mov}(X)\cup\bigcup_i\mcal D_i$. To see this, let $D\in\B(X)\backslash \overline{\Mov}(X)$ be a $\Q$-divisor. Then $P_\sigma(D)$ is a big $\Q$-divisor which belongs to $\partial\overline{\Mov}(X)$ by Lemma \ref{lem:projectionMov}, and hence the ring $R(X,D)$ is finitely generated by the above. There is a face $\mcal F_{i_0}$ which contains $P_\sigma(D)$ in its relative interior, and $\vphi_{i_0}$ is the ample model of $P_\sigma(D)$ by Theorem \ref{thm:decomposition}. The divisor $N_\sigma(D)$ is contracted by $\vphi_{i_0}$ by Lemma \ref{l:reid}, and thus $D\in\mcal D_{i_0}$. Therefore, we have $\B(X)\subseteq\overline{\Mov}(X)\cup\bigcup_i\mcal D_i$, and by taking closures we obtain $\overline{\Eff}(X)\subseteq\overline{\Mov}(X)\cup\bigcup_i\mcal D_i$. The converse inclusion is obvious.

In particular, the cone $\overline{\Eff}(X)$ is rational polyhedral, and $R(X,\overline{\Eff}(X))$ is a Cox ring of $X$. Fix an index $i$ and pick generators $G_1,\dots,G_p$ of $\mcal D_i$. It is enough to show that the ring $R(X;G_1,\dots,G_p)$ is finitely generated. The map $\vphi_i$ is a semiample model for each $G_\ell$ by Theorem \ref{thm:decomposition}(4), and thus $G_\ell=\vphi_i^*M_\ell+F_\ell$, where $M_\ell$ is a semiample $\Q$-divisor on $X_i$ and $F_\ell$ is $\vphi_i$-exceptional. But then $R(X;G_1,\dots,G_p)\simeq R(X_i;M_1,\dots,M_p)$, and the finite generation follows.
\end{proof}

The next theorem shows that in the classical setting of adjoint divisors, some of the ample models $X_i$ from Theorem \ref{thm:decomposition} are $\Q$-factorial. This is a known consequence of the classical Minimal Model Program \cite[Theorem 3.3]{HM09}, however here we obtain the result directly.

\begin{thm}\label{lem:adjoint}
Let $X$ be a projective $\Q$-factorial variety, and let $\Delta_1,\dots,\Delta_r$ be big $\Q$-divisors such that all pairs $(X,\Delta_i)$ are klt. Let
\[
\mathfrak R=R(X; K_X+\Delta_1, \dots, K_X+\Delta_r),
\]
and note that $\mathfrak R$ is finitely generated by Theorem \ref{thmA}. Assume that $\Supp\mathfrak R$ contains a big divisor. Then there exist a finite decomposition $\Supp\mathfrak R= \coprod \mcal{A}_i$ and maps $\vphi_i\colon X\dashto X_i$ as in Theorem \ref{thm:decomposition}, such that:
\begin{enumerate}
\item[(i)] if $\vphi_i$ is birational, then $X_i$ has rational singularities,
\item[(ii)] if the numerical classes of the elements of $\overline{\mcal{A}_i}$ span $N^1(X)_\R$, then $X_i$ is $\Q$-factorial.
\end{enumerate} 
\end{thm}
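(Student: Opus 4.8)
The plan is to take as input the decomposition $\Supp\mathfrak R=\coprod\mcal A_i$ and the maps $\vphi_i\colon X\dashto X_i$ produced by Theorem~\ref{thm:decomposition}, together with the auxiliary data from its proof: a resolution $f\colon\widetilde X\to X$, a divisor $D_i\in\mcal A_i\cap\Div(X)$ for each $i$, and the semiample fibration $\psi_i\colon\widetilde X\to X_i$ of $M_i=\Mob f^*(dD_i)$, where $f^*(dD_i)=M_i+F_i$ and $M_i=\psi_i^*H_i$ for an ample $H_i$ on $X_i$. The recurring fact I would use is that, by Lemma~\ref{l:reid} exactly as in the proof of Theorem~\ref{thm:decomposition}, every $f$-exceptional prime divisor and every component of $F_i$ is $\psi_i$-exceptional. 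After rescaling $D_i$ I may assume $D_i=K_X+\Delta_i$ with $(X,\Delta_i)$ klt and $\Delta_i$ big, which changes neither $\vphi_i$ nor $X_i$. Since $(f,\psi_i)\colon\widetilde X\to X\times X_i$ is a resolution of $\vphi_i$, all computations can be done on $\widetilde X$.

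For (i): as $\vphi_i$ and $f$ are birational, $\psi_i$ is birational, hence a resolution of $X_i$, and it suffices to show $R^j(\psi_i)_*\mathcal O_{\widetilde X}=0$ for $j>0$. Starting from the klt identity $K_{\widetilde X}=f^*(K_X+\Delta_i)+\sum_l e_lE_l$ with $e_l>-1$, substituting $f^*(K_X+\Delta_i)=\tfrac1d(M_i+F_i)$ and $M_i=\psi_i^*H_i$, and rounding up, I would rewrite this as
$$K_{\widetilde X}+\Gamma=\tfrac1d M_i+N,$$
where $\Gamma\ge0$ is snc with $\lfloor\Gamma\rfloor=0$ and $N\ge0$ is an integral divisor every component of which is $\psi_i$-exceptional: the bound $e_l>-1$ gives $N\ge0$, and a component of $N$ is either $f$-exceptional or a component of $F_i$, hence $\psi_i$-exceptional. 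Then $N-(K_{\widetilde X}+\Gamma)=-\tfrac1d M_i$ is $\psi_i$-numerically trivial, hence $\psi_i$-nef, and $\psi_i$-big because $\psi_i$ is birational; relative Kawamata--Viehweg vanishing gives $R^j(\psi_i)_*\mathcal O_{\widetilde X}(N)=0$ for $j>0$, and since $N\ge0$ is $\psi_i$-exceptional the usual argument (the relative version of the proof that klt singularities are rational) upgrades this to $R^j(\psi_i)_*\mathcal O_{\widetilde X}=0$. So $X_i$ has rational singularities. This step is routine, and it is the klt hypothesis that makes it run; no spanning assumption is used.

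For (ii): I would first observe that the hypothesis forces $\mcal A_i$ to contain a big divisor. Indeed $\pi(\overline{\mcal A_i})$ is then a full-dimensional rational polyhedral cone contained in $\overline{\Eff}(X)$, so it has nonempty interior and therefore meets $\B(X)=\Int\overline{\Eff}(X)$; as $\B(X)$ is open and $\pi(\mcal A_i)$ is dense in $\pi(\overline{\mcal A_i})$, some element of $\mcal A_i$ is big. Hence $\vphi_i$ is birational (it is the ample model of a big divisor), and by Theorem~\ref{thm:decomposition}(4) it is a semiample model — in particular $D$-nonpositive — for every $D\in\overline{\mcal A_i}$. Now let $T$ be a prime divisor on $X_i$; since $\vphi_i$ is a birational contraction, $T$ is the strict transform of a prime divisor $T'$ on $X$, and $T'$ is $\Q$-Cartier. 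Because $\pi(\mcal A_i)$ is open I may pick a big $D_0\in\mcal A_i$ and, for small rational $\lambda>0$, divisors $D^{\pm}\in\mcal A_i$ with $D^{\pm}\equiv D_0\pm\lambda T'$, so $D^+-D^-\equiv 2\lambda T'$. Combining the defining equations $f^*D^{\pm}=\psi_i^*(\vphi_i)_*D^{\pm}+E^{\pm}$ (with $E^{\pm}\ge0$ $\psi_i$-exceptional and $(\vphi_i)_*D^{\pm}$ ample, hence $\Q$-Cartier, on $X_i$) with $f^*(D^+-D^-)\equiv 2\lambda f^*T'$ and $f^*T'=\widetilde{T'}+G_1$, $G_1$ supported on $f$-exceptional (hence $\psi_i$-exceptional) divisors, and reducing modulo numerical equivalence over $X_i$, one gets a relation
$$2\lambda\,\widetilde{T'}\equiv_{X_i}G$$
on $\widetilde X$ with $G$ a $\psi_i$-exceptional $\Q$-divisor. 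Then $2\lambda\widetilde{T'}-G$ is a $\Q$-Cartier divisor on the smooth variety $\widetilde X$, numerically trivial over $X_i$; it follows that it descends, $2\lambda\widetilde{T'}-G=\psi_i^*M$ with $M=(\psi_i)_*(2\lambda\widetilde{T'}-G)=2\lambda T$, so $T$ is $\Q$-Cartier. As $T$ was arbitrary, $X_i$ is $\Q$-factorial.

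The main obstacle is the heart of (ii): producing the relation $2\lambda\widetilde{T'}\equiv_{X_i}G$ with $G$ exceptional — this is precisely where the spanning hypothesis is indispensable, and it requires keeping careful track of which divisors on $\widetilde X$ are $\psi_i$-exceptional, using Lemma~\ref{l:reid} as in Theorem~\ref{thm:decomposition} — and then descending the resulting $\psi_i$-numerically trivial $\Q$-Cartier divisor across $\psi_i$. For the descent one uses that the situation is klt and adjoint (via the Negativity lemma together with a relative basepoint-free statement), so that the relatively trivial divisor $2\lambda\widetilde{T'}-G$ is relatively semiample and hence a pullback. Part (i), by contrast, is a comparatively routine vanishing computation.
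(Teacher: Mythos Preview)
Your overall architecture matches the paper's: establish rational singularities of $X_i$ in (i), then in (ii) produce a relation ``strict transform $\equiv_{X_i}$ $\psi_i$-exceptional'' and descend. Two comparative remarks and one genuine gap.

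\medskip

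\textbf{Part (i).} Your computation is correct in spirit but reinvents the wheel. The paper dispatches this in one line: since $\vphi_i$ is $(K_X+\Delta)$-nonpositive for a suitable klt $\Delta$ with $K_X+\Delta\in\mcal A_i$, the pair $(X_i,(\vphi_i)_*\Delta)$ is again klt, hence $X_i$ has rational singularities. Your route via relative Kawamata--Viehweg on $\widetilde X$ and ``the usual argument'' is essentially unrolling the proof of that black box; it works, but is unnecessary here.

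\medskip

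\textbf{Part (ii), producing the relation.} The paper obtains the relation slightly more directly: it takes the proper transform $\widetilde B$ of a Weil divisor $B$ on $X_i$ to $\widetilde X$, uses $N^1(\widetilde X)_\R=f^*N^1(X)_\R\oplus\bigoplus\R[E_j]$ to write $\widetilde B\equiv\sum p_jf^*(dB_j)+\sum r_jE_j$ with $B_j\in\mcal A_i$ spanning $N^1(X)_\R$, then splits each $f^*(dB_j)=\psi_i^*A_j+F_j$ to obtain $\widetilde B-F\equiv_{X_i}0$ with $F$ $\psi_i$-exceptional. Your perturbation argument with $D^\pm\equiv D_0\pm\lambda T'$ arrives at the same place and is fine.

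\medskip

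\textbf{The descent step: this is the gap.} You assert that $2\lambda\widetilde{T'}-G\equiv_{X_i}0$ implies $2\lambda\widetilde{T'}-G=\psi_i^*M$, and justify this by ``Negativity lemma together with a relative basepoint-free statement, so that the relatively trivial divisor is relatively semiample''. That is not correct: the basepoint-free theorem applies to divisors of adjoint type, and $2\lambda\widetilde{T'}-G$ has no such structure; the Negativity lemma says nothing about $\Q$-linear triviality. In general a $\psi_i$-numerically trivial Cartier divisor on a resolution does \emph{not} descend (think of the resolution of a cone over an elliptic curve). The correct tool is precisely what you proved in (i): since $X_i$ has rational singularities, one has $R^1(\psi_i)_*\mathcal O_{\widetilde X}=0$, and then \cite[Proposition~12.1.4]{KM92} gives that any Cartier divisor on $\widetilde X$ which is $\psi_i$-numerically trivial is $\Q$-linearly equivalent to a pullback. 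This is exactly how the paper proceeds, and it is the reason (i) is needed for (ii). Replace your basepoint-free justification with this and the argument is complete.
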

\begin{proof}
We assume the notation from the proof of Theorem \ref{thm:decomposition}. For (i), pick a big $\Q$-divisor $\Delta$ such that $(X,\Delta)$ is klt and $K_X+\Delta\in \mcal{A}_i$. Then $(X_i,(\vphi_i)_*\Delta)$ is also klt because $\vphi_i$ is $(K+\Delta)$-nonpositive, hence $X_i$ has rational singularities.

We now show (ii). Let $B$ be a Weil divisor on $X_i$, and let $\widetilde B$ be its proper transform on $\widetilde{X}$. As $\widetilde{X}$ is smooth, $\widetilde B$ is $\Q$-Cartier. Let $E_1,\dots,E_k$ be all the $f$-exceptional prime divisors on $\widetilde X$. Since $f$ is a resolution, we have
\begin{equation}\label{eq1}
\textstyle N^1(\widetilde X)_\R= f^*N^1(X)_\R \oplus \bigoplus_{j=1}^k \R[E_j].
\end{equation}
Let $B_1,\dots,B_r$ be integral divisors in $\mathcal A_i$ whose numerical classes generate $N^1(X)_\R$.  Then, by \eqref{eq1} there are rational numbers $p_j,r_j$ such that 
$$\widetilde B\equiv\sum p_jf^*(dB_j)+\sum r_jE_j.$$
Denote $M=\sum p_j\Mob f^*(dB_j)$ and $F=\sum p_j\Fix|f^*(dB_j)|+\sum r_jE_j$. By Theorem \ref{thm:decomposition}(4), there exist ample $\Q$-divisors $A_j$ on $X_i$ such that $\Mob f^*(dB_j)=\psi_i^*A_j$, hence $M\equiv_{X_i}0$. Therefore
$$\widetilde B-F\equiv_{X_i}0.$$
Observe that $\Supp F\subseteq\Supp (F_i+ \sum E_j)$, and that the divisor $F_i+ \sum E_j$ is $\psi_i$-exceptional by Lemma \ref{l:reid}. By (i) and by \cite[Proposition 12.1.4]{KM92}, there is a divisor $T\in\Div_\Q(X_i)$ such that $\widetilde B-F\sim_\Q \psi_i^*T$, and thus the divisor $B=(\psi_i)_*\widetilde B\sim_{\Q}T$ is $\Q$-Cartier.
\end{proof}

It is natural to ask whether the conclusion on $\Q$-factoriality from Theorem \ref{lem:adjoint} can be extended to the general situation of Theorem \ref{thm:decomposition}. We argue below that such a statement is, in general, not true, and we pin down precisely the obstacle to $\Q$-factoriality. The astonishing conclusion is that, in some sense, $\Q$-factoriality of ample models is essentially a condition on the numerical equivalence classes of the divisors in $\Supp\mathfrak R$.

With the notation from Theorem \ref{thm:decomposition}, what we are aiming for is the following statement. We would like to have a (possibly finer) decomposition $\Supp\mathfrak R= \coprod \mcal{N}_i$ together with birational maps $\vphi_i\colon X\dashto X_i$ such that $\vphi_i$ is an optimal model for every $D\in\mcal N_i$, and in particular, every $X_i$ is $\Q$-factorial. It is immediate that, if the numerical classes of the elements of $\mcal N_i$ span $N^1(X)_\R$, then $\vphi_i$ is also the ample model for every $D\in\mcal N_i$.

The following easy result gives us a necessary condition for the ample model of a big divisor to be $\Q$-factorial.

\begin{lem}\label{lem:gen divisors}
Let $X$ be a $\Q$-factorial projective variety, and let $D$ be a big $\Q$-divisor such that the ring $R(X, D)$ is finitely generated, and the map $\vphi \colon X \dashto \Proj R(X,D)$ is $D$-nonpositive. Let $D'$ be a $\Q$-divisor such that $D\equiv D'$.

Then the ring $R(X, D')$ is finitely generated if and only if the $\Q$-divisor $\vphi_*D'$ is $\Q$-Cartier.  
\end{lem}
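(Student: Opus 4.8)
The plan is to exploit the fact that $R(X,D)$ is finitely generated and $\vphi$ is $D$-nonpositive, which by Remark~\ref{rem:1}(i) gives $R(X,D)\simeq R(Y,\vphi_*D)$ with $Y=\Proj R(X,D)$ and $\vphi_*D$ an ample $\Q$-divisor on $Y$. Since $D\equiv D'$, Corollary~\ref{cor:negative} shows that $\vphi$ is automatically $D'$-nonpositive as well; in particular, for a common resolution $(p,q)\colon W\to X\times Y$ we have $p^*D'=q^*\vphi_*D'+E'$ with $E'\ge 0$ and $q$-exceptional, \emph{provided} $\vphi_*D'$ is $\Q$-Cartier. So the content of the statement is the equivalence under exactly that proviso.

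First I would prove the ``only if'' direction. Suppose $R(X,D')$ is finitely generated. Because $D'\equiv D$ is big, Lemma~\ref{lem:equal proj} applies: $\sB(D')=\sB(D)$ and there is an isomorphism $\eta\colon\Proj R(X,D')\to\Proj R(X,D)=Y$ commuting with the two rational maps. Hence the ample model map $X\dashto\Proj R(X,D')$ is, up to this isomorphism, the map $\vphi$ itself, and the pushforward of $D'$ along it is $\eta^*$ of an ample $\Q$-divisor, hence $\Q$-Cartier; unwinding the isomorphism, $\vphi_*D'$ is $\Q$-Cartier on $Y$. (One must check $\vphi$ really is, up to $\eta$, the ample-model map of $D'$; this follows since both are given by the mobile parts of large multiples, and $\eta\circ\vphi_1=\vphi$ in the notation of Lemma~\ref{lem:equal proj}.)

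For the ``if'' direction, assume $\vphi_*D'$ is $\Q$-Cartier. Then $\vphi$ is $D'$-nonpositive by Corollary~\ref{cor:negative}, so Remark~\ref{rem:1}(i) gives $R(X,D')\simeq R(Y,\vphi_*D')$, and it suffices to show the latter is finitely generated. Now $\vphi_*D'\equiv\vphi_*D$ on $Y$ since $D\equiv D'$ and pushforward respects numerical equivalence (one should note here that $\vphi_*D'$ being $\Q$-Cartier is what makes ``$\vphi_*D'\equiv\vphi_*D$'' meaningful). But $\vphi_*D$ is ample, hence $\vphi_*D'$ is an ample $\Q$-divisor too — ampleness is a numerical (indeed open) condition for $\Q$-Cartier divisors — and therefore $R(Y,\vphi_*D')$ is finitely generated. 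Chasing the isomorphism back, $R(X,D')$ is finitely generated.

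\textbf{Main obstacle.} The routine parts are the two applications of Corollary~\ref{cor:negative} and Remark~\ref{rem:1}(i). The delicate point is the ``only if'' direction: one needs that finite generation of $R(X,D')$ forces its $\Proj$ to be identified \emph{compatibly} with $Y$, so that $\vphi_*D'$ inherits $\Q$-Cartierness; this is exactly where Lemma~\ref{lem:equal proj} (which requires both $D$ and $D'$ big and both rings finitely generated) is essential, and it is the only step that genuinely uses the hypothesis that the rational map $\vphi$ is the one attached to $D$, not just any birational contraction. A secondary subtlety is bookkeeping with $\Q$-Cartierness on the possibly non-$\Q$-factorial variety $Y$: numerical equivalence of $\vphi_*D'$ and $\vphi_*D$, and the openness of ampleness, must be invoked only after $\vphi_*D'$ is known to be $\Q$-Cartier.
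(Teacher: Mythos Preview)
Your proposal is correct and follows essentially the same route as the paper: Lemma~\ref{lem:equal proj} for the ``only if'' direction, and Corollary~\ref{cor:negative} together with ampleness of $\vphi_*D$ for the ``if'' direction. One small point of precision: the step ``$\vphi_*D'\equiv\vphi_*D$ since pushforward respects numerical equivalence'' is not valid in general for birational contractions; the paper obtains it from the identity $p^*(D-D')=q^*\vphi_*(D-D')$ given by Corollary~\ref{cor:negative}, together with $D-D'\equiv 0$ and injectivity of $q^*$ on $N^1$, and you should cite it the same way.
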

\begin{proof}
If $R(X,D')$ is finitely generated, then by Lemma \ref{lem:equal proj}, $\vphi$ is equal to the map $X\dashto\Proj R(X,D')$ up to isomorphism. Therefore $\vphi_*D'$ is ample, and in particular $\Q$-Cartier. 

For the converse implication, denote $Y=\Proj R(X,D)$ and let $(p,q)\colon W\lto X\times Y$ be a resolution of $\vphi$. By Lemma \ref{cor:negative}, we have
\[
 p^*(D-D') = q^*\vphi_*(D-D'),
\]
hence $\vphi_*D\equiv\vphi_*D'$. Since $\vphi_*D$ is ample, so is $\vphi_*D'$, hence the ring $R(Y,\varphi_*D')$ is finitely generated. By Lemma \ref{l:reid}, the divisor $E=p^*D-q^*\varphi_*D$ is effective and $q$-exceptional, and since $E=p^*D'-q^*\varphi_*D'$, we have $R(X,D')\simeq R(Y,\varphi_*D')$. 
\end{proof}

Therefore, in the notation of Lemma \ref{lem:gen divisors}, if the ample model of $D$ is $\Q$-factorial, then the ring $R(X,D')$ is finitely generated for every $\Q$-divisor $D'$ in the numerical class of $D$. This motivates the following key definition.

\begin{dfn}\label{dfn:gen}
Let $X$ be a $\Q$-factorial projective variety. We say that a divisor $D\in\Div_\Q(X)$ is \emph{gen} if for all $\Q$-Cartier $\Q$-divisors $D'\equiv D$, the section ring $R(X,D')$ is finitely generated. 
\end{dfn}

There are three main examples of gen divisors of interest to us:
\begin{enumerate}
\item[(i)] ample $\Q$-divisors are gen,
\item[(ii)] every adjoint divisor $K_X+\Delta+A$ is gen, where $A$ is an ample $\Q$-divisor on $X$, and the pair $(X,\Delta)$ is klt; indeed, this follows from Theorem \ref{thmA},
\item[(iii)] if $\Pic(X)_\Q=N^1(X)_\Q$, then every divisor with a finitely generated section ring is gen.
\end{enumerate}
As we show in Section \ref{sec:DMMP}, having lots of gen divisors is essentially equivalent to being able to run a Minimal Model Program. We have seen above that this is a necessary condition for the models to be optimal, and in particular $\Q$-factorial. We show in Theorem \ref{thm:scalingbig} that, remarkably, this is also a sufficient condition. This, together with (ii) and (iii), explains \emph{precisely} why we are able to run the MMP for adjoint divisors and on Mori Dream Spaces, and the details are worked out in Corollaries \ref{cor:lt models} and \ref{cor:MDS}.

We conclude this section with an example where all the conditions of Theorem \ref{thm:decomposition} are satisfied, but the absence of gen divisors implies that there is no decomposition of $\Supp \mathfrak R$ into regions of divisors that share an optimal model. In particular, we cannot run the MMP as explained in Section \ref{sec:DMMP}, and therefore the conditions from Theorems \ref{lem:nullflip} are \ref{thm:scalingbig} are not only sufficient, but they are optimal. The example shows that the finite generation of a divisorial ring in itself is not sufficient to perform the Minimal Model Program.

\begin{exa}\label{exa:notgen}
Let $X$, $L_1$ and $L_2$ be as in Example \ref{exa1}, and note that $X$ is a smooth surface with $\dim N^1(X)_\R=2$. We show that there exist a big divisor $D$ and an ample divisor $A$ on $X$ such that the ring $\mathfrak R=R(X;D,A)$ is finitely generated, the divisor $L_1$ belongs to the interior of the cone $\Supp\mathfrak R=\R_+D+\R_+A$, and \emph{none} of the divisors in the cone $\R_+D+\R_+L_1\subseteq\Supp\mathfrak R$ is gen. In particular, we cannot perform the MMP for $D$.

We first claim that there exists an irreducible curve $C$ on $X$ such that
\begin{equation}\label{equ:curve}
L_1\cdot C=0\quad\text{and}\quad C^2<0.
\end{equation}
Indeed, since $L_1$ is semiample but not ample, there exists an irreducible curve $C\subseteq X$ such that $L_1\cdot C=0$. Since $L_1$ is big and nef, we have $L_1^2>0$, so the Hodge index theorem then implies $C^2<0$. 

Now, set $D = L_1+C$. Since $\dim N^1(X)_\R=2$ and $D$ is not nef, it is immediate that there exists an ample divisor $A$ on $X$ such that $L_1\in\R_+D+\R_+A$. In order to show that $\mathfrak R$ is finitely generated, it is enough to show that the rings $R(X;D,L_1)$ and $R(X;L_1,A)$ are finitely generated, and this latter ring is finitely generated since both $L_1$ and $A$ are semiample. 

For $k_1,k_2\in\N$, consider the divisor $D_{k_1,k_2}=k_1D+k_2L_1=(k_1+k_2)L_1+k_1C$. Then \eqref{equ:curve} implies that $P_\sigma(D_{k_1,k_2})=(k_1+k_2)L_1$, and therefore $H^0(X,D_{k_1,k_2})\simeq H^0(X,(k_1+k_2)L_1)$. Hence the ring
$$ R(X;D,L_1)\simeq R(X;L_1,L_1)$$
is finitely generated. 

Finally, note that $D_{k_1,k_2}\equiv (k_1+k_2)L_2 + k_1C$, and that $P_\sigma((k_1+k_2)L_2 + k_1C)=(k_1+k_2)L_2$. Therefore the ring
\[
 R(X,(k_1+k_2)L_2 + k_1C) \simeq R(X,(k_1+k_2)L_2)
\]
is not finitely generated, thus the divisor $D_{k_1,k_2}$ is not gen.
\end{exa}

\begin{rem}
The notion of genness is a very subtle one. For instance, every $\Q$-divisor $D$ with $\kappa_\sigma(D)=0$ is gen (for the definition and properties of $\kappa_\sigma$ see \cite{Nak04}). Indeed, for every $\Q$-divisor $D'\equiv D$ we have $\kappa(D')\leq\kappa_\sigma(D')=0$, hence the ring $R(X,D)$ is isomorphic to either $\C$ or to the polynomial ring $\C[T]$.
%
\end{rem}
\section{Running the $D$-MMP}\label{sec:DMMP}

Let $X$ be a projective $\Q$-factorial variety, and let $\mathcal C\subseteq\Div_\R(X)$ be a rational polyhedral cone such that the divisorial ring $\mathfrak R=R(X,\mathcal C)$ is finitely generated. Then by Theorem \ref{thm:decomposition} we know that $\Supp\mathfrak R$ has a decomposition into finitely many rational polyhedral cones giving the geography of ample models associated to $\mathfrak R$.

In this section we explain how, when all divisors in the interior of $\Supp\mathfrak R$ are gen, the aforementioned decomposition can be refined to give a geography of optimal models. As indicated in the previous sections, the main technical obstacle is to prove $\Q$-factoriality of models, and this is the point where the gen condition on divisors plays a crucial role.

We assume that $\Supp\mathfrak R$ contains an ample divisor, and fix a divisor $D\in\Supp\mathfrak R$. Then we can run the Minimal Model Program for $D$ as follows.

We define a certain finite rational polyhedral decomposition $\mathcal C=\bigcup\mathcal N_i$ in Theorem \ref{thm:scalingbig}. If $D$ is not nef, we show in Theorem \ref{lem:nullflip} that there is a $D$-negative birational map $\varphi\colon X\dashto X^+$ such that $X^+$ is $\Q$-factorial, and $\vphi$ is elementary -- this corresponds to contractions of extremal rays in the classical MMP. We also show that there is a rational polyhedral subcone $D\in\mathcal C'\subseteq\mathcal C$ which is a union of \emph{some, but not all} of the cones $\mathcal N_i$, such that $R(X,\mathcal C')\simeq R(X^+,\vphi_*\mathcal C')$ and the cone $\varphi_*\mathcal C'\subseteq\Div_\R(X^+)$ contains an ample divisor. Now we replace $X$ by $X^+$, $D$ by $\varphi_*D$, and $\mathcal C$ by $\vphi_*\mathcal C'$, and we repeat the procedure. Since there are only finitely many cones $\mathcal N_i$, this process must terminate with a variety $X_D$ on which the proper transform of $D$ is nef, and this is the optimal model for $D$. It is then automatic that $X_D$ is also an optimal model for all divisors in the cone $\mathcal N_{i_0}\ni D$. The details are given in Theorem \ref{thm:scalingbig}.

In the context of adjoint divisors and the classical MMP, we can additionally \emph{direct} the MMP by an ample $\Q$-divisor $A$ on $X$, as in \cite{CoL10}. The proofs of Theorems \ref{lem:nullflip} and \ref{thm:scalingbig} can be easily modified to obtain the {\em $D$-MMP with scaling of $A$}, however we do not pursue this here.

First we define elementary contractions.

\begin{dfn}
A birational contraction $\varphi\colon X\dashto Y$ between normal projective varieties is \emph{elementary} if it not an isomorphism, and it is either an isomorphism in codimension $1$, or a morphism whose exceptional locus is a prime divisor on $X$.
\end{dfn}

The following theorem is the key result: it shows that in our situation elementary contractions exist.

\begin{thm}\label{lem:nullflip}
Let $X$ be a projective $\Q$-factorial variety and let $\mathcal C\subseteq\Div_\R(X)$ be a rational polyhedral cone. Denote by $\pi\colon\Div_\R(X)\lto N^1(X)_\R$ the natural projection. Assume that the ring $\mathfrak R=R(X,\mcal C)$ is finitely ge\-ne\-ra\-ted, that $\Supp\mathfrak R$ contains an ample divisor, that $\pi(\Supp\mathfrak R)$ spans $N^1(X)_\R$, and that every divisor in the interior of $\Supp\mathfrak R$ is gen. Let $\Supp\mathfrak R=\bigcup\mcal C_i$ be a decomposition as in Theorem \ref{thm:ELMNP}. Let $D\in\Supp\mathfrak R$ be a $\Q$-divisor which is not nef. 
Then:
\begin{enumerate}
\item the cone $\Supp\mathfrak R\cap\pi^{-1}\big(\Nef(X)\big)$ is rational polyhedral, and every element of this cone is semiample,
\item there exists a rational hyperplane $\mathcal H\subseteq N^1(X)_\R$ which intersects the interior of $\pi(\Supp\mathfrak R)$ and contains a codimension $1$ face of $\pi(\Supp\mathfrak R)\cap\Nef(X)$, such that $\pi(D)$ and $\Nef(X)$ are on the opposite sides of $\mathcal H$,
\item let $\mathcal W\subseteq N^1(X)_\R$ be the half-space bounded by $\mathcal H$ which does not contain $\Nef(X)$, and let $\mathcal C'=\Supp\mathfrak R\cap\pi^{-1}(\mathcal W)$. Then there exists a $\Q$-factorial projective variety $X^+$ together with an elementary contraction $\vphi\colon X\dashto X^+$, such that $\varphi$ is $W$-nonpositive for every $W\in\mathcal C'$, and it is $W$-negative for every $W\in\mathcal C'\backslash\pi^{-1}(\mathcal H)$, 
\item we have $R(X,\mathcal C')\simeq R(X^+,\mathcal C^+)$, where $\mathcal C^+=\varphi_*\mathcal C'\subseteq\Div_\R(X^+)$, and $\mcal C^+$ contains an ample divisor,
\item for every cone $\mcal C_i$ and for every geometric valuation $\Gamma$ over $X$, the function $o_\Gamma$ is linear on the cone $\vphi_*(\mcal C'\cap\mcal C_i)\subseteq \mcal C^+$.
\end{enumerate}
\end{thm}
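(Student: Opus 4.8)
The plan is to build the elementary contraction $\vphi\colon X\dashto X^+$ by using a carefully chosen divisor on the boundary hyperplane $\mathcal H$ and then invoke the gen hypothesis to force $\Q$-factoriality of the target. First I would establish (1): by Corollary~\ref{cor:5} the cone $\Supp\mathfrak R\cap\pi^{-1}(\Nef(X))$ is a union of some of the cones $\mathcal C_i$, hence rational polyhedral, and since $\Supp\mathfrak R$ contains an ample divisor, every element of this cone is semiample, again by Corollary~\ref{cor:5}. For (2), since $\pi(D)\notin\Nef(X)$ while $\pi(\Supp\mathfrak R)$ contains an ample class and spans $N^1(X)_\R$, the segment from $\pi(D)$ to an interior ample class crosses $\partial\big(\pi(\Supp\mathfrak R)\cap\Nef(X)\big)$; a supporting hyperplane $\mathcal H$ of $\pi(\Supp\mathfrak R)\cap\Nef(X)$ at a codimension one face met by this segment does the job, and rationality of $\mathcal H$ follows from rationality of $\Nef(X)\cap\pi(\Supp\mathfrak R)$ from part (1). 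Possibly one first shrinks $\mathcal C$ so that $\Supp\mathfrak R$ has nonempty interior in $\pi^{-1}(N^1(X)_\R)$, which is harmless.

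Next, for (3) and (4), I would pick a $\Q$-divisor $G\in\mathcal C'$ whose numerical class $\pi(G)$ lies in the relative interior of the codimension one face $\mathcal H\cap\pi(\Supp\mathfrak R)\cap\Nef(X)$; then $\pi(G)$ is nef but not ample, and by (1) $G$ is semiample, so $G$ has a semiample model which, because $\pi(G)$ lies on a codimension one face and $\pi(\Supp\mathfrak R)$ spans $N^1(X)_\R$, is actually the ample model $\vphi\colon X\to Y$ associated to a contraction of relative Picard number one — i.e.\ $\vphi$ is elementary in the sense just defined. At this point $Y$ need not be $\Q$-factorial, and this is exactly where the gen condition enters: every $\Q$-divisor $G'\equiv G$ has finitely generated section ring, so by Lemma~\ref{lem:gen divisors} every such $\vphi_*G'$ is $\Q$-Cartier; since the classes $\pi(G')$ as $G'$ ranges over $\Q$-divisors numerically equivalent to a basis of $N^1(X)_\R$ span everything, $Y$ is $\Q$-factorial. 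I would then replace $Y$ by this $\Q$-factorial model and call it $X^+$. The $W$-nonpositivity of $\vphi$ for $W\in\mathcal C'$, and $W$-negativity off $\pi^{-1}(\mathcal H)$, follows from Corollary~\ref{cor:negative} (the property depends only on the numerical class of $W$) together with the fact that for $W\in\mathcal C'$ with $\pi(W)$ on the non-nef side, $\vphi$ contracts a $\vphi$-positive-locus worth of curves on which $W$ is negative; more precisely, since $\vphi$ is the ample model of the nef-on-$\mathcal H$ divisor $G$, the divisor $\vphi_*W$ is ample for $W$ strictly on the far side, giving $W$-negativity there, and $W$-nonpositivity on all of $\mathcal C'$ by continuity and Lemma~\ref{l:reid}. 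The isomorphism $R(X,\mathcal C')\simeq R(X^+,\mathcal C^+)$ is then immediate from Remark~\ref{rem:1}(i) applied to each $W\in\mathcal C'\cap\Div(X)$, and $\mathcal C^+$ contains an ample divisor because it contains $\vphi_*W$ for $W$ strictly on the far side of $\mathcal H$.

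Finally, (5) is a direct consequence of Lemma~\ref{lem:2}: for each cone $\mathcal C_i$ in the decomposition of $\Supp\mathfrak R$ from Theorem~\ref{thm:ELMNP}, the function $o_\Gamma$ is linear on $\mathcal C'\cap\mathcal C_i$ (being a subcone of $\mathcal C_i$), and since $\vphi$ is $W$-nonpositive for every $W\in\mathcal C'\supseteq\mathcal C'\cap\mathcal C_i$ by (3), Lemma~\ref{lem:2} shows $o_\Gamma$ is linear on $\vphi_*(\mathcal C'\cap\mathcal C_i)$ as well.

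\textbf{Main obstacle.} The delicate point is producing the $\Q$-factorial elementary contraction in (3): one must simultaneously arrange that $\vphi$ has relative Picard number one (elementarity), that it is exactly $W$-negative for $W$ off $\mathcal H$ and merely $W$-nonpositive on $\mathcal H$ (so that the section ring is preserved but the cone genuinely shrinks), and that the target is $\Q$-factorial. The first two are essentially convex geometry combined with the structure of ample models from Theorem~\ref{thm:decomposition}; the $\Q$-factoriality is where nothing classical applies and the gen hypothesis is indispensable, via Lemma~\ref{lem:gen divisors} and the spanning assumption on $\pi(\Supp\mathfrak R)$. I expect the bulk of the work to be in checking that the hyperplane $\mathcal H$ can be chosen so that the corresponding face of $\pi(\Supp\mathfrak R)\cap\Nef(X)$ has codimension one and meets the interior of $\pi(\Supp\mathfrak R)$ — this is needed for $\vphi$ to be elementary rather than a higher-dimensional contraction.
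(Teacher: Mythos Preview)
Your treatment of (1), (2), and (5) matches the paper's. The gap is in (3): you have built the wrong target variety.

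The morphism you produce, namely $\varphi\colon X\to Y$ with $Y=\Proj R(X,G)$ for $G$ nef on the face $\mathcal H$, is what the paper calls $\theta$. This is the \emph{contraction}, not the model $X^+$. When $\theta$ is small (an isomorphism in codimension~$1$ that is not an isomorphism), $Y$ is \emph{never} $\Q$-factorial: if it were, then for any divisor $P$ on $X$ the difference $P-\theta^*\theta_*P$ would be $\theta$-exceptional hence zero, forcing $P\cdot C=0$ for every contracted curve $C$ and contradicting $\rho(X/Y)=1$. So no argument can make your $Y$ $\Q$-factorial in the flipping case. Your application of Lemma~\ref{lem:gen divisors} is also not doing what you think: applying it to $G'\equiv G$ tells you $\theta_*G'$ is $\Q$-Cartier, but all such $G'$ share the \emph{same} numerical class, so this says nothing about other Weil divisors on $Y$. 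The sentence about ``$G'$ ranging over $\Q$-divisors numerically equivalent to a basis of $N^1(X)_\R$'' does not parse: for classes other than $\pi(G)$ the ample model is not $Y$, so the lemma does not apply.

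What the paper does instead is take $X^+$ to be the ample model of the adjacent \emph{non-nef} chamber $\mathcal C_j$, so that $X^+$ sits over $Y$ via a morphism $\theta^+$ (in the divisorial case $X^+=Y$ and $\varphi=\theta$; in the small case $X^+$ is the flip). The gen hypothesis enters through a different mechanism: one proves that every $\R$-divisor $F$ with $\pi(F)\in\mathcal H$ is $\R$-linearly equivalent to a pullback $\theta^*F_Y$. This is done by choosing $\Q$-divisors $B_1,\dots,B_r$ in $\mathcal C_j\cap\mathcal C_k$ whose classes span $\mathcal H$, writing $\pi(F)=\sum\lambda_i\pi(B_i)$, and replacing $B_1$ by a numerically equivalent $B_1'$ so that $F=\lambda_1B_1'+\sum_{i\ge2}\lambda_iB_i$ on the nose; gen gives that $R(X,B_1')$ is finitely generated, hence $B_1'$ is semiample (nef and big), and Lemma~\ref{lem:equal proj} identifies its semiample fibration with $\theta$. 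With this pullback claim, $\Q$-factoriality of $X^+$ follows by taking an arbitrary Weil divisor $P^+$ on $X^+$, lifting to $P$ on $X$, using full-dimensionality of $\pi(\mathcal C_j)$ to find $G\in\mathcal C_j$ and $\alpha\in\Q$ with $\pi(P+\alpha G)\in\mathcal H$, and concluding $P^+\sim_\Q(\theta^+)^*M-\alpha A$ for $M\in\Div_\Q(Y)$ and $A$ ample on $X^+$. The same claim drives both the proof that $\varphi$ is elementary and the $W$-nonpositivity/negativity statements.
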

\begin{figure}[htb]
\begin{center}
\includegraphics[width=0.4\textwidth]{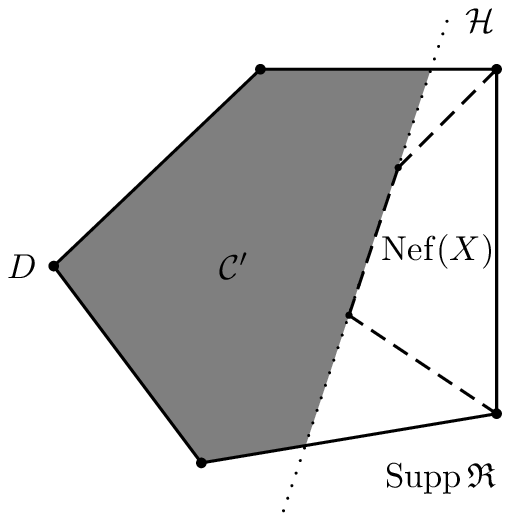}
\end{center}
\end{figure}

\begin{proof}
{\em Step 1.}
The statement (1) follows immediately from Corollary \ref{cor:5}, statement (4) follows from (3) by Remark \ref{rem:1}(i) and from the construction below, while (5) follows from (3) by Lemma \ref{lem:2}. To show (2), let $\alpha$ be any ample class in the interior of $\pi(\Supp\mathfrak R)\subseteq N^1(X)$, and let $\beta$ be the intersection of the segment $[\pi(D),\alpha]$ with $\partial\Nef(X)$. Then $\beta$ lies in the interior of $\pi(\Supp\mathfrak R)$, and by (1) there is a rational codimension $1$ face of $\pi(\Supp\mathfrak R)\cap\Nef(X)$ containing $\beta$. We define $\mathcal H$ to be the rational hyperplane containing that face.\\[2mm]
{\em Step 2.}
It remains to show (3). By Corollary \ref{cor:5}, there are cones $\mathcal C_j\nsubseteq\pi^{-1}\big(\Nef(X)\big)$ and $\mathcal C_k\subseteq\pi^{-1}\big(\Nef(X)\big)$ such that $\dim\pi(\mathcal C_j)=\dim\pi(\mathcal C_k)=\dim N^1(X)_\R$ and $\pi(\mathcal C_j)\cap\pi(\mathcal C_k)\subseteq\mathcal H$; denote $\mathcal C_{jk}=\mathcal C_j\cap\mathcal C_k$. Let $\varphi\colon X\dashrightarrow X^+$ and $\theta\colon X\dashrightarrow Y$ be the ample models associated to relative interiors of $\mathcal C_j$ and $\mathcal C_{jk}$ as in the proof of Theorem \ref{thm:decomposition}, and note that $\theta$ is a morphism by (1) since $\mathcal C_{jk}\subseteq\Supp\mathfrak R\cap\pi^{-1}\big(\Nef(X)\big)$. Then, by Theorem \ref{thm:decomposition}(3), there is a morphism $\theta^+\colon X^+\longrightarrow Y$ such that the diagram 
\[
\xymatrix{ 
X \ar@{-->}[rr]^{\varphi} \ar[dr]_\theta & \quad & X^+ \ar[dl]^{\theta^+}\\
\quad & Y & \quad
}\]
is commutative. The following is the key claim:
\begin{cla}\label{claim:pullback}
Let $F$ be an $\R$-divisor on $X$ such that $\pi(F)\in \mcal{H}$. Then $F\sim_{\R} \theta^*F_Y$ for some $F_Y\in \Div_{\R}(Y)$. If additionally  $\pi(F)\in\pi(\mathcal C_{jk})$, then $F_Y$ is ample. In particular, a curve $C$ is contracted by $\theta$ if and only if $C\cdot\delta=0$ for every $\delta\in\mathcal H$.
\end{cla}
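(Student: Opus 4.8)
The plan is to prove Claim \ref{claim:pullback} in three stages, exploiting the fact that $\mathcal H$ is, by construction, the rational hyperplane spanned by a codimension $1$ face of $\pi(\Supp\mathfrak R)\cap\Nef(X)$ lying on the boundary of $\Nef(X)$, and that $\theta$ is the ample model associated to the interior of $\mathcal C_{jk}$, whose image $\pi(\mathcal C_{jk})$ is full-dimensional inside $\mathcal H$.

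First I would treat the case $\pi(F)\in\pi(\mathcal C_{jk})$. Here $F$ is numerically equivalent to a divisor $F'\in\mathcal C_{jk}\subseteq\Supp\mathfrak R\cap\pi^{-1}(\Nef(X))$, so by part (1) of the theorem $F'$ is semiample, and moreover $\theta$ is (up to isomorphism) the semiample fibration / ample model of $F'$ by the construction in the proof of Theorem \ref{thm:decomposition}; thus $F'\sim_\Q\theta^*A'$ for an ample $\Q$-divisor $A'$ on $Y$ (using that $\theta$ is a morphism, as noted, and that $\dim\pi(\mathcal C_{jk})=\dim\mathcal H$ forces $F'_Y$ to be ample on the image $Y$, not merely nef). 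The subtlety is passing from $F'$ back to $F$ itself: since $F\equiv F'$ and $F$ need not be effective, I would invoke the gen hypothesis. Because $\pi(\mathcal C_{jk})$ meets the interior of $\pi(\Supp\mathfrak R)$, we may write $F$, after scaling, as a difference of two divisors whose classes lie in the interior of $\pi(\Supp\mathfrak R)$ and hence are gen; applying Lemma \ref{lem:gen divisors} to $\theta$ (which is the ample model of a suitable big interior divisor $G$ with $G\equiv$ a translate, and is $G$-nonpositive by Theorem \ref{thm:decomposition}(4)) shows that $\theta_*F$ is $\Q$-Cartier. Then Corollary \ref{cor:negative} gives $p^*F-q^*\theta_*F=p^*F'-q^*\theta_*F'$ on a common resolution, and since the right-hand side is $q$-exceptional and $\equiv_Y 0$, the Negativity lemma (Lemma \ref{lem:negativity}) forces it to vanish; so $F\sim_\R\theta^*\theta_*F$ with $\theta_*F\equiv\theta_*F'=A'$ ample, hence $F_Y:=\theta_*F$ is ample.

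Second, for a general $F$ with $\pi(F)\in\mathcal H$ but not necessarily in $\pi(\mathcal C_{jk})$, I would use linearity: pick any $F_0$ with $\pi(F_0)$ in the relative interior of $\pi(\mathcal C_{jk})$ and apply the previous paragraph to both $F_0$ and $F_0+tF$ for small rational $t>0$ (whose class still lies in $\pi(\mathcal C_{jk})$ since $\pi(\mathcal C_{jk})$ is full-dimensional in $\mathcal H$ and $\pi(F)\in\mathcal H$); subtracting, $F\sim_\R\theta^*F_Y$ with $F_Y=\tfrac1t((F_0+tF)_Y-(F_0)_Y)\in\Div_\R(Y)$, proving the first assertion in full generality. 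Finally, the statement about curves is a formal consequence: if $C$ is contracted by $\theta$ then $F\cdot C=\theta^*F_Y\cdot C=0$ for every $F$ with $\pi(F)\in\mathcal H$, i.e.\ $C\cdot\delta=0$ for all $\delta\in\mathcal H$; conversely if $C\cdot\delta=0$ for all $\delta\in\mathcal H$, then taking $\delta$ to be the class of a $\theta$-ample divisor (which lies in $\mathcal H$ by the first part applied to $\mathcal C_{jk}$), $C$ has zero intersection with a $\theta$-ample class and is therefore contracted by $\theta$.

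The main obstacle I anticipate is the first stage, specifically the legitimacy of descending from $F'$ to the numerically equivalent $F$ when $F$ itself is not effective and its section ring is not directly under control — this is precisely where the gen hypothesis is indispensable, and one must be careful to express $F$ as a difference of gen divisors with classes genuinely in the \emph{interior} of $\pi(\Supp\mathfrak R)$ so that Lemma \ref{lem:gen divisors} applies, and then to chain Corollary \ref{cor:negative} with Lemma \ref{lem:negativity} correctly to upgrade "$\theta_*F$ is $\Q$-Cartier" to "$F\sim_\R\theta^*\theta_*F$". Everything else is bookkeeping with the polyhedral geometry of the decomposition $\Supp\mathfrak R=\bigcup\mathcal C_i$ and the established properties of ample models from Theorem \ref{thm:decomposition}.
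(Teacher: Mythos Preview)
Your proposal is essentially correct and reaches the conclusion, but it takes a somewhat different route from the paper, and one step is mis-specified.

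The paper's argument is more direct and avoids your two-stage case split. It picks $\Q$-divisors $B_1,\dots,B_r\in\mathcal C_{jk}$ whose classes span $\mathcal H$, writes $\pi(F)=\sum\lambda_i\pi(B_i)$, and absorbs the entire numerical discrepancy into a single term by setting $B_1'$ so that $F=\lambda_1 B_1'+\sum_{i\ge 2}\lambda_i B_i$ and $B_1'\equiv B_1$. For $i\ge 2$ one already has $B_i\sim_\Q\theta^*A_i$ with $A_i$ ample, by the definition of $\theta$. For $B_1'$, the gen hypothesis on $B_1$ gives $R(X,B_1')$ finitely generated; since $B_1'$ is nef and big, Lemma~\ref{lem:ords}(ii) makes it semiample, and then Lemma~\ref{lem:equal proj} identifies its semiample fibration with $\theta$ up to isomorphism, so $B_1'\sim_\Q\theta^*A_1$ with $A_1$ ample. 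Summing gives $F\sim_\R\theta^*(\sum\lambda_i A_i)$; when $\pi(F)\in\pi(\mathcal C_{jk})$ one can take all $\lambda_i\ge 0$, so $F_Y$ is ample. Both assertions fall out at once.

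Your route replaces the appeal to Lemma~\ref{lem:equal proj} by Lemma~\ref{lem:gen divisors} plus Corollary~\ref{cor:negative}: first show $\theta_*F$ is $\Q$-Cartier, then force $F-\theta^*\theta_*F$ to vanish. This is a legitimate alternative, but your description of how to invoke Lemma~\ref{lem:gen divisors} is garbled. Writing $F$ as a difference of two divisors with classes ``in the interior of $\pi(\Supp\mathfrak R)$'' is not what that lemma requires: to apply it with $\varphi=\theta$ you need a big $\Q$-divisor $D$ with $\theta\colon X\to\Proj R(X,D)$ and $D\equiv F$, and such $D$ live in the \emph{relative interior of $\mathcal C_{jk}$}, not merely in the interior of the support. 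Once you pick $D$ there (which is also where the gen hypothesis bites, exactly as in the paper's choice of $B_1$), the rest of your first stage goes through, and your second stage (extend to all of $\mathcal H$ by considering $F_0$ and $F_0+tF$ and subtracting) is fine. So the ``difference of gen divisors'' detour is unnecessary and, as written, does not actually feed into Lemma~\ref{lem:gen divisors}; the fix is simply to choose $D$ correctly. The paper's basis-decomposition trick is a tighter packaging of the same idea, trading your Corollary~\ref{cor:negative}/Negativity bookkeeping for the more geometric Lemma~\ref{lem:equal proj}.
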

Pick $\Q$-divisors $B_1,\dots,B_r$ in $\mcal{C}_{jk}$ and nonzero real numbers $\lambda_i$ such that $\pi(B_i)$ span $\mcal{H}$ and $\pi(F)= \sum \lambda_i \pi(B_i)$. We may assume that $\lambda_i\geq0$ for all $i$ when $\pi(F)\in\pi(\mcal C_{jk})$.  Hence, there is a $\Q$-divisor $B'_1\equiv B_1$ such that $F= \lambda_1 B'_1 +\sum_{i\geq 2} \lambda_i B_i$. Note that, by the definition of $\theta$, there are ample divisors $A_i$ on $Y$ such that $B_i\sim_\Q \theta^*A_i$ for all $i\geq 2$.

Since $B_1$ is gen, the ring $R(X, B'_1)$ is finitely generated, and therefore $B_1'$ is semiample by Lemma \ref{lem:ords}(2) as it is nef and big. Denote by $\theta'\colon X \lto Y'$ the semiample fibration associated to $B_1'$. By Lemma \ref{lem:equal proj}, there is an isomorphism $\eta \colon Y\lto Y'$ such that $\theta'= \eta\circ \theta$. Since $B_1'\sim_\Q(\theta')^*A_1'$ for an ample divisor $A_1'$ on $Y'$, we have $B_1'\sim_\Q \theta^*A_1$, where $A_1=\eta^*A_1'$. Therefore $F\sim_\R \theta^*(\sum \lambda_i A_i)$, which proves the claim.
\\[2mm]
{\em Step 3.}
We next show that $X^+$ is $\Q$-factorial.

Consider a Weil divisor $P^+$ on $X^+$, and let $P$ be its proper transform on $X$. Since $X$ is $\Q$-factorial, the divisor $P$ is $\Q$-Cartier. Since $\dim \pi (\mcal{C}_j)= \dim N^1(X)_\R$, there exist a $\Q$-divisor $G \in \mcal{C}_j$ and  $\alpha\in \Q$ such that $\pi(P+\alpha G)\in \mcal{H}$. By Claim \ref{claim:pullback}, there exists $M\in \Div_{\Q}(Y)$ such that $P+\alpha G\sim_{\Q}\theta^* M$. Let $(p,q)\colon \widetilde{X}\longrightarrow X \times X^+$ be a resolution of $\vphi$. By the definition of $\vphi$ and by Theorem \ref{thm:decomposition}, there is an ample $\Q$-divisor $A$ on $X^+$ and a $q$-exceptional $\Q$-divisor $E$ on $\widetilde X$ such that $p^*G= q^*A+E$. It follows that $p^*P\sim_\Q (\theta\circ p)^*M-\alpha(q^*A+E)= (\theta^+\circ q)^*M-\alpha q^*A-\alpha E$. Since $\vphi$ is a contraction, we have $P^+=q_*p^*P$, and therefore the divisor
$$P^+\sim_\Q(\theta^+)^*M-\alpha A$$
is $\Q$-Cartier.\\[2mm]
{\em Step 4.}
In this step we show that $\vphi$ is an elementary map. 

If $\theta$ is an isomorphism in codimension $1$, then so are $\vphi$ and $\theta^+$ as $\vphi$ is a contraction. 

Hence, we may assume that there exists a $\theta$-exceptional prime divisor $E$. Let $C$ be a curve contracted by $\theta$, and let $R$ be a ray in $N_1(X)_\R$ orthogonal to the hyperplane $\mathcal H$. Then the class of $C$ belongs to $R$ by Claim \ref{claim:pullback}, and so $E\cdot R<0$ by Lemma \ref{lem:negativity}. In particular, we have $E\cdot C<0$, thus $C\subseteq E$, and the exceptional locus of $\theta$ equals $E$. Therefore, $\theta$ is an elementary contraction. 

Observe that $\pi(E)$ and $\Nef(X)$ lie on opposite sides of $\mathcal H$. This implies that there is a $\Q$-divisor $G_E$ in the relative interior of $\mathcal C_j$ such that $\pi(G_E-E)$ belongs to the relative interior of $\mathcal C_{jk}$. Then, by Claim \ref{claim:pullback}, there exists an ample divisor $M_E\in\Div_\Q(Y)$ such that $G_E-E\sim_\Q\theta^*M_E$, and thus
\begin{equation}\label{equ:isomorphism}
H^0(X,mG_E)\simeq H^0(X,m \theta^*M_E)
\end{equation}
for every positive integer $m$. Since $\vphi$ is the map $X\dashto\Proj R(X,G_E)$ by definition, we may assume that $X^+=Y$ and $\vphi=\theta$ by \eqref{equ:isomorphism}, which shows that $\vphi$ is an elementary contraction.\\[2mm]
{\em Step 5.}
The only thing left to prove is the last statement in (3). For $W\in\mcal C'$, there exists an $\R$-divisor $G_W\in\mcal C_j$ such that $\pi(W-G_W)\in \mcal{H}$. Thus $W\equiv_Y G_W$ by Claim \ref{claim:pullback}. Since $\varphi$ is $G_W$-nonpositive by Theorem \ref{thm:decomposition}(4), this implies that $\varphi$ is $W$-nonpositive by Corollary \ref{cor:negative}. If $\varphi$ is an isomorphism in codimension $1$, it is automatic that it is then also $W$-negative.

If $W\in\mathcal C'\backslash\pi^{-1}(\mathcal H)$ and $\vphi$ contracts a divisor $E$, there exists a positive rational number $\lambda$ such that $\pi(W-\lambda E)\in\mathcal H$. Again by Claim \ref{claim:pullback}, and since $X^+=Y$ and $\varphi=\theta$, there is a divisor $M_W\in\Div_\R(X^+)$ such that $W-\lambda E\sim_\R\vphi^*M_W$. But then it is clear that $\vphi$ is $W$-negative.
\end{proof}


The following is the main result of this paper -- the geography of optimal models.

\begin{thm}\label{thm:scalingbig}
Let $X$ be a projective $\Q$-factorial variety, and let $\mathcal C\subseteq\Div_\R(X)$ be a rational polyhedral cone. Denote by $\pi\colon\Div_\R(X)\lto N^1(X)_\R$ the natural projection. Assume that the ring $\mathfrak R=R(X,\mcal C)$ is finitely ge\-ne\-ra\-ted, that $\Supp\mathfrak R$ contains an ample divisor, that $\pi(\Supp\mathfrak R)$ spans $N^1(X)_\R$, and that every divisor in the interior of $\Supp\mathfrak R$ is gen. 

Then for any $\Q$-divisor $D\in\mathcal C$, we can run a $D$-MMP which terminates.

Furthermore, there is a finite decomposition
$$\Supp\mathfrak R= \coprod \mcal{N}_i$$
into cones having the following properties:
\begin{enumerate}
\item each $\overline{\mathcal N_i}$ is a rational polyhedral cone,
\item for each $i$, there exists a $\Q$-factorial projective variety $X_i$ and a birational contraction $\vphi_i\colon X\dashto X_i$ such that $\varphi_i$ is an optimal  model for every divisor in $\mathcal N_i$,
\item every element of the cone $(\vphi_i)_*\mcal N_i$ is semiample.
\end{enumerate}
\end{thm}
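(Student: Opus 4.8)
The plan is to run the $D$-MMP by iterating Theorem~\ref{lem:nullflip}, and then to read off the decomposition $\coprod\mathcal N_i$ from the finitely many models produced along the way. Fix a $\Q$-divisor $D\in\Supp\mathfrak R$ (for $D\in\mathcal C\setminus\Supp\mathfrak R$ the $D$-MMP terminates with a Mori fibre space by a variant of the same argument, so I concentrate on $\Supp\mathfrak R$). If $D$ is nef, then $D$ lies in $\Supp\mathfrak R\cap\pi^{-1}(\Nef(X))$ and is semiample by Theorem~\ref{lem:nullflip}(1), so the identity is an optimal model and the $D$-MMP has length zero. Otherwise Theorem~\ref{lem:nullflip} produces a rational polyhedral subcone $\mathcal C'\subseteq\Supp\mathfrak R$ with $D\in\mathcal C'\setminus\pi^{-1}(\mathcal H)$, and an elementary contraction $\varphi\colon X\dashto X^+$ onto a $\Q$-factorial projective variety which is $D$-negative, with $R(X,\mathcal C')\simeq R(X^+,\mathcal C^+)$ for $\mathcal C^+=\varphi_*\mathcal C'$, and with $\mathcal C^+$ containing an ample divisor. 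I would then replace the triple $(X,\mathcal C,D)$ by $(X^+,\mathcal C^+,\varphi_*D)$ and repeat the construction.

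For this iteration to be legitimate one must check that $(X^+,\mathcal C^+)$ again satisfies \emph{all} the hypotheses of Theorem~\ref{lem:nullflip}. Finite generation of $R(X^+,\mathcal C^+)$, and the fact that its support is $\mathcal C^+$ and contains an ample divisor, are immediate from Theorem~\ref{lem:nullflip}(4); that $\pi(\mathcal C^+)$ spans $N^1(X^+)_\R$ follows since $\pi(\mathcal C')$ spans $N^1(X)_\R$ (because $\mathcal H$ meets the interior of $\pi(\Supp\mathfrak R)$) and $\varphi_*$ is either an isomorphism on $N^1$, when $\varphi$ is small, or a linear surjection onto $N^1(X^+)_\R$, when $\varphi$ contracts a divisor. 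The one serious point — and the place where the gen hypothesis is actually consumed — is to show that every divisor in the interior of $\mathcal C^+$ is again gen. Such a divisor is $\varphi_*W$ with $W$ in the interior of $\mathcal C'$, hence in the interior of $\Supp\mathfrak R$ and so gen on $X$; given a $\Q$-Cartier $F^+\equiv\varphi_*W$ on $X^+$, I must deduce that $R(X^+,F^+)$ is finitely generated. When $\varphi$ is an isomorphism in codimension $1$, the divisor $F=(\varphi^{-1})_*F^+$ is $\Q$-Cartier on $X$ and numerically equivalent to $W$, so $R(X,F)$ is finitely generated; since $\varphi$ is $W$-nonpositive it is $F$-nonpositive by Corollary~\ref{cor:negative}, whence $R(X^+,F^+)\simeq R(X,F)$ by Remark~\ref{rem:1}(i). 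When $\varphi=\theta$ contracts a prime divisor $E$, one uses $\theta_*\OO_X=\OO_{X^+}$ and $\Q$-factoriality of $X^+$ to get $R(X^+,F^+)\simeq R(X,\theta^*F^+)$; writing $W=\theta^*\theta_*W+cE$ with $c\geq0$ — the content of $W$-nonpositivity — one finds $\theta^*F^++cE\equiv W$, so $R(X,\theta^*F^++cE)$ is finitely generated; finally Corollary~\ref{cor:negative} applied to the members of $|W|_\R$ yields $o_E(W)\geq c$, and since $\theta^*F^+$ is big (being numerically equivalent to the pullback of the big divisor $\varphi_*W$), Lemma~\ref{d_sigma} gives $0\leq cE\leq N_\sigma(\theta^*F^++cE)$ and hence that $R(X,\theta^*F^+)$ and $R(X,\theta^*F^++cE)$ share a Veronese subring. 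Either way $R(X^+,F^+)$ is finitely generated. I expect this gen-persistence step to be the main obstacle of the whole proof.

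Granted this, termination of the $D$-MMP is a chamber count in the spirit of \cite{CoL10}: by Theorem~\ref{lem:nullflip}(5) the images $\varphi_*(\mathcal C'\cap\mathcal C_i)$ of the chambers of the Theorem~\ref{thm:ELMNP}(3)-subdivision of $\Supp\mathfrak R$ form a subdivision of $\mathcal C^+$ of the same kind, so the number of maximal-dimensional chambers does not increase; and it strictly decreases, because the maximal-dimensional chambers of $\Supp\mathfrak R$ that lie in $\pi^{-1}(\Nef(X))$ are non-trivial — they exist by Theorem~\ref{lem:nullflip}(1) and the presence of an ample divisor — and are not contained in $\mathcal C'$, so contribute only lower-dimensional pieces to $\mathcal C^+$. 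Hence after finitely many steps the procedure reaches a birational contraction $\varphi_D\colon X\dashto X_D$ with $X_D$ projective and $\Q$-factorial and $(\varphi_D)_*D$ nef, hence semiample by Theorem~\ref{lem:nullflip}(1); being a composition of $D$-negative elementary contractions, $\varphi_D$ is itself $D$-negative, so it is an optimal model of $D$.

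For the decomposition, I would run the above MMP for $D$ varying over $\Supp\mathfrak R$; only finitely many contractions $\varphi_i\colon X\dashto X_i$ arise, since each $D$-MMP has at most as many steps as there are maximal chambers in the Theorem~\ref{thm:ELMNP}(3)-subdivision, and Theorem~\ref{lem:nullflip} offers only finitely many choices of wall at each step. Let $\mathcal N_i$ be the set of those $D\in\Supp\mathfrak R$ for which $\varphi_i$ is an optimal model. Using that $F\mapsto p_i^*F-q_i^*(\varphi_i)_*F$ is linear (Corollary~\ref{cor:negative}, for a fixed resolution $(p_i,q_i)$ of $\varphi_i$) and that $\{D\in\Supp\mathfrak R\mid(\varphi_i)_*D\in\Nef(X_i)\}$ is rational polyhedral (Theorem~\ref{lem:nullflip}(1) applied on $X_i$), one sees that $\mathcal N_i$ is cut out inside this polyhedral cone by finitely many closed and open rational half-spaces, so $\overline{\mathcal N_i}$ is a rational polyhedral cone; and $\bigcup_i\mathcal N_i=\Supp\mathfrak R$ by the first part of the proof. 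Passing to the common refinement of the finitely many cones $\overline{\mathcal N_i}$ and reindexing accordingly produces a genuine disjoint decomposition $\Supp\mathfrak R=\coprod\mathcal N_i$ into relatively open rational polyhedral cones, each admitting a common optimal model $\varphi_i\colon X\dashto X_i$, which gives (1) and (2). Finally (3) holds because $(\varphi_i)_*\mathcal N_i$ is contained in $\Nef(X_i)$, and all of its elements are semiample by Theorem~\ref{lem:nullflip}(1) applied on $X_i$ to the divisorial ring obtained there at the end of the corresponding MMP.
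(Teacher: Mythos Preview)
Your proof follows the same strategy as the paper --- iterate Theorem~\ref{lem:nullflip} and terminate by a chamber count --- but two points deserve comment.

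First, you take seriously the question of whether the hypotheses of Theorem~\ref{lem:nullflip} persist on $X^+$, and in particular you prove that every divisor in the interior of $\mathcal C^+$ is again gen. The paper simply writes ``in this way we construct a sequence\ldots'' and leaves this verification implicit, so your treatment here is more complete. Your argument is essentially correct; the only soft spot is the divisorial case, where your appeal to Lemma~\ref{d_sigma} for the numerical invariance of $o_E$ and for $cE\le N_\sigma(\theta^*F^++cE)$ needs $W$ (equivalently $\theta^*F^++cE$) to be big, and nothing in the hypotheses forces an arbitrary interior point of $\Supp\mathfrak R$ to be big. This is easily repaired: since $\varphi_*$ is linear and $\ker\varphi_*|_{\Div_\R(X)}=\R E\subseteq\mathcal C'$, one may choose the preimage $W$ of a given interior point of $\mathcal C^+$ to lie on the segment joining it to an ample class inside $\mathcal C'$, hence big.

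Second, the paper constructs the decomposition $\coprod\mathcal N_i$ more economically than you do. Rather than running a $D$-MMP for every $D$, arguing finiteness of the resulting models, and then refining the sets $\{D:\varphi_i\text{ is optimal for }D\}$ into a genuine partition, the paper refines the Theorem~\ref{thm:ELMNP} fan \emph{once} on $X$ so that any hyperplane through a codimension-one face of any cell leaves every cell on a single side (property $(\natural)$), and takes $\mathcal N_i$ to be the relative interiors of the resulting cells. Each application of Theorem~\ref{lem:nullflip} then discards precisely the cells on the nef side of the wall $\mathcal H$, so the index set strictly shrinks; this yields termination and simultaneously shows that the resulting model is optimal for the whole cell $\mathcal N_{i_0}\ni D$, without any a posteriori bookkeeping.
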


\begin{proof}
Denote by $V\subseteq\Div_\R(X)$ the minimal vector space containing $\mcal C$, and define $\mcal C^1=\Supp\mathfrak R$. Let $\mcal C^1= \bigcup_{i\in I_1}\mcal C_i^1$ be the rational polyhedral decomposition as in Theorem \ref{thm:ELMNP}. By subdividing $\mcal C^1$ further, we may assume that the following property is satisfied:
\begin{itemize}
\item[($\natural$)] let $\mathcal G\subseteq V$ be any hyperplane which contains a codimension $1$ face of some $\mathcal C_{i_0}^1$. Then every $\mathcal C_i^1$ is contained in one of the two half-spaces of $V$ bounded by $\mathcal G$.
\end{itemize}
For each $i\in I_1$, let $\mathcal N_i$ be the relative interior of $\mathcal C_i$. We claim that $\mcal C^1=\coprod_{i\in I_1} \mcal{N}_i$ is the desired decomposition.

Let $D$ be a point in some $\mathcal N_{i_0}$. If $D$ is nef, then every divisor in $\mathcal N_{i_0}$ is semiample by Corollary \ref{cor:5}, so the theorem follows.

Therefore, we may assume that $D$ is not nef. Denote $Y_1=X$ and $D_1=D$. We show that there exists a $D_1$-MMP which terminates.

By Theorem \ref{lem:nullflip}, the cone $\mcal C^1\cap \pi^{-1}\bigl(\Nef (Y_1)\bigr)$ is rational polyhedral. Let $\mathcal H\subseteq N^1(Y_1)_\R$ be a rational hyperplane as in Theorem \ref{lem:nullflip}, and let $\mathcal C_{\ell}^1$, for $\ell\in I_2\subsetneq I_1$, be those cones for which $\pi(\mathcal C_{\ell}^1)$ and $\pi(D)$ are on the same side of $\mathcal H$, cf.\ ($\natural$). Let $f_1\colon Y_1\dashrightarrow Y_2$ be an elementary map as in Theorem \ref{lem:nullflip}(3), and denote $D_2=(f_1)_*D_1$. Define rational polyhedral cones $\mathcal C_{\ell}^2=(f_1)_*\mathcal C^1_{\ell}\subseteq\Div_{\R}(Y_2)$, and set 
\begin{equation}\label{equ:decomposition}
\textstyle\mathcal C^2= \bigcup_{\ell\in I_2} \mathcal C^2_{\ell}.
\end{equation}
Then the ring $\mathfrak R^2=R(Y_2,\mathcal C^2)$ is finitely generated by Theorem \ref{lem:nullflip}(4). By Theorem \ref{lem:nullflip}(5), the relation \eqref{equ:decomposition} gives a decomposition of $\mcal C^2$ as in Theorem \ref{thm:ELMNP}. Also note that $(f_1)_*(\mcal N_{i_0})\subseteq\mcal C^2$. 

In this way we construct a sequence of divisors $D_p$ on $\Q$-factorial varieties $Y_p$. Since the size of the index sets $I_p$ drops with each step,  this process must terminate with a model $X_{p_0}$ on which the divisor $D_{p_0}$ is nef. Similarly as above, $X_{p_0}$ is an optimal model for all divisors in $\mathcal N_{i_0}$, and the proper transform on $Y_{p_0}$ of every element of $\mcal N_{i_0}$ is semiample.
\end{proof}

\begin{cor}\label{cor:stepsMMP}
Let $X$ be a projective $\Q$-factorial variety, let $S_1,\dots,S_p$ be distinct prime divisors on $X$, denote $V=\sum_{i=1}^p\mathbb R S_i\subseteq \Div_{\mathbb R}(X)$, and let $A$ be an ample $\mathbb Q$-divisor on $X$. Let $\mcal C\subseteq\mcal L(V)$ be a rational polytope such that for every $\Delta\in \mcal C$, the pair $(X,\Delta)$ is klt.

Then there exists a positive integer $M$ such that for every $\Delta\in\mcal C\cap\mathcal E_A(V)$, there is a $(K_X+\Delta)$-MMP consisting of at most $M$ steps.
\end{cor}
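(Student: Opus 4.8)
The plan is to deduce the statement from Theorem~\ref{thm:scalingbig}, applied to a \emph{single} finitely generated ring $\mathfrak R$ chosen large enough to accommodate all the divisors $K_X+\Delta+A$ with $\Delta\in\mathcal C\cap\mathcal E_A(V)$ at once, with $M$ equal to the number of chambers in the resulting geography. First I would take $B_1,\dots,B_s$ to be the vertices of the rational polytope $\mathcal C$, so that $R(X;K_X+B_1+A,\dots,K_X+B_s+A)$ is finitely generated by Theorem~\ref{thmA}(1), with support $\R_+\big(K_X+A+\mathcal C\cap\mathcal E_A(V)\big)$ by Corollary~\ref{cor:effective}. This ring on its own need not fulfil the hypotheses of Theorem~\ref{thm:scalingbig}: $\pi$ of its support is typically not full-dimensional in $N^1(X)_\R$, and the support may contain no ample divisor. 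I would remedy this by enlarging the list of generators, adjoining divisors $K_X+B_1+A+H_k$ for finitely many ample $\Q$-divisors $H_k$, chosen so that $\pi$ of the cone $\mathcal D$ spanned by all the generators is full-dimensional and contains an ample class. Since each new generator is again of the form $K_X+(\text{klt boundary})+(\text{ample})$, the enlarged ring $\mathfrak R=R(X,\mathcal D)$ is still finitely generated by Theorem~\ref{thmA}(1).

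The point that makes the gen hypothesis of Theorem~\ref{thm:scalingbig} hold is that any nonzero $\Q$-divisor in $\mathcal D$ is, after clearing denominators, of the form $c(K_X+\Delta'+A')$ with $c\in\Q_{>0}$, with $\Delta'$ a convex combination of the $B_i$ (hence $(X,\Delta')$ klt, as $\mathcal C$ consists of klt boundaries), and with $A'$ ample. By Theorem~\ref{thmA}(1) the divisor $K_X+\Delta'+A'$ is gen in the sense of Definition~\ref{dfn:gen}, and a positive rational multiple of a gen divisor is gen by Lemma~\ref{lem:3}; so every $\Q$-divisor in $\Supp\mathfrak R$, in particular in its interior, is gen, and all the hypotheses of Theorem~\ref{thm:scalingbig} are in force. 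That theorem then yields a finite decomposition $\Supp\mathfrak R=\coprod_{i=1}^M\mathcal N_i$ and, for every $\Q$-divisor in $\Supp\mathfrak R$, a terminating MMP. Examining its proof, each step of such an MMP strictly shrinks the index set $I_p\subseteq I_1$ of the refined Theorem~\ref{thm:ELMNP} decomposition, so every MMP produced consists of at most $|I_1|$ steps; setting $M=|I_1|$, this bound depends only on $\mathfrak R$ and not on the chosen divisor.

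It remains to match the $(K_X+\Delta)$-MMP with one of these. For $\Delta\in\mathcal C\cap\mathcal E_A(V)$ one runs the $(K_X+\Delta)$-MMP \emph{with scaling of $A$} (see the remark after Theorem~\ref{thm:scalingbig}): as $t$ decreases from a value where $K_X+\Delta+tA$ is ample, the procedure is guided by the divisors $K_X+\Delta+tA$, which lie in $\Supp\mathfrak R$ over the relevant range of $t$, and --- invoking the variant of Theorems~\ref{lem:nullflip} and \ref{thm:scalingbig} directed by scaling, which the paper states is a routine modification --- it is exactly an MMP of the type produced by Theorem~\ref{thm:scalingbig}, tracing the path through the chambers $\mathcal N_i$ along the segment $\{K_X+\Delta+tA\}$; since a line segment meets each convex piece $\mathcal N_i$ in a connected set, it crosses at most $M$ of them, and by the bound above the MMP has at most $M$ steps. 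The main obstacle is this last identification, and above all its \emph{uniformity} in $\Delta$: one must verify that the scaling MMP genuinely corresponds to a path through the $\mathcal N_i$, and that a single finitely generated ring $\mathfrak R$ --- obtained by adjoining finitely many ample generators to the adjoint ring supported on $\R_+(K_X+A+\mathcal C\cap\mathcal E_A(V))$ --- contains the divisors $K_X+\Delta+tA$ relevant to \emph{all} of these MMPs simultaneously, which is precisely why the reduction has to be carried out on one ring rather than polytope by polytope.
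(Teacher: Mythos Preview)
Your first two paragraphs are essentially the paper's proof with cosmetic variations. The paper takes the $B_i$ to be the vertices of the rational polytope $\mathcal C\cap\mathcal E_A(V)$ (rather than of $\mathcal C$), enlarges the cone by adjoining the divisors $K_X+A+B_i+\lambda A$ for a single $\lambda\gg0$ (rather than several ample $H_k$), and handles the spanning condition by first enlarging $V$ and $\mathcal C$. Either route lands in the same place: a finitely generated ring $\mathfrak R=R(X,\mathcal D)$ whose support contains $\R_+(K_X+A+\mathcal C\cap\mathcal E_A(V))$, contains an ample divisor, has $\pi(\Supp\mathfrak R)$ spanning $N^1(X)_\R$, and consists of gen divisors. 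Your observation that each step of the MMP produced by Theorem~\ref{thm:scalingbig} strictly shrinks the index set $I_p\subseteq I_1$ is precisely the paper's punchline, and already gives the uniform bound $M=|I_1|=N$.

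Your third paragraph is superfluous, and this is where you introduce an actual gap. The $D$-MMP of Theorem~\ref{thm:scalingbig} applied to $D=K_X+A+\Delta\in\Supp\mathfrak R$ \emph{is} the MMP asserted in the corollary; there is nothing further to ``match'', and the paper's proof stops immediately after observing that one may take $M=N$. By invoking the scaling variant (which the paper only mentions in passing, without details) and arguing via a path $\{K_X+\Delta+tA\}$ through the chambers, you introduce a dependency the argument does not need. Moreover, the claim that a segment meets at most $M$ of the $\mathcal N_i$ does not by itself bound the number of MMP steps: you would still have to show that each elementary contraction corresponds to exactly one wall-crossing along that segment, which is essentially re-proving the relevant part of Theorem~\ref{thm:scalingbig}. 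Delete paragraph three; once you have extracted the uniform bound from the shrinking index sets in paragraph two, the proof is complete.
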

\begin{proof}
By enlarging $V$ and $\mcal C$, we may assume that the numerical classes of the elements of $\mcal C\cap\mathcal E_A(V)$ span $N^1(X)_\R$. The set $\mcal C\cap\mcal E_A(V)$ is a rational polytope by Corollary \ref{cor:effective}, and let $B_1,\dots,B_r$ be its vertices. Choose a positive integer $\lambda\gg0$ such that all $K_X+A+B_i+\lambda A$ are ample. Denote 
$$\mathcal D=\sum\R_+(K_X+A+B_i)+\sum\R_+(K_X+A+B_i+\lambda A).$$
Then the ring $\mathfrak R=R(X,\mathcal D)$ is finitely generated by Theorem \ref{thmA}, and we have $\R_+(K_X+A+\mcal C\cap\mathcal E_A(V))\subseteq\Supp\mathfrak R$. Let $\Supp\mathfrak R= \coprod_{i=1}^N \mcal{N}_i$ be the decomposition as in Theorem \ref{thm:scalingbig}. Then it is immediate from the proof of Theorem \ref{thm:scalingbig} that we can set $M=N$.
\end{proof}

The following corollary is finiteness of models, cf.\ \cite[Lemma 7.1]{BCHM}.

\begin{cor}\label{cor:lt models}
Let $X$ be a projective $\Q$-factorial variety, let $S_1,\dots,S_p$ be distinct prime divisors on $X$, denote $V=\sum_{i=1}^p\mathbb R S_i\subseteq \Div_{\mathbb R}(X)$, and let $A$ be an ample $\mathbb Q$-divisor on $X$. Let $\mcal C\subseteq\mcal L(V)$ be a rational polytope such that for every $\Delta\in \mcal C$, the pair $(X,\Delta)$ is klt.

Then there are finitely many rational maps $\varphi_i\colon X \dashrightarrow Y_i$, with the property that if $\Delta\in\mcal C\cap\mathcal E_A(V)$, then there is an index $i$ such that $\varphi_i$ is a log terminal model of $K_X +\Delta$.
\end{cor}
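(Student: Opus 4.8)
The plan is to deduce the corollary from Theorem~\ref{thm:scalingbig}, applied to the same finitely generated adjoint ring that already occurs in the proof of Corollary~\ref{cor:stepsMMP}. As there, I would first enlarge $V$ and $\mcal C$ so that we may assume that the numerical classes of the divisors $K_X+A+\Delta$, for $\Delta\in\mcal C\cap\mcal E_A(V)$, span $N^1(X)_\R$; after replacing $A$ by a general $\Q$-divisor $\Q$-linearly equivalent to it, we may also assume $A\geq 0$ and that $(X,\Delta+A)$ is klt for all $\Delta\in\mcal C\cap\mcal E_A(V)$, so that each $K_X+A+\Delta$ is a genuine adjoint divisor. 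By Corollary~\ref{cor:effective} the set $\mcal C\cap\mcal E_A(V)$ is a rational polytope; let $B_1,\dots,B_r$ be its vertices, choose $\lambda\gg0$ with all $K_X+A+B_i+\lambda A$ ample, and set
\[
\mcal D=\sum\R_+(K_X+A+B_i)+\sum\R_+(K_X+A+B_i+\lambda A),\qquad \mathfrak R=R(X,\mcal D).
\]

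Next I would check that $\mathfrak R$ satisfies the hypotheses of Theorem~\ref{thm:scalingbig}. Finite generation of $\mathfrak R$ is Theorem~\ref{thmA}(1), since each generator has the shape $K_X+(\text{klt boundary})+(\text{ample})$; the divisors $K_X+A+B_i+\lambda A$ are ample and effective, hence lie in $\Supp\mathfrak R$, whose image in $N^1(X)_\R$ spans by the normalisation above. The delicate point is that \emph{every} divisor in $\mcal D$ is gen: any $0\neq F\in\mcal D$ can be written $F=c(K_X+\Theta+A')$ with $c>0$, where $\Theta$ is a convex combination of the $B_i$ and hence lies in $\mcal C\cap\mcal E_A(V)\subseteq\mcal C$ (so $(X,\Theta)$ is klt), and $A'$ is a positive multiple of $A$ (so ample); therefore $K_X+\Theta+A'$ is gen because adjoint-plus-ample divisors of this form are gen by the discussion following Definition~\ref{dfn:gen} (which rests on Theorem~\ref{thmA}), and genness is preserved under positive rational rescaling by Lemma~\ref{lem:3}. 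In particular every divisor in the interior of $\Supp\mathfrak R\subseteq\mcal D$ is gen.

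Finally I would invoke Theorem~\ref{thm:scalingbig}: it produces a finite decomposition $\Supp\mathfrak R=\coprod\mcal N_i$ and birational contractions $\vphi_i\colon X\dashto Y_i$ onto $\Q$-factorial projective varieties such that $\vphi_i$ is an optimal model for every divisor in $\mcal N_i$. These finitely many maps $\vphi_i$ are the ones asserted by the corollary. Indeed, for $\Delta\in\mcal C\cap\mcal E_A(V)$ the adjoint divisor $K_X+A+\Delta$ is effective (by the very definition of $\mcal E_A(V)$) and lies in the cone $\mcal D$ (being a convex combination of the $K_X+A+B_i$), so $K_X+A+\Delta\in\Supp\mathfrak R$ and hence belongs to a unique chamber $\mcal N_{i_0}$; then $\vphi_{i_0}$ is an optimal model of $K_X+A+\Delta$, which is precisely a log terminal model of it by Remark~\ref{rem:1}(iv). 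The only genuinely non-formal inputs are Theorem~\ref{thm:scalingbig} itself and the finite generation statement Theorem~\ref{thmA}; within the present argument the step that demands care is the verification that all divisors of $\Supp\mathfrak R$ are gen, which is exactly where the klt-plus-ample shape of the generators of $\mcal D$ is used.
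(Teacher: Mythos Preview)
Your proof is correct and follows essentially the same route as the paper: both apply Theorem~\ref{thm:scalingbig} to a finitely generated adjoint ring whose support contains $K_X+A+\bigl(\mcal C\cap\mcal E_A(V)\bigr)$, after enlarging so that the numerical classes span $N^1(X)_\R$. The paper's version is marginally leaner---rather than adjoining the extra generators $K_X+A+B_i+\lambda A$ as in Corollary~\ref{cor:stepsMMP}, it simply enlarges $\mcal C$ so that some $K_X+A+B$ is already ample, and it leaves the gen hypothesis (your careful check via the klt-plus-ample shape) implicit.
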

\begin{proof}
By enlarging $V$ and $\mcal C$, we may assume that the numerical classes of the elements of $\mcal C\cap\mathcal E_A(V)$ span $N^1(X)_\R$, and that there exists a divisor $B\in\mcal C\cap\mathcal E_A(V)$ such that $K_X+A+B$ is ample. The ring $R(X,\R_+(K_X+A+\mcal C\cap\mathcal E_A(V)))$ is finitely generated by Corollary \ref{cor:effective}, so the result follows immediately from Theorem \ref{thm:scalingbig}.
\end{proof}

Finally, we recover one of the main results of \cite{HK00}.

\begin{cor}\label{cor:MDS}
Let $X$ be a $\Q$-factorial projective variety such that $\Pic(X)_\Q = N^1(X)_\Q$. Then $X$ is a Mori Dream Space if and only if its Cox ring is finitely generated.

In particular, if $(X,\Delta)$ is a klt log Fano pair, then $X$ is a Mori Dream Space.
\end{cor}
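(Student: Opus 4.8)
The forward implication is precisely Corollary~\ref{lem:mds}, so the plan is to concentrate on the converse: assuming the Cox ring of $X$ is finitely generated, verify conditions (1)--(3) of Definition~\ref{dfn:25}. Condition (1) is hypothesised. I would fix \emph{effective} $\Q$-divisors $D_1,\dots,D_r$ whose classes form a basis of $N^1(X)_\Q=\Pic(X)_\Q$ with $\Effb(X)\subseteq\sum\R_+D_i$, set $\mcal C=\sum\R_+D_i$ and $\mathfrak R=R(X,\mcal C)$, and check the hypotheses of Theorem~\ref{thm:scalingbig} for $\mathfrak R$. Finite generation is the assumption; since $X$ is projective it carries an ample $\Q$-divisor $A\in\Effb(X)\subseteq\mcal C$ a multiple of which is effective, so $\Supp\mathfrak R$ contains the big divisor $A$, and then Theorem~\ref{thm:ELMNP}(2) identifies $\Supp\mathfrak R$ with $\Effb(X)$, which contains an ample class and spans $N^1(X)_\R$. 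The remaining point is that every $\Q$-divisor in the interior $\B(X)$ of $\Supp\mathfrak R$ is gen: its section ring is finitely generated, being the ring attached to a ray of the finitely generated ring $\mathfrak R$ (cf.\ Theorem~\ref{thm:ELMNP}(4)), and since $\Pic(X)_\Q=N^1(X)_\Q$ this is exactly example (iii) following Definition~\ref{dfn:gen}.

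Theorem~\ref{thm:scalingbig} then produces a decomposition $\Effb(X)=\coprod\mcal N_i$ with $\Q$-factorial projective varieties $X_i$, birational contractions $\vphi_i\colon X\dashto X_i$ which are optimal models along $\mcal N_i$, and with every element of $(\vphi_i)_*\mcal N_i$ semiample. Condition (2) is then immediate from the second part of Corollary~\ref{cor:5}: $\Nef(X)=\Supp\mathfrak R\cap\pi^{-1}(\Nef(X))$ is rational polyhedral and all of its elements are semiample, so it is spanned by finitely many semiample classes.

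For condition (3) I would restrict to the chambers inside $\overline{\Mov}(X)$: by the first part of Corollary~\ref{cor:5} these form a subcollection $\{\mcal N_i\}_{i\in I_1}$ with $\overline{\Mov}(X)=\bigcup_{i\in I_1}\overline{\mcal N_i}$, and it suffices to treat the full-dimensional ones. The crucial claim is that for such $i$ the map $\vphi_i$ is an isomorphism in codimension $1$. To prove it I would pick a big $\Q$-divisor $D$ in the interior of $\mcal N_i$, note $N_\sigma(D)=0$ since $D\in\overline{\Mov}(X)$ (Lemma~\ref{d_sigma}), and inspect the $D$-MMP producing $\vphi_i$ in the proof of Theorem~\ref{thm:scalingbig}: it is a composition of elementary maps as in Theorem~\ref{lem:nullflip}, each small or a divisorial contraction. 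If the first divisorial step contracts a prime divisor $E$ on an intermediate model $Y_k$, then the proof of Theorem~\ref{lem:nullflip} shows $E\cdot C<0$ for the contracted curves $C$, which span the ray orthogonal to the separating hyperplane; since the strict transform $D_k$ lies strictly on the non-nef side of that hyperplane, also $D_k\cdot C<0$, and as these curves cover $E$ this shows $D_k$ is not movable on $Y_k$ (a movable divisor is nonnegative on a covering family of curves). This contradicts the fact that $D_1=D,\dots,D_k$ are linked by isomorphisms in codimension $1$ (the earlier steps), which preserve movability. Hence $\vphi_i$ is an isomorphism in codimension $1$, and $X_i$ is projective and $\Q$-factorial. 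Transporting the chosen Cox ring data along $\vphi_i$ (an isomorphism in codimension $1$ does not change global sections) shows that $X_i$ has a finitely generated Cox ring, so Corollary~\ref{cor:5} applied to $X_i$ gives that $X_i$ satisfies (2). Finally, $\overline{\mcal N_i}\subseteq\vphi_i^*\Nef(X_i)$ because $(\vphi_i)_*\mcal N_i$ is semiample, while $\vphi_i^*\Nef(X_i)\subseteq\overline{\Mov}(X)$ because the pullback of a nef class under an isomorphism in codimension $1$ is a limit of pullbacks of ample classes, hence movable. Taking the union over $i\in I_1$ yields $\overline{\Mov}(X)=\bigcup_{i\in I_1}\vphi_i^*\Nef(X_i)$, which is condition (3).

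I expect the claim that the optimal model of a big movable divisor is an isomorphism in codimension $1$ to be the only genuine obstacle; the rest is bookkeeping around Theorems~\ref{thm:scalingbig} and~\ref{lem:nullflip} and Corollary~\ref{cor:5}. For the final assertion: if $(X,\Delta)$ is a klt log Fano pair then $-(K_X+\Delta)$ is ample, and after rescaling the $D_i$ (harmless by Lemma~\ref{lem:3}) we may assume each $D_i$ is small enough that, writing $-(K_X+\Delta)\sim_\Q A_i+B_i$ with $A_i$ ample and $B_i\geq0$ a small general ample $\Q$-divisor, the divisor $\Delta_i=D_i+\Delta+B_i$ makes $(X,\Delta_i)$ klt; then $D_i\sim_\Q K_X+\Delta_i+A_i$. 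Thus $R(X;D_1,\dots,D_r)$ is, up to rescaling, the adjoint ring $R(X;K_X+\Delta_1+A_1,\dots,K_X+\Delta_r+A_r)$, which is finitely generated by Theorem~\ref{thmA}(1); hence $X$ is a Mori Dream Space by the first part.
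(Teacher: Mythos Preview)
Your overall strategy matches the paper's: reduce to Theorem~\ref{thm:scalingbig} via Corollary~\ref{cor:5}, then read off conditions (2) and (3) of Definition~\ref{dfn:25} from the resulting decomposition. Two points deserve comment.

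\textbf{The codimension-one claim.} Your MMP-tracing argument is correct, but it is working much harder than necessary. The paper dispatches this in one line: each $\vphi_i$ is an \emph{optimal} model for every $D\in\mcal N_i$, hence in particular $D$-negative. But a $D$-negative map can only contract a prime divisor $E$ with $o_E(D)>0$ (directly from the definition: on a resolution $(p,q)$ one has $p^*D=q^*(\vphi_i)_*D+F$ with $\mult_{\widetilde E}F>0$, and since $(\vphi_i)_*D$ is nef the Negativity lemma forces every $D'\in|D|_\R$ to satisfy $p^*D'\geq F$). Since $D$ is big and lies in $\overline{\Mov}(X)$, all $o_E(D)$ vanish by Lemma~\ref{d_sigma}, so no divisor is contracted. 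This avoids opening up the proof of Theorem~\ref{lem:nullflip}.

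\textbf{The log Fano part.} There are two gaps. First, you never verify that $\Pic(X)_\Q=N^1(X)_\Q$ holds for a klt log Fano pair; the paper does this via Kawamata--Viehweg vanishing and the exponential sequence, and without it you cannot invoke the first part of the corollary. Second, your insistence that the $D_i$ be \emph{effective} is problematic: if $D_1,\dots,D_r$ are effective, form a basis of $N^1(X)_\Q$, and satisfy $\Effb(X)\subseteq\sum\R_+D_i$, then automatically $\sum\R_+D_i\subseteq\Effb(X)$ as well, forcing $\Effb(X)$ to be simplicial --- which already fails for most del Pezzo surfaces. The paper sidesteps this entirely: with arbitrary $D_i$, choose $\varepsilon>0$ small enough that each $A_i:=\varepsilon D_i-(K_X+\Delta)$ is ample, and then $\varepsilon D_i=K_X+\Delta+A_i$, so Theorem~\ref{thmA}(1) applies directly to give finite generation of $R(X;\varepsilon D_1,\dots,\varepsilon D_r)$, hence of the Cox ring by Lemma~\ref{lem:3}. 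No effectivity, no auxiliary $B_i$, no klt check for new boundaries.
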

\begin{proof}
Let $D_1, \dots , D_r$ be a basis of $\Pic(X)_\Q$ such that $\overline{\Eff}(X)\subseteq \sum \R_+D_i$. The associated divisorial ring $\mathfrak R=R(X; D_1, \dots , D_r)$ is a Cox ring of $X$. Corollary \ref{lem:mds} shows that if $X$ is a Mori Dream Space, then $\mathfrak R$ is finitely generated.  We now prove the converse statement.

Assume that $\mathfrak R$ is finitely generated, and let $\Supp \mathfrak R= \coprod_{i=1}^N \mcal{N}_i$ be the decomposition from Theorem \ref{thm:scalingbig}. Then $\Nef(X)$ is the span of finitely many semiample divisors by Corollary \ref{cor:5}, and by the definition of the sets $\mathcal N_i$ and by Corollary \ref{cor:5}, there is a set $I\subseteq\{1,\dots,N\}$ such that $\overline{\Mov}(X)=\bigcup_{i\in I}\overline{\mathcal N_i}$. By taking a smaller index set $I$, we may assume that $\dim \overline{\mcal{N}_i}= \dim N^1(X)_\R$ for all $i\in I$. For $i\in I$, let $\vphi_i\colon X\dashto X_i$ be the maps as in Theorem \ref{thm:scalingbig}. Then $\overline{\mathcal N_i}\subseteq\varphi_i^*\big(\Nef(X_i)\big)$, and hence $\overline{\Mov}(X)\subseteq\bigcup_{i\in I}\varphi_i^*\big(\Nef(X_i)\big)$. Each $\varphi_i$ is an optimal  model for every divisor in $\mcal N_i$, thus each $\vphi_i$ is an isomorphism in codimension $1$. Therefore, the ring $R(X_i;(\vphi_i)_*D_1,\dots,(\vphi_i)_*D_r)$ is a Cox ring of $X_i$, and it is finitely generated since it is isomorphic to $\mathfrak R$. In particular, every $\Nef(X_i)$ is spanned by finitely many semiample divisors by above, and hence $\overline{\Mov}(X)\supseteq\bigcup_{i\in I}\varphi_i^*\big(\Nef(X_i)\big)$. This shows that $X$ is a Mori Dream Space.

Now, if $(X, \Delta)$ is a klt log Fano pair, then $H^i(X, \OO_X)= 0$ for all $i>0$ by Kawamata-Viehweg vanishing. The long exact sequence in cohomology associated to the exponential sequence
\[
0 \longrightarrow \Z \longrightarrow \OO_X \longrightarrow \OO^*_X \longrightarrow 0
\]
shows that $\Pic(X)_\Q= N^1(X)_{\Q}$. Let $D_1,\dots,D_r$ be a basis of $\Pic(X)_\Q$ such that $\overline{\Eff}(X)\subseteq \sum \R_+D_i$, and pick a rational number $0<\varepsilon\ll1$ such that $A_i=\varepsilon D_i-(K_X+\Delta)$ is ample for every $i$. Then the ring $R(X; \varepsilon D_1, \dots,\varepsilon D_r)=R(X; K_X+\Delta+A_1, \dots, K_X+\Delta+A_r)$ is finitely generated by Theorem \ref{thmA}, and hence the Cox ring $R(X; D_1, \dots, D_r)$ is finitely generated by Lemma \ref{lem:3}.  
\end{proof}

\bibliographystyle{amsalpha}

\bibliography{biblio}
\end{document}